%% file: paper2_ver.3.tex
\documentclass[11pt,reqno]{amsart}
\input{source/preamble}

\begin{document}
\input{source/abstract}

\input{source/Introduction.tex}
\input{source/Preliminary.tex}
\input{source/Period_Map_and_Period_Domain.tex}
\input{source/HK_Quot_and_Surj_of_Period_Map.tex}
\input{source/Examples.tex}
\input{source/Future_Directions.tex}
\input{source/Acknowledgments.tex}


\bibliography{source/references}
\bibliographystyle{jabbrv}

\end{document}

%% file: source/preamble.tex
\usepackage[T1]{fontenc}
  \usepackage[utf8]{inputenc}
\usepackage{geometry}
\usepackage{etoolbox}
\expandafter\preto\csname section\endcsname{\par\vspace{0.5\baselineskip}}
\expandafter\preto\csname subsection\endcsname{\par\vspace{0.3\baselineskip}}
\expandafter\preto\csname subsubsection\endcsname{\par\vspace{0.2\baselineskip}}

\numberwithin{equation}{section}

\usepackage{amsmath, amssymb, amsfonts, mathrsfs}
\usepackage{mathtools}
\usepackage{enumitem}

\usepackage{graphicx}
\usepackage[all]{xy}
\usepackage{extarrows}
\usepackage{color}

\usepackage{caption}
\usepackage{comment}

\usepackage{tabularx} 
\usepackage{booktabs}

\usepackage{hyperref}
\usepackage{bookmark}
\hypersetup{
    unicode            = true,
    bookmarksnumbered  = true,
    colorlinks         = false,
    hidelinks,
    final
}
\usepackage{xurl}
\usepackage{placeins}

\theoremstyle{plain}
\newtheorem{theorem}{Theorem}[section]
\newtheorem{lemma}[theorem]{Lemma}
\newtheorem{proposition}[theorem]{Proposition}
\newtheorem{corollary}[theorem]{Corollary}

\theoremstyle{definition}
\newtheorem{definition}[theorem]{Definition}
\newtheorem{example}[theorem]{Example}

\newtheorem{setup}[theorem]{Setup}
\newtheorem{assumption}[theorem]{Assumption}

\newtheorem{question}[theorem]{Question}
\newtheorem{problem}[theorem]{Problem}

\theoremstyle{remark}
\newtheorem{remark}[theorem]{Remark}



%% file: source/abstract.tex

\title{
A Torelli-type theorem for hyperk\"{a}hler quotients
}

\author{Ryota Kotani}
\address{Department of Mathematics, Institute of Science Tokyo}
\email{kotani.r.597b@m.isct.ac.jp, kotani.r.337@gmail.com}

\date{\today}

\begin{abstract}
In this paper, we investigate the moduli space of algebraic asymptotic hyperkähler structures on hyperkähler quotients arising from a quaternionic vector space $\mathbb{H}^n$.
We consider the period map on this moduli space, and prove a Torelli-type theorem (i.e., the bijectivity of the period map) assuming the surjectivity of the Kirwan map.
This work provides an algebro-geometric generalization of the Torelli-type theorem for ALE gravitational instantons by Kronheimer \cite{Kronheimer89_HK_quot, Kronheimer89_Torelli-type}.
In particular, this theorem applies to toric hyperkähler varieties and Nakajima quiver varieties.
\end{abstract}

\maketitle

%% file: source/Introduction.tex
\section{Introduction}

In various fields of mathematics and theoretical physics, hyperkähler metrics asymptotic to a hyperkähler cone appear naturally as geometric objects linking moduli theory, representation theory, and algebraic geometry, and significant progress has been made in the systematic research on their construction and classification. Typical examples of such metrics are constructed via hyperkähler quotients, including ALE gravitational instantons, toric hyperkähler varieties, and Nakajima quiver varieties.
\medskip

Regarding the classification of asymptotic hyperkähler metrics, the Torelli-type theorem for ALE gravitational instantons by Kronheimer \cite{Kronheimer89_HK_quot, Kronheimer89_Torelli-type} is well known. Here, a Torelli-type theorem asserts that for any triple of periods in the period domain, there exists a unique asymptotic hyperkähler structure realizing it (as will be explained later).
\medskip

Building on our previous work \cite{Kotani_26_PTM}, we first recall the basic setup for the general classification problem. Let $X$ be a conical symplectic variety (cf.~Def.~\ref{def: coni symp}), and assume that an algebraic (cf.~Remark~\ref{remark: twistor spaces}) hyperkähler cone metric $g_0$ (cf.~Def.~\ref{def:cone met on coni symp vrt}) is given on its regular locus $X_{\mathrm{reg}}$. Given any projective crepant resolution $Y$ of $X$, we define the moduli space $\mathcal{M}$ of algebraic  hyperkähler structures on $Y$ asymptotic to the metric $g_0$ at infinity as follows:
\[
    \mathcal{M} \coloneqq \left\{(Y,g,I,J,K) \mid \text{$g$ is algebraic and asymptotic to $g_0$}\right\}/(\text{isomorphism}).
\]
Here, we say that the metric $g$ is asymptotic to $g_0$ if the difference between the metrics $\|g-g_0\|_{g_0}$ on $X_{\mathrm{reg}} \subset Y$ converges to $0$ at infinity of $Y$ (cf.~Def.~\ref{def:asymp to cone metric}).

\medskip
By applying the theory of the principal twistor model (PTM) constructed in our previous paper \cite{Kotani_26_PTM}, we deduce the following injectivity proposition on the moduli space.

\begin{proposition}[{Prop.~\ref{prop:inj of period map}}]\label{intro prop:inj of period map}
 The period map
 \[
 p \colon \mathcal{M} \to H^2(Y;\mathbb{R})\otimes\mathbb{R}^3,
 \]
 sending each algebraic asymptotic hyperkähler structure $(Y,g,I,J,K) \in \mathcal{M}$ to the triple $([\omega_I],[\omega_J],[\omega_K])$ of periods (i.e., cohomology classes) of the associated Kähler forms, is well-defined and injective.
\end{proposition}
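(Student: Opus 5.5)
The plan is to reduce both well-definedness and injectivity to the classification of algebraic asymptotic hyperkähler structures by their twistor data, as provided by the principal twistor model of \cite{Kotani_26_PTM}.

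\textbf{Well-definedness.} Isomorphisms in $\mathcal{M}$ act trivially on $H^2(Y;\mathbb{R})$, so the triple of periods depends only on the isomorphism class.
\begin{itemize}
\item An isomorphism is a biholomorphism $\phi\colon Y\to Y$ intertwining $(g,I,J,K)$ and compatible with the asymptotic identification with $X_{\mathrm{reg}}$.
\item Because $\phi$ is compatible with the cone at infinity, it covers the identity (or a cone automorphism isotopic to it) of $X$.
\item Since $Y\to X$ is a projective crepant resolution, $H^2(Y)$ is spanned by classes of exceptional curves and divisors. These are permuted trivially by an automorphism over $\mathrm{id}_X$ in the connected component.
\item The subtle point is whether $\phi$ must lie over the identity of $X$. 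I would build this into the definition of isomorphism as in \cite{Kotani_26_PTM}.
\end{itemize}

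\textbf{Injectivity, step 1: twistor families.} Each $(Y,g,I,J,K)\in\mathcal{M}$ has a twistor space $Z\to\mathbb{P}^1$, which is algebraic by assumption. The fibre over $\zeta\in\mathbb{P}^1$ is $(Y,I_\zeta)$ with holomorphic symplectic form $\omega_\zeta=(\omega_J+i\omega_K)+2\zeta\,\omega_I-\zeta^2(\omega_J-i\omega_K)$. The principal twistor model identifies $Z$ with a fibrewise deformation of the resolution $Y\to X$ over the twistor line. By \cite{Kotani_26_PTM}, this deformation is classified by a map $\mathbb{P}^1\to H^2(Y;\mathbb{C})\otimes\mathcal{O}(2)$, namely the Namikawa/universal Poisson deformation period $\zeta\mapsto[\omega_\zeta]$. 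This map is determined by the triple $([\omega_I],[\omega_J],[\omega_K])$.

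\textbf{Injectivity, step 2: isomorphic twistor spaces.} Suppose two structures have the same periods. By universality of the Poisson deformation of $(Y,\omega)$ over $H^2(Y;\mathbb{C})$ (Namikawa) and its equivariance for the conical action, the two twistor spaces are isomorphic as holomorphic fibrations over $\mathbb{P}^1$. The isomorphism preserves:
\begin{itemize}
\item the $\mathcal{O}(2)$-twisted fibrewise symplectic form;
\item the real structure, since the periods are real triples and the antiholomorphic involution is determined by the period map;
\item the behaviour at infinity, namely the compactification/cone structure coming from $g_0$.
\end{itemize}

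\textbf{Injectivity, step 3: recovering the metric.} Apply the Hitchin--Karlhede--Lindström--Roček correspondence in its asymptotic form. The hyperkähler metric is recovered from the family of real twistor lines with normal bundle $\mathcal{O}(1)^{\oplus 2n}$. We must also show that the relevant component of real twistor lines is the one asymptotic to the cone lines. Then the identification of twistor spaces descends to a triholomorphic isometry $Y\to Y$, which gives equality in $\mathcal{M}$.

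\textbf{Main obstacle.} The hard step is the last one. There may be several components or families of real sections, and one must ensure that the twistor-space isomorphism preserves the specific family of lines coming from $g$. I would first try uniqueness of real sections asymptotic to those of $g_0$. This uses the decay condition $\|g-g_0\|_{g_0}\to 0$ together with a maximum-principle or rigidity argument ($H^1(\mathbb{P}^1,N(-2))=0$ deformation theory, extended over the noncompact ends). I expect this to be exactly what the PTM result of \cite{Kotani_26_PTM} supplies, and I would cite it there.
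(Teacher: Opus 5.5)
Your proposal is correct and follows essentially the paper's route. The paper observes that the period section $s=[\omega_J+i\omega_K]+2u[\omega_I]-u^2[\omega_J-i\omega_K]$ of the twistor form is exactly the image of $m$ under the PTM map $\Psi\colon\mathcal{M}\to H^0(\mathbb{P}^1,\mathcal{C}(2))^{\sigma_2}$. Hence $p$ is $\Psi$ followed by the linear bijection between real sections and triples, and both well-definedness and injectivity are inherited from \cite[Theorem~3.40, Cor.~4.12]{Kotani_26_PTM}. That result packages your Steps 2--3, including the twistor-line uniqueness you flag as the main obstacle, and the uniqueness of $s$ there already makes $\Psi$ well-defined on isomorphism classes. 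So your separate argument that isomorphisms cover $\mathrm{id}_X$ and act trivially on $H^2$ is not needed.
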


\begin{remark}
  This injectivity implies that imposing the asymptotic condition ensures that the triple of periods uniquely determines the asymptotic hyperkähler metric.
\end{remark}

To describe the image of this period map (the set of realizable periods), we introduce the period domain $\Omega$, which is naturally determined by the universal Poisson deformation. Let $D$ be the wall on $H^2(Y;\mathbb{C})$ arising from the discriminant locus of the universal Poisson deformation of $Y$ (cf.~Def.~\ref{def:wall}), and let its real part be $D_\mathbb{R}\subset H^2(Y;\mathbb{R})$. Then, we define the period domain as $\Omega \coloneqq (D_\mathbb{R}\otimes\mathbb{R}^3)^c \subset H^2(Y;\mathbb{R})\otimes\mathbb{R}^3$ (cf.~Def.~\ref{def:period domain}).
By the non-degeneracy of the metrics, the following proposition holds.

\begin{proposition}[{Prop.~\ref{prop:inclusion to period domain}}]\label{intro prop:inclusion}
 The image of the period map $p$ is contained in the period domain $\Omega$. That is, $p(\mathcal{M}) \subset \Omega$ holds.
\end{proposition}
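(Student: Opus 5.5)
To show $p(\mathcal{M})\subset\Omega$, I will argue by contradiction. Fix $(Y,g,I,J,K)\in\mathcal{M}$ and suppose its period triple $([\omega_I],[\omega_J],[\omega_K])$ lies in $D_\mathbb{R}\otimes\mathbb{R}^3$. Concretely, this means there is a single real hyperplane component $H\subset D_\mathbb{R}$ of the wall (cf.~Def.~\ref{def:wall}) containing all three classes. Since the discriminant locus of the universal Poisson deformation is a finite union of hyperplanes defined over $\mathbb{Q}$, I can write $H = \alpha^\perp$ for an integral class $\alpha\in H_2(Y;\mathbb{Z})$. The plan is to produce a compact complex curve in some complex structure of the twistor family whose fundamental class is a nonzero multiple of $\alpha$. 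Such a curve would be annihilated by a Kähler form, which is impossible.

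\textbf{Step 1 (rotate to the wall).} Rotate the hyperkähler structure by $SO(3)$. For $u=(a,b,c)\in S^2$, the complex structure $I_u=aI+bJ+cK$ has Kähler form $\omega_u=a\omega_I+b\omega_J+c\omega_K$ and holomorphic symplectic form $\sigma_u$. The periods of all these forms lie in $H\otimes\mathbb{C}$. For each $u$, the complex manifold $(Y,I_u)$ is a member of the twistor family. By the algebraicity assumption and the principal twistor model of \cite{Kotani_26_PTM}, this family is identified with the pullback of the universal Poisson deformation $\mathcal{Y}\to H^2(Y;\mathbb{C})$ along the period $[\sigma_u]\in H^2(Y;\mathbb{C})$. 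I expect this to be stated in the preliminaries; it is the same input used for Prop.~\ref{prop:inj of period map}.

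\textbf{Step 2 (nontrivial fiber).} Since $[\sigma_u]\in D$, the fiber $\mathcal{Y}_{[\sigma_u]}$ lies over the discriminant locus. There the Poisson deformation $\mathcal{X}_{[\sigma_u]}$ of $X$ is singular, and the simultaneous resolution $\mathcal{Y}_{[\sigma_u]}\to\mathcal{X}_{[\sigma_u]}$ is not an isomorphism. It therefore contracts a compact curve $C$. Using the description of the wall via Namikawa's theory (the hyperplanes are orthogonal to classes of contracted curves), I will arrange that $[C]$ is a positive multiple of $\alpha$ and that $\int_C\sigma_u=0$.

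\textbf{Step 3 (contradiction).} Now $C$ is an $I_u$-holomorphic compact curve, so $\int_C\omega_u>0$ because $\omega_u$ is a Kähler form for $I_u$. On the other hand, $[\omega_u]\in H$, so $\int_C\omega_u=\langle[\omega_u],\alpha\rangle=0$. This contradiction shows that the triple avoids $D_\mathbb{R}\otimes\mathbb{R}^3$.

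\textbf{Main obstacle.} The hard step is Step 2, together with the identification in Step 1. I need to know two things: that the complex structure $I_u$ is exactly the fiber of the universal Poisson deformation over $[\sigma_u]$, with the correct normalization of the period; and that each wall hyperplane is precisely the orthogonal complement of an effective curve class realized in that fiber. A weaker substitute would already suffice: a nonzero class $\alpha\in H_2$ represented by an $I_u$-holomorphic cycle and orthogonal to $H$. To get this, I would first try Namikawa's description of $D$ as the union of the hyperplanes $\ker(\cdot\,\alpha)$ for $\alpha$ ranging over classes of curves contracted by $Y\to X$, i.e.\ curves in the central fiber. I would then deform such a curve horizontally along the wall, using the fact that its class stays of type $(1,1)$ exactly on that hyperplane.
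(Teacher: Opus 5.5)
Your strategy matches the paper's. You exhibit a compact curve, holomorphic for one complex structure of the hyperk\"ahler family, whose class is killed by the corresponding K\"ahler class, and then contradict positivity of area. But Step~2 is left open, and the route you set up for it cannot close it on its own.

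Knowing only that $[\sigma_u]\in D$, i.e.\ that $\mathcal{Y}_{[\sigma_u]}$ is non-affine, gives you \emph{some} compact curve $C$. The vanishing $\int_C\sigma_u=0$ is automatic for any holomorphic curve and says nothing about $[C]$. If $[\sigma_u]$ lies on several walls, $C$ may have class normal to a wall not containing $[\omega_u]$, and then $\int_C\omega_u\neq 0$.

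What you need is a class-specific statement: for $c\in H_\alpha$, the class $[\Sigma_\alpha]$ (or a nonzero multiple) is represented by a holomorphic cycle in $\mathcal{Y}_c$. The paper takes exactly this from the intermediate claim in its proof of $D\subset D'$ in Prop.~\ref{prop:meaning of wall}. It applies that claim directly at $c=[\omega_J+i\omega_K]$ with $C=\Sigma_\alpha$, so $\int_{\Sigma_\alpha}\omega_I=0$ gives the contradiction. Your fallback sketch is this same argument. Note that the $SO(3)$ rotation and the rationality of the walls are unnecessary, since $H_\alpha=[\Sigma_\alpha]^\perp$ by definition.

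Be aware that the justification for this step, in the paper and in your fallback alike, is only that $[\Sigma_\alpha]$ stays orthogonal to $\Omega_c$ and $\bar\Omega_c$ (``of type $(1,1)$'') along $H_\alpha$. That alone does not produce a holomorphic representative. Take the minimal resolution of $\mathbb{C}^2/\mathbb{Z}_4$ with exceptional chain $C_1\cup C_2\cup C_3$. The class $[C_1]+[C_3]$ of a contracted curve is of this type on its whole orthogonal hyperplane. Yet the fibres over generic points of that hyperplane are affine, because $[C_1]+[C_3]$ is not a root of $A_3$. The same example shows that not every contracted curve class cuts out a wall, so the description of $D$ you attribute to Namikawa is not right as stated.

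What actually makes the argument work is that $H_\alpha$ is a genuine component of the discriminant locus. On top of that, you need an argument that a class proportional to $\alpha$ is effective over \emph{every} point of $H_\alpha$, not just over generic ones. That could come, for instance, from a limit of curves coming from generic points of $H_\alpha$, or from Namikawa's local description of the family along $H_\alpha$. This is the one substantive point that the Hodge-type reasoning, as written in either text, does not settle.
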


Consequently, to understand the moduli space, it remains to prove the surjectivity of the period map. Namely, this addresses the question: for any triple of periods in the period domain, does an asymptotic hyperkähler structure realizing it actually exist? We show that this surjectivity holds in the algebro-geometric setting for conical symplectic varieties constructed via hyperkähler quotients. The main theorem of this paper is the following Torelli-type theorem.

\begin{theorem}[{Theorem~\ref{main thm:Torelli-type thm for HK quot}}]\label{intro main thm}
  Let $G \subset Sp(n)$ be a compact Lie group acting linearly on $M\coloneqq \mathbb{H}^n$, and let $\mu \colon M \to \mathfrak{g}^* \otimes \mathbb{R}^3$ be the standard hyperkähler moment map associated with this action (cf.~Setup~\ref{setup:HK quot}).
  Let $X_0 \coloneqq \mu^{-1}(0,0,0)/G$ be the central quotient, and let $g_0$ be the natural hyperkähler cone metric on its regular locus $(X_0)_{\mathrm{reg}}$.
  Assume the following two conditions:
  \begin{enumerate}
      \item The stable locus $\mu^{-1}_\mathbb{C}(0)^s$ is non-empty.
      \item There exists a generic lattice point $\beta\in\mathfrak{z}^*_\mathbb{Z}$ such that the group $G_\mathbb{C}$ acts freely on the stable locus $M^{\beta-s}$, and the Kirwan map $\kappa_{\beta}$ is surjective.
  \end{enumerate}

  Then, the hyperkähler quotient $Y_{\beta}\coloneqq\mu^{-1}(\beta,0,0)/G \simeq\mu_\mathbb{C}^{-1}(0)/\!/_{\beta} G_\mathbb{C}$ is a projective crepant resolution of $X_0$.
  Furthermore, for the moduli space $\mathcal{M}$ of algebraic hyperkähler structures on $Y_{\beta}$ asymptotic to $g_0$ at infinity, the period map $p \colon \mathcal{M} \to \Omega$ is bijective.
\end{theorem}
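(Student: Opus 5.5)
Since injectivity (Prop.~\ref{intro prop:inj of period map}) and the inclusion $p(\mathcal{M})\subset\Omega$ (Prop.~\ref{intro prop:inclusion}) are already available, the plan reduces to two tasks: show that $Y_\beta$ is a projective crepant resolution of $X_0$, and show that every point of $\Omega$ is realized by a hyperkähler quotient metric.

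\textbf{Step 1: $Y_\beta$ is a projective crepant resolution.} By the Kempf--Ness/Kirwan correspondence, $\mu^{-1}(\beta,0,0)/G\simeq \mu_\mathbb{C}^{-1}(0)/\!/_\beta G_\mathbb{C}$. The GIT quotient is projective over the affine quotient $\mu_\mathbb{C}^{-1}(0)/\!/G_\mathbb{C}\simeq X_0$. Freeness of $G_\mathbb{C}$ on $M^{\beta-s}$ implies that $\mu_\mathbb{C}$ is submersive along $\mu_\mathbb{C}^{-1}(0)^{\beta-s}$, so $Y_\beta$ is smooth and holomorphic symplectic by symplectic reduction. Hence $Y_\beta\to X_0$ is crepant. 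Birationality follows from condition (1): the stable locus of $\mu_\mathbb{C}^{-1}(0)$ is dense, and over it the map is an isomorphism.

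\textbf{Step 2: the family of quotient metrics.} For $\zeta=(\zeta_1,\zeta_2,\zeta_3)\in\mathfrak{z}^*\otimes\mathbb{R}^3$, put $Y_\zeta=\mu^{-1}(\zeta)/G$.
\begin{itemize}
\item For generic $\zeta$ (avoiding the hyperplane walls where some stabilizer jumps), the variety $Y_\zeta$ is smooth and carries the quotient hyperkähler metric $g_\zeta$.
\item Scaling $\mu^{-1}(t\zeta)\cong t^{1/2}\cdot\mu^{-1}(\zeta)$, combined with the cone structure of $M$, shows that $g_\zeta$ is asymptotic to $g_0$; the decay rate comes from comparing $\mu^{-1}(\zeta)$ with $\mu^{-1}(0)$ far from the origin.
\item Algebraicity holds because the twistor space of $Y_\zeta$ is itself a quotient of the twistor space of $\mathbb{H}^n$, hence algebraic.
\item The identification $Y_\zeta\cong Y_\beta$ as $C^\infty$ manifolds comes from the hyperkähler rotation/Kronheimer diffeomorphism along paths in the complement of the walls. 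This complement is connected because the walls have real codimension $3$.
\end{itemize}

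\textbf{Step 3: the periods equal the Kirwan images.} By the Duistermaat--Heckman/Kronheimer computation,
\[
[\omega_i(\zeta)]=\kappa_\beta(\zeta_i)\qquad (i=1,2,3),
\]
where $\kappa_\beta\colon\mathfrak{z}^*\to H^2(Y_\beta;\mathbb{R})$ sends a character to the Chern class of the associated line bundle, up to a $2\pi$ normalization.

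\textbf{Step 4: matching walls and concluding.}
\begin{itemize}
\item By condition (2), $\kappa_\beta$ is surjective. I expect it to be an isomorphism $\mathfrak{z}^*\cong H^2(Y_\beta;\mathbb{R})$, using the dimension count $b_2(Y_\beta)=\dim\mathfrak z$. The count comes from Namikawa's identification: the universal Poisson deformation is $\mu_\mathbb{C}^{-1}(\mathfrak z^*_\mathbb{C})/\!/_\beta G_\mathbb{C}$ over $\mathfrak z^*_\mathbb{C}$, once $\kappa_\beta$ is surjective. This is where the surjectivity hypothesis enters, giving universality of the family.
\item Under this identification, the discriminant $D$ of the universal Poisson deformation corresponds to the hyperplanes of $\mathfrak z^*_\mathbb{C}$ where $\mu_\mathbb{C}^{-1}(\cdot)/\!/G_\mathbb{C}$ fails to be smooth affine. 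Its real part $D_\mathbb{R}$ then matches the GIT walls in $\mathfrak z^*$.
\item Consequently $\zeta\notin D_\mathbb{R}\otimes\mathbb{R}^3$ exactly when $Y_\zeta$ is smooth with a free $G$-action. The family then realizes every point of $\Omega$, giving surjectivity; injectivity is Prop.~\ref{intro prop:inj of period map}.
\end{itemize}

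\textbf{Main obstacle.} I expect Step 4 to be the hardest: identifying the quotient family with the universal Poisson deformation, and hence $D_\mathbb{R}$ with the walls in $\mathfrak z^*$. I would first try to prove universality by showing that the Kodaira--Spencer map of the family $\mu_\mathbb{C}^{-1}(\lambda)/\!/_\beta G_\mathbb{C}$ over $\lambda\in\mathfrak z^*_\mathbb{C}$ equals $\kappa_\beta\otimes\mathbb{C}$, which is an isomorphism. Namikawa's theorem then gives universality, and the walls correspond to the loci where Levi subgroups fix nonzero points.
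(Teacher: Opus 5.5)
Your outline follows the paper's architecture. Step~1 is Corollary~\ref{cor:crepant_resol_Y_beta}, Step~3 is the identity $\hat p=\kappa_{\beta_0,\mathbb{R}}\otimes\mathbb{R}^3$, and Step~4 is the relation between $D'_\mathbb{R}$ and $D_\mathbb{R}$. However, Step~4 as you set it up rests on a claim the hypotheses do not give.

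\textbf{The gap.} You assert that $\mu_\mathbb{C}^{-1}(\mathfrak z^*_\mathbb{C})/\!/_\beta G_\mathbb{C}\to\mathfrak z^*_\mathbb{C}$ \emph{is} Namikawa's universal Poisson deformation ``once $\kappa_\beta$ is surjective''. From this you deduce $b_2(Y_\beta)=\dim\mathfrak z$, i.e.\ that $\kappa_\beta$ is an isomorphism. But surjectivity of the period map does not give universality: a Poisson deformation is universal exactly when its period map is an isomorphism (Remark~\ref{remark:period map for Poisson deformation}(2)). Conditions (1)--(2) only give surjectivity of $\kappa_\beta$. Your fallback plan (show the Kodaira--Spencer map equals $\kappa_\beta\otimes\mathbb{C}$, ``which is an isomorphism'') is circular for the same reason. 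So your identification of the wall $D\subset H^2(Y_\beta;\mathbb{C})$ with walls in $\mathfrak z^*_\mathbb{C}$ is unsupported as written. So is your tacit identification of $\mathfrak z^*\otimes\mathbb{R}^3$ with $H^2(Y_\beta;\mathbb{R})\otimes\mathbb{R}^3$.

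\textbf{The repair.} Use universality of Namikawa's family rather than of the quotient family; this is what the paper does. The quotient family is the pullback of $\mathcal{Y}\to\mathcal{C}\simeq H^2(Y_\beta;\mathbb{C})$ along its own period map, which is $\kappa_\beta$ (Proposition~\ref{prop:moment map and PD}). Hence the discriminant pulls back: $D'=\kappa_\beta^{-1}(D)$, so $D'_\mathbb{R}=\kappa_{\beta,\mathbb{R}}^{-1}(D_\mathbb{R})$. Then for $\xi\in\Omega$, \emph{any} $\zeta$ with $(\kappa_{\beta,\mathbb{R}}\otimes\mathbb{R}^3)(\zeta)=\xi$ lies off $D'_\mathbb{R}\otimes\mathbb{R}^3$. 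Only surjectivity is used, and a kernel of $\kappa_\beta$ is harmless.

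\textbf{Comparison of routes after the fix.} Your Step~3 (the Chern--Weil/Kronheimer computation of $[\omega_i]$ as $\langle\zeta_i,[F_\theta]\rangle$ up to normalization, on $\mu^{-1}(\zeta)\to X_\zeta$) handles all real $\zeta$ at once. It is more direct than the paper's detour through rational points, Losev's algebraic Duistermaat--Heckman theorem, and density.

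Your Step~2 diffeomorphism, by transport over the complement of the codimension-$3$ walls, needs more than you state:
\begin{enumerate}
\item The family $\mu^{-1}(\Omega')/G\to\Omega'$ is not proper, so Ehresmann does not apply off the shelf.
\item For the scaling argument to give asymptoticity along the fixed embedding $(X_0)_{\mathrm{reg}}\subset Y_\beta$, you need $r^*\circ\Phi_\zeta=\Phi_{r^{-2}\zeta}\circ r^*$ and $\Phi_\zeta\to\mathrm{id}$ on $(X_0)_{\mathrm{reg}}$ as $\zeta\to0$. The paper builds $\Phi_\zeta$ from $\pi_K$, $\pi_J$ and a hyperk\"ahler rotation precisely to secure this (Proposition~\ref{prop:preserveness of asymp HK met}).
\item ``Walls where some stabilizer jumps'' need not be hyperplanes in $\mathfrak z^*\otimes\mathbb{R}^3$, since stabilizers in $[G,G]$ impose no condition on $\zeta$. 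The paper instead gets smoothness off the wall from $D'$, via King's theorem and hyperk\"ahler rotation.
\end{enumerate}
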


\begin{remark}\label{intro_remark:meaning of assumption}
  The condition $\mu_\mathbb{C}^{-1}(0)^s \neq \emptyset$ provides a sufficient condition for a natural map $Y_{\beta}\to X_0$ to be birational by a standard GIT argument, while the free action of $G_\mathbb{C}$ ensures that $Y_{\beta}$ is a non-singular variety. Moreover, the surjectivity of the Kirwan map $\kappa_{\beta}$ is a condition to guarantee the surjectivity of the period map $p$. For a precise summary of these roles, see Remark~\ref{rem:roles_of_assumptions}.
\end{remark}

To clarify the relationship with prior work, Table~\ref{tab:comparison} compares the setting of this paper with the foundational results of Kronheimer \cite{Kronheimer89_HK_quot, Kronheimer89_Torelli-type} on the moduli of ALE gravitational instantons. In this table, ``Uniqueness'' and ``Surjectivity'' refer to the corresponding properties of the period map $p$ in establishing the Torelli-type theorem.

\begin{table}[htbp]
  \centering
  \caption{Comparison with prior work}
  \label{tab:comparison}
  \begin{tabularx}{\textwidth}{@{} p{2.2cm} X X @{}}
    \toprule
     & Kronheimer (1989) \cite{Kronheimer89_HK_quot, Kronheimer89_Torelli-type} & This paper \\
    \midrule
    Underlying spaces & Quotient space $X=\mathbb{C}^2/\Gamma \ (\Gamma \subset SU(2))$ and its minimal resolution $Y$ & Central quotient $X_0$ and its projective crepant resolution $Y_{\beta}$ constructed via hyperkähler quotients \\
    \addlinespace
    Asymptotic condition & ALE condition (higher-order asymptotic conditions including covariant derivatives of the metric) & Weak asymptotic condition (the difference between the metrics $\|g-g_0\|_{g_0}$ converges to $0$ at infinity of $Y$) \\
    \addlinespace
    Metrics & ALE hyperkähler metrics on $Y$ & Algebraic asymptotic hyperkähler metrics on $Y_{\beta}$ \\
    \addlinespace
    Uniqueness & Compactification of twistor spaces \cite{Kronheimer89_Torelli-type} & Universality of the principal twistor model \cite{Kotani_26_PTM} \\
    \addlinespace
    Surjectivity & Construction of hyperkähler quotients and identification of periods via deformations \cite{Kronheimer89_HK_quot} & Construction of hyperkähler quotients and identification of periods via Poisson deformations \\
    \bottomrule
  \end{tabularx}
\end{table}

As shown in Table~\ref{tab:comparison}, Kronheimer \cite{Kronheimer89_HK_quot,Kronheimer89_Torelli-type} proved the Torelli-type theorem for ALE gravitational instantons under an analytic setting. In contrast, this work establishes a broader framework for algebraic asymptotic hyperkähler metrics. Crucially, this algebro-geometric setting inherently encompasses the ALE gravitational instantons (as we will see in \S\ref{subsec:ALE-grav inst}). In this sense, the results of this paper can be regarded as a natural algebro-geometric generalization of Kronheimer's foundational work. Furthermore, this generalized Torelli-type theorem applies to a much wider class of algebraic varieties, including toric hyperkähler varieties and Nakajima quiver varieties, provided they satisfy the assumptions of Theorem~\ref{intro main thm} (cf.~\S\ref{sec:examples}).

\newpage
The paper is organized as follows:\medskip

In \S \ref{sec:preliminaries}, we prepare the geometric concepts for the subsequent sections. First, we recall conical symplectic varieties and their Poisson deformations, and in particular, we characterize the wall of the universal Poisson deformation. Next, we recall the basic setup for hyperkähler metrics asymptotic to a cone metric at infinity, and overview the theory of the principal twistor model \cite{Kotani_26_PTM}, which controls the moduli space of algebraic asymptotic hyperkähler structures.

In \S \ref{sec:period map and period domain}, we formulate the period map for the moduli space of asymptotic hyperkähler structures in a general setting. We explain that the injectivity of the period map is derived from the universality theorem for the principal twistor model (Proposition~\ref{intro prop:inj of period map}), and further prove that the image of the period map is contained in the period domain (Proposition~\ref{intro prop:inclusion}) by the characterization of the wall for the universal Poisson deformation.

In \S \ref{sec:HK quot and surj of the period map}, focusing on hyperkähler quotients, we prove the surjectivity of the period map, which is the main theorem (Theorem~\ref{intro main thm}). Specifically, we prepare an algebro-geometric description using GIT quotients via the Kempf--Ness type theorem for affine varieties \cite{King94}, and we clarify the relationship between hyperkähler quotients and Poisson deformations. We then analyze the asymptotic behavior of the metrics on the hyperkähler quotient, and prove the surjectivity by utilizing the correspondence between the periods of the Poisson deformation and those of the hyperkähler quotient.

In \S \ref{sec:examples}, we introduce toric hyperkähler varieties $X_\zeta(A)$ determined by a coloop-free unimodular matrix $A$, and Nakajima quiver varieties $X_\zeta(v,w)$ whose pair of dimension vectors $(v,w)$ forms a strict Schur root, as examples to which the main theorem is applicable. We also revisit the example of ALE gravitational instantons studied by \cite{Kronheimer89_HK_quot} to see how it fits into our framework.

Finally, we conclude the paper in \S \ref{sec:future directions} by discussing future directions, including the extension of the Torelli-type theorem to more general spaces.

%% file: source/Preliminary.tex
\section{Preliminaries} \label{sec:preliminaries}

In this section, we prepare the geometric concepts for the subsequent sections. First, we recall conical symplectic varieties and their Poisson deformations, and in particular, we characterize the wall of the universal Poisson deformation. Next, we recall the basic setup for hyperkähler metrics asymptotic to a cone metric at infinity, and overview the theory of the principal twistor model \cite{Kotani_26_PTM}, which controls the moduli space of algebraic asymptotic hyperkähler structures.

\medskip
First, we recall the definition of a conical symplectic variety.

\begin{definition} \label{def: coni symp} 
    We call an affine holomorphic symplectic variety $(X, \omega)$ equipped with a good $\mathbb{C}^*$-action $\lambda$ a \textit{conical symplectic variety}. Here, a \textit{good $\mathbb{C}^*$-action} is a $\mathbb{C}^*$-action $\lambda$ satisfying the following two conditions:
    \begin{enumerate}
     \item The coordinate ring $\Gamma(X,\mathcal{O}_X)$ is positively graded with respect to the action $\lambda$.
     \item The action $\lambda$ acts on the holomorphic symplectic $2$-form $\omega$ with positive weight.
    \end{enumerate}
\end{definition}

\begin{remark}
    A conical symplectic variety $X$ may be singular. In this case, we consider $\omega$ as a $2$-form on the regular locus $X_{\mathrm{reg}}$.
\end{remark}

\subsection{Poisson Deformations}\label{subsec:PD}

In this subsection, we briefly recall the universal Poisson deformation of a crepant resolution of a conical symplectic variety, and clarify the relationship between its discriminant locus and the wall on the cohomology group. This wall plays an important role later in determining the boundary of the period domain.

\medskip
Let us begin by recalling the general notion of Poisson deformations. A Poisson deformation of a holomorphic symplectic variety is a deformation of the complex variety equipped with a compatible deformation of the Poisson structure, which corresponds to a relative holomorphic symplectic form. A universal Poisson deformation is characterized by its universal property: any Poisson deformation can be obtained uniquely as a pullback from this family.

\medskip
Next, we recall a theorem of Namikawa on the universal Poisson deformations:

\begin{theorem}[{Namikawa \cite[\S 5]{Namikawa11}}] \label{Namikawa Thm:CM and univ Poisson}
    Let $X$ be a conical symplectic variety.
    Assume that $X$ admits a projective crepant resolution $Y$. Then, there exist universal Poisson deformations $\mathcal{X}\to\mathcal{B}$ and $\mathcal{Y}\to\mathcal{C}$ of $X$ and $Y$, respectively. Here, the base spaces $\mathcal{B}$ and $\mathcal{C}$ are vector spaces, and $\mathcal{C}$ is naturally isomorphic to $H^2(Y;\mathbb{C})$ via the period map.
    Furthermore, there exists a finite Galois cover $q \colon \mathcal{C}\to\mathcal{B}$ such that if we let $\mathcal{X}'\to\mathcal{C}$ be the pullback of $\mathcal{X}$ via $q$, then $\mathcal{Y}$ is a simultaneous resolution of $\mathcal{X}'$.
    That is, there exists a morphism $\nu \colon \mathcal{Y}\to\mathcal{X}'$ such that the following diagram commutes:
    \begin{equation}
    \vcenter{
    \xymatrix{
       \mathcal{Y} \ar[d]_{} \ar[r]^\nu &\mathcal{X}' \ar[d]^{}\ar[r] &\mathcal{X} \ar[d]^{} \\
       \mathcal{C} \ar@{=}[r] &\mathcal{C} \ar[r]^{q}&\mathcal{B} \\
    }} \label{diagram:Yuni, Xuni}
    \end{equation}
    Moreover, for each $c \in \mathcal{C}$, the restriction of the morphism $\nu$ to the fiber over $c$,
    \[
    \nu_c \colon \mathcal{Y}_{c}\to \mathcal{X}'_{c}=\mathcal{X}_{q(c)},
    \]
    is a projective crepant resolution of $\mathcal{X}'_{c}$, and for a generic $c\in\mathcal{C}$, $\nu_c$ is an isomorphism.
\end{theorem}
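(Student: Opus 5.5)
We sketch the proof of Namikawa's theorem as it would go in \cite{Namikawa11}. The plan is to build the universal Poisson deformation of $Y$ first, then obtain the one for $X$ by descent, and finally compare the two through the Weyl-type group acting on $H^2(Y;\mathbb{C})$.

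\emph{Step 1: the deformation of $Y$.} Poisson deformations of the symplectic variety $Y$ are controlled by the truncated de Rham complex. Its first cohomology is $H^2(Y;\mathbb{C})$, and the obstructions vanish by a $T^1$-lifting argument. Since $Y$ is a crepant resolution of a rational singularity, $H^i(Y,\mathcal{O}_Y)=0$ for $i>0$, so the truncated de Rham complex computes $H^2(Y;\mathbb{C})$. This gives a formal universal Poisson deformation over the completion of $H^2(Y;\mathbb{C})$ at the origin, with the period map as the identification.

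\emph{Step 2: algebraization and the deformation of $X$.} The good $\mathbb{C}^*$-action on $X$ lifts to $Y$ and to the formal family, with positive weights on the base. This conical structure lets us algebraize the formal family to an honest family $\mathcal{Y}\to\mathcal{C}\cong H^2(Y;\mathbb{C})$. Similarly, the formal universal Poisson deformation of $X$, which exists because $X$ has finite-dimensional $PD_X$, algebraizes to $\mathcal{X}\to\mathcal{B}$, and $\mathcal{B}$ is a vector space because of the positive weights. Taking relative $\operatorname{Spec}\Gamma(\mathcal{Y},\mathcal{O})$ gives a Poisson deformation of $X$ over $\mathcal{C}$. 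Universality of $\mathcal{X}$ then yields a classifying map $q\colon\mathcal{C}\to\mathcal{B}$ together with the morphism $\nu$, and the diagram commutes by construction.

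\emph{Step 3: $q$ is a finite Galois cover.} This is the main obstacle. We first show that $q$ is quasi-finite, arguing by $\mathbb{C}^*$-equivariance: $q^{-1}(0)=\{0\}$, which follows from the period map being injective on deformations that are isomorphic to $X$. Equivariance together with $q^{-1}(0)=\{0\}$ then gives finiteness. For the Galois property, we consider the group $W$ of automorphisms of $H^2(Y;\mathbb{C})$ induced by the other crepant resolutions and the codimension-two ADE strata; this $W$ is a product of Weyl groups. We show that $q$ factors through $\mathcal{C}/W$ and that the induced map $\mathcal{C}/W\to\mathcal{B}$ is an isomorphism, using normality of $\mathcal{B}$ and the equality of degrees. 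Surjectivity of $q$ comes from the dimension count $\dim\mathcal{B}=\dim\mathcal{C}$, which holds because both spaces have dimension equal to the dimension of $PD$.

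\emph{Step 4: the fibres.} Each $\nu_c$ is projective and birational, and it is crepant because the relative symplectic form is nondegenerate on $\mathcal{Y}_c$. For generic $c$, the fibre $\mathcal{Y}_c$ is affine, since it has no compact curves: every curve class would pair to zero with a generic period. Hence $\nu_c$ is an isomorphism for generic $c$.
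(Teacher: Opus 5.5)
The paper gives no proof of this statement. It is quoted from Namikawa, so the comparison here is with Namikawa's argument. Your Steps 1, 2 (for $Y$) and 4 are consistent with it. The finiteness mechanism of Step 3, $\mathbb{C}^*$-equivariance plus $q^{-1}(0)=\{0\}$, is also the right one, but your stated reason for $q^{-1}(0)=\{0\}$ is off. The correct reason: if $q(c)=0$, then $\mathcal{Y}_c\to X$ is a projective crepant resolution of the conical variety $X$, the $\mathbb{C}^*$-action lifts to it with Euler field $\xi$, and so $\Omega_c=\ell^{-1}d(\iota_\xi\Omega_c)$ is exact ($\ell$ the weight of $\omega$), giving $c=[\Omega_c]=0$. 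Beyond this there are two genuine gaps.

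\textbf{Smoothness of $\mathcal{B}$.} You assert two things: that the universal Poisson deformation of $X$ exists because $PD_X(\mathbb{C}[\epsilon])$ is finite-dimensional, and that $\mathcal{B}$ is a vector space because the weights are positive. Neither follows.
\begin{itemize}
\item Finite-dimensionality yields at most a hull $R_X$.
\item Positive weights only make the algebraized base $\operatorname{Spec}A$ a cone with $\hat A=R_X$; for instance $A=\mathbb{C}[x,y]/(xy)$ is compatible with positive weights.
\end{itemize}
Unobstructedness of $PD_X$ is the main theorem of Namikawa's paper. Your later ``dimension count'' ($\dim\mathcal{B}=\dim\mathcal{C}$ because both equal the dimension of $PD$) silently uses it, so the argument is circular.

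With $Y$ available, the order must be reversed. First build the equivariant classifying map $q\colon\mathcal{C}\to\operatorname{Spec}A$. Then get finiteness from $q^{-1}(0)=\{0\}$, which needs no smoothness of the target, so $\dim A\ge d:=\dim H^2(Y;\mathbb{C})$. Then prove the tangent bound $\dim PD_X(\mathbb{C}[\epsilon])\le d$. Now, with $\mathfrak{m}$ the maximal ideal of the vertex, $d\le\dim A\le\dim_{\mathbb{C}}\mathfrak{m}/\mathfrak{m}^2\le d$, which forces $A$ to be a polynomial ring and $q$ to be surjective.

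That tangent bound is a real computation that your sketch does not supply. It cannot come from the natural map $PD_Y(\mathbb{C}[\epsilon])\to PD_X(\mathbb{C}[\epsilon])$: this map is $dq_0$, and it vanishes for the $A_1$-singularity, where $q(t)=t^2$. So the codimension-two ADE strata of $X$ must be treated separately.

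\textbf{The Galois property.} In Step 3 the group is misidentified, and the decisive step is assumed rather than proved.

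Flops contribute no deck transformations. Under the codimension-one identification $H^2(Y)\cong H^2(Y')$, the universal families of two crepant resolutions agree off the discriminant and have the same classifying map $q$. For example, for $\overline{\mathcal{O}_{\min}}\subset\mathfrak{sl}_n$ with $n\ge 3$ and resolutions $T^*\mathbb{P}^{n-1}$ and $T^*\check{\mathbb{P}}^{n-1}$, the map $q$ is an isomorphism.

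The Galois group is the product of monodromy-folded Weyl groups of the codimension-two strata alone; this is the content of Namikawa's sequel. There, $q\circ w=q$ is checked using the Brieskorn--Slodowy picture along those strata. Also, $\deg q=|W|$ is the conclusion, not an input: one shows $\mathcal{C}/W\to\mathcal{B}$ is \'etale in codimension one, then uses purity and $\pi_1(\mathcal{B})=1$.

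For the statement as quoted you do not need $W$ at all. Here is a direct argument.
\begin{itemize}
\item Let $U\subset\mathcal{B}$ be a dense open set over which $q$ is \'etale and $\mathcal{X}$ has smooth fibres. Over $q^{-1}U$ the map $\nu$ is an isomorphism, so $\mathcal{Y}|_{q^{-1}U}\cong q^*(\mathcal{X}|_U)$.
\item Let $\rho$ be the monodromy of $\mathcal{X}|_U$ on $H^2(Y;\mathbb{Z})$. Since $\mathcal{Y}\cong Y\times\mathcal{C}$ topologically, $q_*\pi_1(q^{-1}U)\subset\ker\rho$.
\item Conversely, suppose a loop $\gamma\in\ker\rho$ lifts to a path from $t$ to $t'$. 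The period of $\mathcal{Y}_s$ in the global trivialization is $s$ itself. So $t$ and $t'$ are the images of the same class $[\omega_{q(t)}]$ under identifications that differ by $\rho(\gamma)=1$, hence $t=t'$.
\item Therefore $q^{-1}(U)\to U$ is the normal covering attached to $\ker\rho$. Its deck transformations are linear maps $A$ of $H^2(Y;\mathbb{C})$ (conjugates of monodromy). Since $q\circ A=q$ on an open set, it holds on all of $\mathcal{C}$.
\end{itemize}
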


\begin{remark}
 We refer to the Poisson deformation $\mathcal{X}'$ as the \textit{affinization} of the universal Poisson deformation $\mathcal{Y}$ of $Y$.
\end{remark}

\begin{remark}\label{remark:period map for Poisson deformation}
 \begin{enumerate}[itemsep=\medskipamount]
   \item By Slodowy's lemma, the universal Poisson deformation $\mathcal{Y}\to\mathcal{C}$ admits a trivialization $\mathcal{Y} \simeq Y \times \mathcal{C}$ as smooth manifolds. Using this smooth trivialization, for each point $c \in \mathcal{C}$, the correspondence associating the period $[\Omega_c] \in H^2(Y;\mathbb{C})$ of the holomorphic symplectic form $\Omega_c$ on the fiber $\mathcal{Y}_c \simeq Y$ is called the period map (associated with the Poisson deformation).
   \item The period map is similarly defined for any Poisson deformation of $Y$. In this case, the period map is an isomorphism if and only if the family is the universal Poisson deformation.
 \end{enumerate}
\end{remark}

The discriminant locus of the universal Poisson deformation is defined as follows.
\begin{definition}
 Consider the setup of Theorem~\ref{Namikawa Thm:CM and univ Poisson}.
 The discriminant locus $D'$ of the universal Poisson deformation $\mathcal{Y}\to\mathcal{C}$ is the subset of $\mathcal{C}$ defined by:
 \[D' \coloneqq \{c\in\mathcal{C} \mid \text{the fiber }\mathcal{Y}_c \text{ is non-affine}\}.\]
 Equivalently, using the affinization $\mathcal{X}'$ of $\mathcal{Y}$, it can be written as:
 \[D' \coloneqq \{c\in\mathcal{C} \mid \text{the fiber }\mathcal{X}'_c \text{ is singular}\}.\]
\end{definition}

In the following, we characterize the discriminant locus $D'$. For this purpose, we use the following theorem on crepant resolutions (i.e., symplectic resolutions).

\begin{theorem}[{Kaledin \cite[Theorem 1.9]{Kaledin_06}}]\label{thm:Kaledin}
 Let $X$ be a conical symplectic variety, and let $\pi \colon Y\to X$ be a crepant resolution.
 Then, for every fiber $F$ of $\pi$, the even cohomology groups $H^{2p}(F;\mathbb{C})$ carry a pure $\mathbb{R}$-Hodge structure
 of weight $2p$ and type $(p,p)$. (Also, the odd-degree cohomology vanishes.)
\end{theorem}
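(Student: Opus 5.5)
This is Kaledin's result, so the plan reconstructs its proof in outline instead of deriving it from earlier statements in the paper. It has two parts. First, reduce the cohomology of a fiber $F$ to the cohomology of $Y$ near $F$. Second, use the holomorphic symplectic form together with the $\mathbb{C}^*$-action to force the Hodge structure to be of Tate type $(p,p)$.

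\textbf{Reduction to $H^\bullet(Y)$.} Since $X$ is conical with a good $\mathbb{C}^*$-action, the action lifts to $Y$ (the resolution is projective and crepant, hence canonical up to the action). Take $F=\pi^{-1}(x)$. By properness and the existence of a neighbourhood $U$ of $x$ retracting onto $x$, the space $\pi^{-1}(U)$ retracts onto $F$. For the central fiber $F_0=\pi^{-1}(0)$, the contracting $\mathbb{C}^*$-flow gives a homotopy equivalence $Y\simeq F_0$. Hence $H^\bullet(F_0)\cong H^\bullet(Y)$, compatibly with mixed Hodge structures. Since $F_0$ is proper, its weights in $H^k$ are $\le k$. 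Since $Y$ is smooth, its weights in $H^k$ are $\ge k$. The restriction is an isomorphism of MHS, so $H^k(Y)=H^k(F_0)$ is pure of weight $k$. For a non-central fiber $F$, I would use Kaledin's local product decomposition: near $x$, the germ $(X,x)$ is the product of a smooth symplectic factor and a conical symplectic germ (the transversal slice). This reduces $F$ to the central fiber of a smaller conical resolution. Alternatively, one can apply the same retraction argument with $\pi^{-1}(U)$ and semismallness.

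\textbf{Type $(p,p)$ and vanishing of odd degrees.} Here I would use the \emph{twistor-type deformation}, namely the universal Poisson deformation of Theorem~\ref{Namikawa Thm:CM and univ Poisson}. For generic $c$, the fiber $\mathcal{Y}_c$ is affine, and its cohomology equals that of $Y$ by Slodowy's trivialization. On the other side, the class $[\Omega_c]$ varies linearly and spans $H^2(Y;\mathbb{C})$. The key claim is that $H^k(Y)$ is generated by algebraic classes. First, $H^{2,0}(Y)=H^0(\Omega^2_Y)$ contributes nothing to $H^2(F_0)$: the weight-positive $\mathbb{C}^*$-action shows that $\omega$ is exact on $Y$, namely $\omega=d\iota_\xi\omega/\text{wt}$. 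This gives $H^2$ of type $(1,1)$. For higher degrees, I would follow Kaledin's argument. By semismallness ($\dim F\le \tfrac12\dim Y$), $H^{k}(F_0)$ vanishes for $k>\dim Y$. Because $\omega$ is exact and restricts to zero on $F_0$ (each fiber is isotropic), any holomorphic form class $\alpha\in F^pH^k$ with Hodge type $(a,b)$, $a>b$, must come from $\Omega^a_Y$ classes. These are killed by the Lefschetz-type pairing with $\omega^{n-a}$, which is an isomorphism $\Omega^a\cong\Omega^{2n-a}$. Combined with the weight bound $k\le\dim Y=2n$, this yields $H^{a,b}=0$ for $a\neq b$. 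Purity with $h^{p,q}=0$ for $p\neq q$ then forces odd cohomology to vanish, since weight-$(2p+1)$ Hodge structures have no $(p,p)$ part.

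\textbf{Main obstacle.} The main obstacle is the second step, excluding $(a,b)$ types with $a\neq b$ in higher degree. The isotropy and exactness argument only handles $H^2$ directly. For higher degrees, I would first try Kaledin's method: reduce modulo $p$ and use the Frobenius-splitting-type/Tate argument (pure Tate motives via the $\mathbb{C}^*$-equivariant decomposition, i.e.\ Bialynicki-Birula when fixed points are isolated). As a fallback, I would invoke the decomposition $H^\bullet(Y)=\bigoplus H^{\bullet-2d_i}(Y^{\mathbb{C}^*}_i)$ from a Bialynicki-Birula decomposition with respect to a torus, and argue inductively on the fixed components.
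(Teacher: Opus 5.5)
The paper gives no proof of this statement. It quotes Kaledin, and later uses it only for the central fibre $Y_0$ in degree $2$ (Lemma~\ref{lemma:generator of H_2(Y)}). So your proposal has to stand on its own, and there is a genuine gap.

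\textbf{What works.} Your purity argument for the central fibre is correct. Restriction $H^k(Y)\to H^k(F_0)$ is an isomorphism of mixed Hodge structures, and the weight bounds coming from properness of $F_0$ and smoothness of $Y$ meet at $k$.

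\textbf{Non-central fibres.} Here you rely on a conical transversal slice. Kaledin's formal product decomposition does not provide one in general. Your alternative, the analytic neighbourhood $\pi^{-1}(U)$, is not an algebraic variety, so the lower weight bound is unavailable for it. Purity of arbitrary fibres is therefore not established.

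\textbf{The type-$(p,p)$ claim.} This is the decisive gap, and it is the real content of the theorem. You identify $H^{2,0}(Y)$ with $H^0(\Omega^2_Y)$. For non-compact $Y$ this is false: Deligne's $F^2H^2(Y)$ is given by closed logarithmic forms on a good compactification, and holomorphic forms with worse poles at infinity need not define $F^2$-classes. Your own setup shows this. The classes $[\Omega_c]=c$ of the holomorphic $2$-forms on the fibres $\mathcal{Y}_c$ sweep out all of $H^2(Y;\mathbb{C})$. Restriction from the total space $\mathcal{Y}$ identifies the mixed Hodge structure of each $\mathcal{Y}_c$ with that of $Y$. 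But a non-zero pure weight-$2$ Hodge structure never equals its own $F^2$, so these classes cannot all be $F^2$-classes. Consequently:
\begin{itemize}
\item Exactness of the single form $\omega$ says nothing about $F^2H^2$.
\item Your higher-degree step, where classes of type $(a,b)$ ``come from $\Omega^a_Y$'' and are ``killed'' by $\Omega^a_Y\cong\Omega^{2n-a}_Y$, has no mechanism behind it. A sheaf isomorphism kills nothing without a vanishing theorem.
\item The mod-$p$ remark is only a pointer, not an argument.
\item A Bia{\l}ynicki-Birula decomposition for the contracting action expresses $H^\bullet(Y)$ through the cohomology of the smooth projective fixed components. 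That reproves purity but not Tate type. The fixed components are not conical symplectic resolutions, so there is nothing to induct on.
\end{itemize}

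\textbf{What is missing: a relative vanishing theorem.} One route that closes the gap runs as follows.
\begin{itemize}
\item Since $\pi$ is semismall, $R^q\pi_*\Omega^p_Y=0$ for $p+q>\dim Y$. This is a Nakano-type vanishing for semismall maps; it follows, for example, from Saito's theory, because $R\pi_*\mathbb{C}_Y[\dim Y]$ is perverse.
\item Via $\Omega^p_Y\cong\Omega^{2n-p}_Y$, this becomes $R^q\pi_*\Omega^p_Y=0$ for $q>p$. By the theorem on formal functions, $H^q(\hat{Y}_F,\hat{\Omega}^p_Y)=0$ for $q>p$.
\item The formal de Rham complex $\hat{\Omega}^\bullet_Y$ computes $H^\bullet(F;\mathbb{C})$. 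It maps quasi-isomorphically, and compatibly with filtrations, to the Du Bois complex of $F$. So the filtration induced by its stupid filtration is contained in Deligne's Hodge filtration, and hence $H^k(F)=F^{\lceil k/2\rceil}H^k(F)$.
\item Since $F$ is proper, its weights are $\le k$. Hodge symmetry on each $\mathrm{Gr}^W_w$ then forces $w=k$ and type $(k/2,k/2)$, and gives $H^k(F)=0$ for odd $k$.
\end{itemize}
This treats every fibre at once and never uses the $\mathbb{C}^*$-action. In degree $2$, which is all the paper needs, only the case $p=0$ enters, i.e.\ Grauert--Riemenschneider vanishing $R^q\pi_*\mathcal{O}_Y=0$.
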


\begin{remark}
 This theorem implies that the fiber $F$, which is a possibly singular projective variety, admits a Hodge decomposition similar to that of a non-singular variety.
\end{remark}

We prove the following lemma.

\begin{lemma}\label{lemma:generator of H_2(Y)}
 Let $X$ be a conical symplectic variety, and let $\pi \colon Y\to X$ be a crepant resolution. Consider the (possibly singular) projective variety $Y_0 \coloneqq \pi^{-1}(o)\subset Y$. Here, $o\in X$ is the unique fixed point of the $\mathbb{C}^*$-action. Then, $H_2(Y;\mathbb{C})$ is generated by algebraic curves on $Y_0$.
\end{lemma}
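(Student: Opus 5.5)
The argument has three steps: contract $Y$ onto $Y_0$ using the $\mathbb{C}^*$-action, apply Kaledin's theorem to $Y_0$, and show that the algebraic classes then exhaust $H_2$.

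\textbf{Step 1: $Y$ retracts onto $Y_0$.} Since $\pi$ is projective and crepant, the good $\mathbb{C}^*$-action $\lambda$ on $X$ lifts to $Y$; I take this as standard for symplectic resolutions of conical symplectic varieties. For $t\to 0$ in $\mathbb{C}^*$, every point of $X$ flows to the fixed point $o$, because the coordinate ring is positively graded. By properness of $\pi$, the limit $\lim_{t\to 0} t\cdot y$ then exists in $Y$ for every $y$ and lies in $Y_0=\pi^{-1}(o)$. A standard argument (e.g.\ Slodowy's, or a Białynicki-Birula type argument) gives a $\mathbb{C}^*$-equivariant deformation retraction of $Y$ onto $Y_0$. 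Concretely:
\begin{enumerate}
\item choose a conic neighbourhood $U=\pi^{-1}(B)$ of $Y_0$, where $B$ is a small ball around $o$, that deformation retracts onto $Y_0$ (a projective subvariety has such neighbourhoods);
\item use the $\mathbb{R}_{>0}\subset\mathbb{C}^*$ flow to push all of $Y$ into $U$.
\end{enumerate}
Hence the inclusion $Y_0\hookrightarrow Y$ is a homotopy equivalence, and $H_2(Y_0;\mathbb{C})\simeq H_2(Y;\mathbb{C})$.

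\textbf{Step 2: Hodge structure of $Y_0$.} Apply Theorem~\ref{thm:Kaledin} to the fiber $F=Y_0$. It gives that $H^2(Y_0;\mathbb{C})$ is pure of weight $2$ and type $(1,1)$. Dually, $H_2(Y_0;\mathbb{Q})$ is pure of weight $-2$ and type $(-1,-1)$.

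\textbf{Step 3: algebraic curves span $H_2(Y_0)$.} This is the step I expect to be the main obstacle, because $Y_0$ may be singular and reducible, so the Lefschetz $(1,1)$ theorem does not apply directly. My plan is the following.
\begin{enumerate}
\item Take a resolution $\tilde Y_0\to Y_0$ of the reduced scheme, e.g.\ the disjoint union of resolutions of its irreducible components.
\item By Deligne's theory of mixed Hodge structures, the weight $-2$ part of $H_2(Y_0)$ is exactly the image of $H_2(\tilde Y_0)$ under pushforward. This surjectivity onto the lowest-weight piece is the dual of the statement that $W_kH^k(Y_0)$ injects into $H^k(\tilde Y_0)$ for proper $Y_0$. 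By Step 2, $H_2(Y_0)$ is entirely of weight $-2$, so pushforward from $H_2(\tilde Y_0)$ is surjective.
\item The pushforward is a morphism of Hodge structures, and $H_2(Y_0)$ is entirely of type $(-1,-1)$. Hence $H_2(Y_0;\mathbb{Q})$ is the image of the Hodge classes in $H_2(\tilde Y_0;\mathbb{Q})$, i.e.\ of $H_2\cap H_{-1,-1}$, using that a morphism of pure Hodge structures is strictly compatible with the Hodge decomposition and that rational Hodge classes surject onto rational Hodge classes by semisimplicity.
\item Each component of $\tilde Y_0$ is smooth projective, so by the hard Lefschetz and Lefschetz $(1,1)$ theorems (via Poincaré duality $H_2\simeq H^{2d-2}$ and $H^{2d-2}\simeq H^2$ through a Kähler class), every rational Hodge class in $H_2$ is a $\mathbb{Q}$-combination of algebraic curve classes.
\end{enumerate}
Pushing these curves forward to $Y_0$ and then into $Y$ shows that $H_2(Y;\mathbb{C})$ is spanned by classes of algebraic curves lying on $Y_0$.
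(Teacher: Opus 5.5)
Your proposal is correct and follows essentially the same route as the paper: retract $Y$ onto $Y_0$ via the $\mathbb{C}^*$-action, use Kaledin's purity for $H_2(Y_0)$, use Deligne's result that $f_*\colon H_2(\tilde Y_0)\to H_2(Y_0)$ surjects onto $W_{-2}H_2(Y_0)$, and finish with Lefschetz $(1,1)$ on the resolution. Your Step 3 is in fact more careful than the paper's proof, which simply asserts that all of $H_2(\tilde Y_0;\mathbb{C})$ is algebraic; your use of the type-$(-1,-1)$ part of Kaledin's theorem, semisimplicity and hard Lefschetz to reduce to rational Hodge classes is exactly what that step needs (the only slip is your aside that ``$W_kH^k(Y_0)$ injects'': it should say that the kernel of $H^k(Y_0)\to H^k(\tilde Y_0)$ is $W_{k-1}H^k(Y_0)$, which is harmless here by purity).
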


\begin{proof}
 Since $Y$ retracts to $Y_0$ by the $\mathbb{C}^*$-action, we have $H^2(Y;\mathbb{C})\simeq H^2(Y_0;\mathbb{C})$.
 By the universal coefficient theorem, we have $H_2(Y_0;\mathbb{C})\simeq \mathrm{Hom}(H^2(Y_0;\mathbb{C}),\mathbb{C})$, and $H_2(Y_0;\mathbb{C})$ admits a pure $\mathbb{R}$-Hodge structure of weight $-2$.

 \medskip
 Let $f \colon \tilde{Y}_0 \to Y_0$ be a resolution of singularities of $Y_0$.
 Then, by the theory of mixed Hodge structures by Deligne, the image of $f_* \colon H_2(\tilde{Y}_0;\mathbb{C})\to H_2(Y_0;\mathbb{C})$ coincides with the lowest weight piece $W_{-2}H_2(Y_0;\mathbb{C})$ of the weight filtration. Now, since $H_2(Y_0;\mathbb{C})$ is pure of weight $-2$ by Kaledin's theorem, $f_*$ is surjective.

 \medskip
 Since $\tilde{Y}_0$ is a non-singular projective variety, the Lefschetz $(1,1)$-theorem implies that $H_2(\tilde{Y}_0;\mathbb{C})$ is generated by algebraic cycles. Because the images of its generators under $f_*$ are also algebraic cycles, $H_2(Y_0;\mathbb{C})$ is also generated by algebraic cycles.
 Consequently, the assertion follows.
\end{proof}

We define the notion of a wall, which is naturally introduced from the universal Poisson deformation.

\begin{definition}\label{def:wall}
 Let $X$ be a conical symplectic variety, and let $\pi \colon Y\to X$ be a crepant resolution.
 By Lemma~\ref{lemma:generator of H_2(Y)}, we can write
 \[H_2(Y;\mathbb{C}) = \langle [\Sigma_\alpha]\mid\Sigma_\alpha \subset Y_0 :\text{algebraic curve} \rangle.\]
 Let $\mathcal{Y}\to\mathcal{C}$ be the universal Poisson deformation of $Y$, and consider the isomorphism $\mathcal{C}\simeq H^2(Y;\mathbb{C})$ via the period map.
 For each $\alpha$, we define
 \[H_\alpha \coloneqq \{[\Omega_c]\in H^2(Y;\mathbb{C})\mid \langle\Sigma_\alpha, \Omega_c\rangle \coloneqq \int_{\Sigma_\alpha}\Omega_c = 0\}.\]
 Here, $\Omega_c$ is the holomorphic symplectic $2$-form on the fiber $\mathcal{Y}_c$, though it is identified with a closed $2$-form on $Y$ via the smooth trivialization as in Remark~\ref{remark:period map for Poisson deformation}.
 Then, the wall $D$ on $H^2(Y;\mathbb{C})$ is defined by
 \[D \coloneqq \bigcup_\alpha H_\alpha.\]
\end{definition}

We prove the following proposition.
\begin{proposition}\label{prop:meaning of wall}
 Consider the setup of Definition~\ref{def:wall}, and let $D$ be the wall on $H^2(Y;\mathbb{C})$.
 Let $D'$ be the discriminant locus on the base space $\mathcal{C}$ of the universal Poisson deformation $\mathcal{Y}$.
 Then, under the isomorphism $\mathcal{C} \simeq H^2(Y;\mathbb{C})$ induced by the period map, $D'$ coincides with $D$.
\end{proposition}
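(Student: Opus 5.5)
We prove the two inclusions $D\subset D'$ and $D'\subset D$ separately, working fiberwise over $c\in\mathcal{C}\simeq H^2(Y;\mathbb{C})$ with the morphism $\nu_c\colon \mathcal{Y}_c\to\mathcal{X}'_c$ from Theorem~\ref{Namikawa Thm:CM and univ Poisson}. Throughout we use the smooth trivialization $\mathcal{Y}\simeq Y\times\mathcal{C}$ of Remark~\ref{remark:period map for Poisson deformation}. Under it, each algebraic curve $\Sigma_\alpha\subset Y_0$ determines a homology class in every fiber $\mathcal{Y}_c$, and $\langle\Sigma_\alpha,\Omega_c\rangle$ depends only on the class $[\Sigma_\alpha]\in H_2(Y;\mathbb{C})$.

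\emph{Step 1: $D'\subset D$.} Let $c\in D'$, so $\mathcal{Y}_c$ is non-affine. Then $\nu_c$ is a projective crepant resolution that is not an isomorphism, hence it has a positive-dimensional fiber. That fiber is projective, so it contains an irreducible projective curve $C\subset\mathcal{Y}_c$. Because $\Omega_c$ is holomorphic, the restriction $\Omega_c|_{C}$ is a holomorphic $2$-form on a curve and therefore vanishes, giving $\int_C\Omega_c=0$. By Lemma~\ref{lemma:generator of H_2(Y)} we may write $[C]=\sum_\alpha a_\alpha[\Sigma_\alpha]$ in $H_2(Y;\mathbb{C})$, so $\sum a_\alpha\langle\Sigma_\alpha,\Omega_c\rangle=0$. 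This only shows that $[\Omega_c]$ lies in the hyperplane $[C]^\perp$, not in one of the $H_\alpha$. To close the gap we need $[C]$ itself to be one of the generating classes. We get this as follows: $C$ is a compact curve with nonzero class (its class pairs positively with a relatively ample class). Specializing along the $\mathbb{C}^*$-action, using the conical structure of the family, the curve $C$ degenerates to an effective algebraic cycle in $Y_0$ with the same class. We then enlarge the family $\{\Sigma_\alpha\}$ to include \emph{all} algebraic curves in $Y_0$, which Definition~\ref{def:wall} permits since it is phrased as a union over curves generating $H_2$. Consequently each irreducible component of the limit cycle is some $\Sigma_\alpha$. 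To finish, we note that the locus of $c$ where a class $\gamma$ is represented by a curve is invariant under the $\mathbb{C}^*$-action and closed, which lets us choose such a $\Sigma_\alpha$ with $\int_{\Sigma_\alpha}\Omega_c=0$. More concretely, the relative Hilbert scheme of curves in $\mathcal{Y}/\mathcal{C}$ in class $\gamma$ has image exactly the hyperplane $\gamma^\perp$.

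\emph{Step 2: $D\subset D'$.} Let $[\Omega_c]\in H_\alpha$. We must show that $\mathcal{Y}_c$ contains a compact curve, which forces it to be non-affine. We deform $\Sigma_\alpha$ along the hyperplane $H_\alpha$. For a holomorphic symplectic manifold, the obstruction to deforming a curve $\Sigma$ along a family of symplectic deformations is exactly the period $\int_\Sigma\Omega$; this is the standard semiregularity / Ran–Voisin argument. The conclusion is that the relative Douady space of deformations of $\Sigma_\alpha$ (or of its rational components) surjects onto a neighborhood of $0$ in $H_\alpha$. Using the $\mathbb{C}^*$-action, which scales $\Omega_c$ with positive weight and preserves $H_\alpha$, we extend this from a neighborhood to all of $H_\alpha$: a compact curve over $tc$ for small $t$ transports to a compact curve over $c$. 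A compact curve in $\mathcal{Y}_c$ is contracted by the affinization, so $\nu_c$ is not an isomorphism and $c\in D'$.

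\emph{Main obstacle.} The hardest step is the deformation statement in Step 2: that $\Sigma_\alpha$ survives exactly over the hyperplane $H_\alpha$, including when $\Sigma_\alpha$ is singular or reducible. Our first approach is to replace $\Sigma_\alpha$ by the image of a stable map from a rational curve, and then apply the symplectic-deformation count of the expected dimension. That count is $\dim\mathcal{C}-1$, since the obstruction space is one-dimensional via $\Omega$, and this dimension forces the image of the moduli space to be the full hyperplane. If this route fails, we fall back on Namikawa's description: $D'$ is a union of hyperplanes which are the orthogonal complements of the classes of curves contracted by $\nu_c$, and $H_\alpha$ equals one of these by the conical scaling.
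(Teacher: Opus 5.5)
Your proposal has a genuine gap, exactly where you sensed trouble. With $D$ as in Definition~\ref{def:wall} it cannot be closed: whether the $\Sigma_\alpha$ are read as irreducible curves or as arbitrary cycles in $Y_0$, there are examples with $D\neq D'$.

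In Step~1 you rightly observe that expanding $[C]=\sum a_\alpha[\Sigma_\alpha]$ only yields $[\Omega_c]\in[C]^\perp$. The paper's own proof makes precisely this leap (``Thus, there exists an $I_0$-holomorphic algebraic curve $\Sigma_\alpha$ on $Y_0$ such that $\langle \Sigma_\alpha, \Omega_c\rangle=0$''). Your $\mathbb{C}^*$-degeneration does give an effective cycle $\sum_i m_i\Sigma_i\subset Y_0$ of class $[C]$, hence $\sum_i m_i\langle\Sigma_i,\Omega_c\rangle=0$. But nothing forces a single component to have vanishing period, and your closing remarks (a closed $\mathbb{C}^*$-invariant locus, the image of a relative Hilbert scheme) do not supply one. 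Concretely, take the minimal resolution of $\mathbb{C}^2/\mathbb{Z}_3$. There $Y_0=C_1\cup C_2$ and, with $e_i=[C_i]$, $D'=e_1^\perp\cup e_2^\perp\cup(e_1+e_2)^\perp$. Over a generic $c\in(e_1+e_2)^\perp$ the contracted $(-2)$-curve degenerates to $C_1+C_2$, while $\langle e_1,\Omega_c\rangle=-\langle e_2,\Omega_c\rangle\neq 0$. So if the $\Sigma_\alpha$ are irreducible curves, $D'\not\subset D$.

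The only way to rescue Step~1 is to let $\Sigma_\alpha$ be the whole limit cycle (reducible, possibly non-reduced). Then Step~2 fails, and it fails even for irreducible curves, because a curve in $Y_0$ need not deform over its entire orthogonal hyperplane.

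\begin{itemize}
\item Cycles whose classes are not roots, such as $2C_1+C_2$ above (or $C_1\cup C_3$ in type $A_3$), have orthogonal hyperplanes whose generic fibers are affine.
\item For an irreducible example, take $Y=T^*(SL_3/B)$. Its universal Poisson deformation is the Grothendieck--Springer family $G\times_B\mathfrak{b}\to\mathfrak{h}$, whose fibers over regular $c$ are $G/T$, so $D'=(\alpha_1^\vee)^\perp\cup(\alpha_2^\vee)^\perp\cup(\alpha_1^\vee+\alpha_2^\vee)^\perp$. Yet $Y_0=\{(p,\ell)\in\mathbb{P}^2\times\check{\mathbb{P}}^2 : p\in\ell\}$ contains the irreducible rational curve $t\mapsto([1:t:0],[t^2:-t:1])$ of bidegree $(1,2)$, i.e.\ of class $\alpha_1^\vee+2\alpha_2^\vee$. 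Its orthogonal hyperplane is not contained in $D'$.
\end{itemize}

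The Ran--Voisin mechanism forces a curve to follow its Hodge locus only when its deformations inside the central fiber have the expected dimension. Here they move in too large a family, so the dimension count gives a lower bound but no dominance. Your Namikawa fallback fails for the same reason. The paper's converse is likewise a non sequitur: vanishing of $\int_{\Sigma_\alpha}\Omega_c$ and $\int_{\Sigma_\alpha}\bar\Omega_c$ does not make $\Sigma_\alpha$ itself $I_c$-holomorphic.

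What is true is Namikawa's result that $D'$ is a finite union of hyperplanes, each of the form $\gamma^\perp$ with $\gamma$ the class of a curve in a generic fiber over it. Restricting $\{\Sigma_\alpha\}$ to the $\mathbb{C}^*$-limits in $Y_0$ of these finitely many curves makes $D=D'$. Your degeneration argument is precisely what places those limits in $Y_0$.
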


\begin{proof}
 Since each fiber $\mathcal{Y}_c$ of the universal Poisson deformation $\mathcal{Y}\to\mathcal{C}$ is diffeomorphic to $Y$ as a smooth manifold (cf.~Remark~\ref{remark:period map for Poisson deformation}), we can write $\mathcal{Y}_c\simeq (Y,I_c)$ using the complex structure $I_c$ on $Y$.

 \medskip
 Assume that $c\in D'$, that is, $Y_c \coloneqq (Y,I_c)$ is non-affine.
 In this case, the target space $X_c$ of the affinization map $\nu \colon Y_c \to X_c$ is singular. For a singular point $x \in X_c$, its fiber $F \coloneqq \nu^{-1}(x)\subset Y_c$ is an $I_c$-holomorphic subvariety.
 Since $F$ is a projective variety, by taking successive hyperplane sections if necessary, we obtain an algebraic curve $\Sigma \subset F$ such that $\langle \Sigma, \Omega_c\rangle=0$.
 By Lemma~\ref{lemma:generator of H_2(Y)}, $H_2(Y;\mathbb{C})$ is generated by algebraic curves on $Y_0$. Thus, there exists an $I_0$-holomorphic algebraic curve $\Sigma_{\alpha}$ on $Y_0$ such that $\langle \Sigma_\alpha, \Omega_c\rangle=0$. Therefore, $[\Omega_c] \in H_\alpha \subset D$.

 \medskip
 Conversely, assume that $[\Omega_c] \in D$.
 By definition, there exists an algebraic curve $\Sigma_\alpha$ on $Y_0$ such that $\langle\Sigma_\alpha, \Omega_c\rangle=0$ holds for the holomorphic symplectic $2$-form $\Omega_c$ on $Y_c=(Y,I_c)$.
 Noting that $\Sigma_{\alpha}$ is a real cycle, $\langle\Sigma_\alpha, \bar{\Omega}_c\rangle=0$ also holds. This implies that the curve $\Sigma_\alpha$ is of $(1,1)$-type with respect to the complex structure $I_c$. Namely, the curve $\Sigma_\alpha$ is a holomorphic curve on $Y_c$. Therefore, $Y_c$ is non-affine. That is, $c\in D'$.
 Consequently, the assertion follows.
\end{proof}

\subsection{Setup for Asymptotic Hyperkähler metrics}\label{subsec:setup for AHK}

In this subsection, based on \cite{Kotani_26_PTM}, we define asymptotic hyperkähler metrics and the moduli space $\mathcal{M}$ studied in this paper.

\medskip
We recall the definition of a cone metric on a conical symplectic variety.
\begin{definition}\label{def:cone met on coni symp vrt}
   Let $X$ be a conical symplectic variety with a $\mathbb{C}^*$-action $\lambda$.
   We say that a metric $g_0$ on the regular locus $X_{\mathrm{reg}}$ is a \textit{cone metric} if there exists a positive integer $k$ such that $\lambda_r^*g_0 = r^k g_0$ for any action $\lambda_r$ with $r>0$.
\end{definition}

\begin{remark}
 If the cone metric $g_0$ is a hyperkähler metric, then it follows that $k=2$ from the discussion on the twistor space \cite{Kotani_26_PTM}.
\end{remark}

When the conical symplectic variety $X$ admits a projective crepant resolution $Y$, the asymptotic behavior of a metric on the resolution $Y$ is defined as follows.

\begin{definition}\label{def:asymp to cone metric}
   Let $X$ be a conical symplectic variety. Assume that the regular locus $X_{\mathrm{reg}}$ admits a cone metric $g_0$.
   Let $Y$ be a projective crepant resolution of $X$.
   We say that a metric $g$ on the resolution $Y$ is \textit{asymptotic to the metric $g_0$ at infinity on $Y$} if it satisfies the following condition:
   Identifying the regular locus $X_{\mathrm{reg}}$ with a subset of $Y$ naturally, fix an arbitrary point $x_0\in X_{\mathrm{reg}}$. Then, we have
   \[\lim_{r\to \infty} \|g-g_0\|_{g_0(r\cdot x_0)}=0, \]
   where $r\cdot x_0$ denotes the action of $r>0$ on $X$, and $\|\cdot\|_{g_0(r\cdot x_0)}$ denotes the norm induced by the metric $g_0$ at the point $r\cdot x_0\in X_{\mathrm{reg}}$.
\end{definition}

\begin{remark}
   No assumption on the rate of convergence is required.
\end{remark}

\begin{example} Examples of algebraic asymptotic hyperkähler metrics are as follows:
 \begin{enumerate}[itemsep=\medskipamount]
   \item ALE gravitational instantons~(cf.~\cite{Kronheimer89_HK_quot}): Metrics asymptotic at infinity to the standard flat metric on the quotient space $X = \mathbb{C}^2 / \Gamma \ (\Gamma \subset SU(2))$ with a Kleinian singularity (cf.~\S\ref{sec:examples}).
   \item QALE hyperkähler metrics: Higher-dimensional analogs of ALE metrics, corresponding to the case $X = \mathbb{C}^{2n} / G \ (G \subset Sp(n))$. (Provided the metric is assumed to be algebraic.)
   \item Nakajima quiver varieties~(cf.~\cite{Nakajima94}) and toric hyperkähler varieties~(cf.~\cite{Bielawski-Dancer00}): Metrics constructed via hyperkähler quotients (cf.~\S\ref{sec:examples}).
 \end{enumerate}
 \medskip
\end{example}

Throughout this and the subsequent section, we work under the following setup for the moduli space of algebraic asymptotic hyperkähler structures.

\begin{setup}\label{setup:general case}
 Let $X$ be a conical symplectic variety such that the regular locus $X_{\mathrm{reg}}$ admits an algebraic hyperkähler cone metric $g_0$, and let $Y$ be a projective crepant resolution of $X$.
 We define the moduli space $\mathcal{M}$ of algebraic hyperkähler structures as follows:
 \[
     \mathcal{M} \coloneqq \left\{(Y,g,I,J,K) \mid \text{$g$ is algebraic and asymptotic to $g_0$}\right\}/(\text{isomorphism}).
 \]
\end{setup}

\begin{remark}
 For the definition of an algebraic hyperkähler metric, see Remark~\ref{remark: twistor spaces}.
\end{remark}

\subsection{The Principal twistor model}\label{subsec:PTM}\quad
The principal twistor model (PTM) \cite{Kotani_26_PTM} is a geometric framework to systematically study algebraic asymptotic hyperkähler structures. It achieves this by formulating the associated twistor spaces from the perspective of Poisson deformations. In this subsection, we recall the definition and universality of the PTM. This universality establishes the foundation for the injectivity of the period map in the next section.

\medskip
We recall the definition of the principal twistor model.

\begin{definition}
Let $P$ be a $(2n+d+1)$-dimensional complex manifold ($n \ge 1, d \ge 0$). We call a tuple $(\varphi \colon P\to\mathcal{C}(2), \omega_P, \tau_P)$ satisfying the following (P1)--(P3) a \textit{principal twistor model} (PTM).
\begin{itemize}
    \item[(P1)] It admits a holomorphic fibration structure $\varphi \colon P \to \mathcal{C}(2)$. Here, $\mathcal{C}$ is a $d$-dimensional complex vector space, and the base space is the vector bundle $\mathcal{C}(2) \coloneqq \mathcal{C} \otimes \mathcal{O}(2)$ over $\mathbb{P}^1$.
    \item[(P2)] It admits a real structure $\tau_P \colon P \to \overline{P}$. This induces a real structure $\sigma_2 \colon \mathcal{C}(2) \to \overline{\mathcal{C}(2)}$ on the base space.
    \item[(P3)] There exists a relative holomorphic symplectic $2$-form $\omega_P \in \Gamma(P, \wedge^2 {T'}^*_{\varphi} \otimes \pi_P^*\mathcal{O}(2))$ satisfying $\tau_P^*\omega_P = \overline{\omega_P}$. (Here, $\pi_P$ is the natural projection $P \to \mathbb{P}^1$.)
\end{itemize}
\end{definition}

We state a remark on twistor spaces.

\begin{remark} \label{remark: twistor spaces}
 \begin{enumerate}[itemsep=\medskipamount]
   \item A twistor model is defined as the special case of a PTM corresponding to $d=0$, represented by the tuple $(Z\to\mathbb{P}^1,\omega,\tau)$.
   \item Given a family of twistor lines (i.e.,~holomorphic sections compatible with $\tau$) $\{\ell_x\}_{x\in M}$ foliating $Z$, the tuple $(Z, \tau, \omega, \{\ell_x\}_{x\in M})$ is called a twistor space. By the twistor correspondence \cite{HKLR}, there is a one-to-one correspondence between twistor spaces and hyperkähler structures $(M,g,I,J,K)$ on the parameter space $M$ of twistor lines.
   \item A hyperkähler metric (structure) is said to be algebraic if the corresponding twistor space is algebraic as a twistor model.
     That is, $Z$ can be expressed by gluing two copies of a Poisson deformation over $\mathbb{C}$ via an algebraic map, and the real structure is an algebraic morphism between the two deformations (cf.~\cite[Def.~3.29]{Kotani_26_PTM}).
 \end{enumerate}
\end{remark}

The following is a fundamental proposition for the PTM.

\begin{proposition}\label{prop:pullback twistor model}
    Let $(\varphi \colon P\to\mathcal{C}(2), \omega_P, \tau_P)$ be a PTM.
    For any real section $s \in H^0(\mathbb{P}^1, \mathcal{C}(2))^{\sigma_2}$ on the vector bundle $\mathcal{C}(2)$ (i.e., $\sigma_2 \circ s=s \circ \sigma_{\mathrm{ap}}$), consider the pullback $\pi_s \coloneqq s^*\varphi \colon Z_s \to \mathbb{P}^1$. Then, together with the restriction of the form $\omega_s \coloneqq \omega_P|_{Z_s}$ and the real structure $\tau_s \coloneqq \tau_P|_{Z_s}$, the tuple $(Z_s, \tau_s, \omega_s)$ forms a twistor model.
\end{proposition}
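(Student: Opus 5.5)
We need to prove that for a real section $s$, the pullback $(Z_s,\tau_s,\omega_s)$ is a twistor model: a holomorphic fibration $Z_s\to\mathbb{P}^1$ with a real structure covering the antipodal map and a relative holomorphic symplectic form twisted by $\mathcal{O}(2)$ and compatible with the real structure. The plan is to take each axiom (P1)--(P3) with $d$ replaced by $0$ and verify it for the pullback, using only that $s$ is a holomorphic section and that it is real.

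First, I will check that $Z_s \coloneqq P\times_{\mathcal{C}(2)} \mathbb{P}^1$, taken via $\varphi$ and $s$, is a complex manifold of dimension $2n+1$ fibred over $\mathbb{P}^1$. Since $s$ is a section of the bundle projection $\mathcal{C}(2)\to\mathbb{P}^1$, its image $s(\mathbb{P}^1)$ is a closed complex submanifold of $\mathcal{C}(2)$, and $Z_s$ is identified with $\varphi^{-1}(s(\mathbb{P}^1))\subset P$. Smoothness of $Z_s$ of the expected dimension requires $\varphi$ to be transverse to $s(\mathbb{P}^1)$. I will obtain this from $\varphi$ being a submersion, which I take to be implicit in ``holomorphic fibration'' in (P1); then $Z_s$ is a submanifold of codimension $d$, so it has dimension $2n+1$. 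The map $\pi_s=\pi_P|_{Z_s}$ equals $s^{-1}\circ\varphi|_{Z_s}$, which is holomorphic and submersive onto $\mathbb{P}^1$. Its fibres are the fibres $\varphi^{-1}(s(\zeta))$, each of dimension $2n$.

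Second, the real structure. The reality condition $\sigma_2\circ s=s\circ\sigma_{\mathrm{ap}}$ says that $\sigma_2$ preserves $s(\mathbb{P}^1)$. Since $\tau_P$ covers $\sigma_2$, that is, $\varphi\circ\tau_P=\sigma_2\circ\varphi$, it follows that $\tau_P$ maps $Z_s=\varphi^{-1}(s(\mathbb{P}^1))$ into itself. Hence $\tau_s=\tau_P|_{Z_s}$ is an antiholomorphic involution of $Z_s$, and it covers $\sigma_{\mathrm{ap}}$ on $\mathbb{P}^1$.

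Third, the symplectic form. The relative tangent bundle satisfies $T'_{\pi_s}=T'_\varphi|_{Z_s}$, because the fibres of $\pi_s$ are exactly fibres of $\varphi$. Also $\pi_s^*\mathcal{O}(2)=(\pi_P^*\mathcal{O}(2))|_{Z_s}$. Therefore the restriction $\omega_s$ is a section of $\wedge^2 {T'}^*_{\pi_s}\otimes\pi_s^*\mathcal{O}(2)$. On each fibre it is nondegenerate, since it coincides there with $\omega_P$ restricted to a $\varphi$-fibre. Finally, the identity $\tau_s^*\omega_s=\overline{\omega_s}$ follows by restricting $\tau_P^*\omega_P=\overline{\omega_P}$ to $Z_s$.

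The main obstacle is the first step: smoothness and correct dimension of $Z_s$ depend on $\varphi$ being a submersion. If (P1) does not explicitly include this, I would invoke the PTM construction in \cite{Kotani_26_PTM}, where $P$ is locally a product over $\mathcal{C}(2)$. In that case $Z_s$ is locally the pullback of a universal Poisson deformation along the holomorphic map $s$, and is therefore smooth.
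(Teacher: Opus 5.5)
Your proposal is correct. The paper states this proposition without proof (it is recalled from \cite{Kotani_26_PTM} as a direct consequence of the definitions), and your axiom-by-axiom check of (P1)--(P3) with $d=0$ is exactly the verification one expects: you identify $Z_s$ with $\varphi^{-1}(s(\mathbb{P}^1))$, use $\varphi\circ\tau_P=\sigma_2\circ\varphi$ together with the reality of $s$ to see that $\tau_P$ preserves $Z_s$ and covers $\sigma_{\mathrm{ap}}$, and restrict $\omega_P$ via $T'_{\pi_s}=T'_\varphi|_{Z_s}$.

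The submersion property you flag as the main obstacle is already built into the definition, so you do not need the fallback to the explicit construction of $\mathcal{Y}(1)$. For $T'_\varphi$ in (P3) to be a vector bundle carrying the fibrewise nondegenerate form $\omega_P$, $\ker d\varphi$ must have constant rank, and for the surjective fibration $\varphi$ of (P1) this forces $d\varphi$ to be onto.
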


We state the universality theorem for the PTM.

\begin{theorem}[{\cite[Theorem~3.40]{Kotani_26_PTM}}] \label{thm:univ of PTM}
    Let $X$ be a conical symplectic variety with a crepant resolution $Y \to X$, and let $g_0$ be a hyperkähler cone metric on the regular locus $X_{\mathrm{reg}}$.
\begin{enumerate}[itemsep=\medskipamount]
    \item (Construction of the PTM):\quad The hyperkähler cone metric $g_0$ determines a natural PTM $(\mathcal{Y}(1) \to \mathcal{C}(2), \omega_P, \tau_P)$ via the universal Poisson deformation $\mathcal{Y}\to\mathcal{C}$ of $Y$.
    \item (Universality of the PTM):\quad For any algebraic hyperkähler metric $g$ asymptotic to $g_0$ at infinity, there exists a unique real section $s \in H^0(\mathbb{P}^1, \mathcal{C}(2))^{\sigma_2}$ such that the corresponding twistor space $Z$ is isomorphic, as a twistor model, to the pullback $Z_s$ defined in Proposition~\ref{prop:pullback twistor model}.
\end{enumerate}
\end{theorem}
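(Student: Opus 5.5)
The plan is to treat the two parts separately. Part (1) is a construction out of Namikawa's universal Poisson deformation (Theorem~\ref{Namikawa Thm:CM and univ Poisson}) together with the good $\mathbb{C}^*$-action. Part (2) is a classification argument that reduces an arbitrary algebraic twistor space to classifying maps into the base $\mathcal{C}$.

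\emph{Construction.} First, I would show that the good $\mathbb{C}^*$-action on $X$ lifts to $\mathcal{Y}\to\mathcal{C}$. The argument is that the universal property makes the family $\mathbb{C}^*$-equivariant. Because the symplectic form has weight $2$ (the hyperkähler cone case), $\mathbb{C}^*$ acts on $\mathcal{C}\simeq H^2(Y;\mathbb{C})$ with weight $2$.

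I would then glue two copies of $\mathcal{Y}$ over the standard charts $U_0,U_\infty$ of $\mathbb{P}^1$. On the overlap, the point $(y,\zeta)$ is identified with $(\lambda_{\zeta^{-1}}(y),\zeta^{-1})$. Since the base coordinate transforms by $\zeta^{-2}$, this produces a fibration $\mathcal{Y}(1)\to\mathcal{C}(2)$. The relative symplectic forms $\Omega$ glue to a section $\omega_P$ of $\wedge^2T'^*_\varphi\otimes\pi_P^*\mathcal{O}(2)$, again because of the weight $2$.

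For the real structure $\tau_P$, I would start from the algebraic twistor space of the cone metric $g_0$. Its antipodal real structure identifies the Poisson deformation of $X$ over one chart with the complex conjugate of the deformation over the other chart. By the universal property, together with the finite Galois cover $q$, this identification lifts uniquely to an antiholomorphic isomorphism $\mathcal{Y}\to\overline{\mathcal{Y}}$ covering a real structure on $\mathcal{C}$. These lifts glue to $\tau_P$, and the identity $\tau_P^*\omega_P=\overline{\omega_P}$ is inherited from the twistor space of $g_0$.

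\emph{Universality.} Let $g$ be an algebraic metric asymptotic to $g_0$. By Remark~\ref{remark: twistor spaces}, its twistor space $Z$ is obtained by gluing two Poisson deformations $Z_0\to U_0$ and $Z_\infty\to U_\infty$ of $Y$ by an algebraic map. By universality, each of them is the pullback of $\mathcal{Y}$ along a unique classifying map $s_0\colon U_0\to\mathcal{C}$ (respectively $s_\infty$). Concretely, $s_0(\zeta)$ is the period of $\omega_\zeta=(\omega_J+i\omega_K)+2\zeta\omega_I-\zeta^2(\omega_J-i\omega_K)$.

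The key step is to show three things:
\begin{itemize}
\item[(a)] $s_0$ and $s_\infty$ are polynomials of degree $\le 2$ satisfying $s_\infty(\zeta^{-1})=\zeta^{-2}s_0(\zeta)$, so that together they define a section $s$ of $\mathcal{C}(2)$;
\item[(b)] the gluing of $Z_0$ and $Z_\infty$ coincides with the gluing used to build $\mathcal{Y}(1)$;
\item[(c)] $\tau$ corresponds to $\tau_P$, so that $s$ is real.
\end{itemize}

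For (b), I would use the asymptotic condition $\|g-g_0\|_{g_0}\to 0$. Rescaling by $\lambda_r$ and letting $r\to\infty$, the twistor space of $g$ should degenerate to that of $g_0$. Since the gluing maps are algebraic and the rescaling acts with nonnegative weights, the gluing of $Z$ must agree with the cone gluing, i.e.\ with the $\mathbb{C}^*$-twist. Given (b), the degree bound in (a) follows from weight considerations. Reality in (c) then follows from the uniqueness of lifts of real structures. Uniqueness of $s$ follows from the uniqueness of classifying maps, or equivalently from the fact that $s$ is determined by the periods.

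\emph{Main obstacle.} I expect the main difficulty to be passing from the merely $C^0$, rate-free asymptotic condition to the algebraic statement that the gluing map of $Z$ equals the conical gluing. My first attempt would be a scaling-limit argument. Pull back by $\lambda_r$ and use that algebraic maps are rigid: an algebraic family of gluings which converges to the cone gluing on an open set must equal it modulo the deformation directions recorded by $s$.
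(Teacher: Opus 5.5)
The paper does not prove this statement; it is imported from \cite[Theorem~3.40]{Kotani_26_PTM}. There is therefore no in-paper argument to compare against, and your outline has to stand on its own. It has a genuine gap exactly where you flag it: (b) is the whole content of part (2), and you offer only a heuristic for it.

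Worse, for the $\mathcal{Y}(1)$ you actually write down (two copies of $\mathcal{Y}$ glued by $\lambda_{\zeta^{-1}}$), (a) and (b) already fail for Eguchi--Hanson, for a general reason. The error enters when you apply universality over $U_\infty$. The fibre of $Z$ over $\infty$ is $(Y,-I)=\overline{Y}$, so $\mathcal{Y}$ can classify $Z|_{U_\infty}$ only after you identify $\overline{Y}$ with $Y$ by an antiholomorphic automorphism of $Y$. Such a map reverses the complex orientation of every compact curve.

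Concretely, take Eguchi--Hanson with periods $([\omega_I],0,0)$, exceptional sphere $S$, and $A=\int_S\omega_I>0$. From $(\omega_J+i\omega_K)+2u\omega_I-u^2(\omega_J-i\omega_K)$:
\begin{itemize}
\item the compact curve in the fibre over $0$ has period $2uA$ in nearby fibres;
\item the compact curve in the fibre over $\infty$ is $S$ with its $(-I)$-orientation, and has period $-2vA$ in the $U_\infty$ trivialization ($v=1/u$).
\end{itemize}
So the classifying maps satisfy $s_\infty(\zeta^{-1})=-\zeta^{-2}s_0(\zeta)$, contradicting (a).

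Now compare with your $\mathcal{Y}(1)$. Here $D=\{0\}$, so any $Z_s\subset\mathcal{Y}(1)$ with non-affine fibres over $0$ and $\infty$ has $s(u)=2uc$. With $E\subset\mathcal{Y}_0=Y$ the exceptional curve, the corresponding periods are $2u\langle E,c\rangle$ and $2v\langle E,c\rangle$. An isomorphism of twistor models preserves the relative forms and carries the unique compact curve of each singular fibre to the unique one, preserving complex orientation. This would give $A=\langle E,c\rangle=-A$, which is impossible. The underlying fact is standard: a holomorphic curve and its conjugate over the antipodal point are opposite in $H_2$ of a twistor space, which is also what makes ALE twistor spaces non-K\"ahler. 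No rescaling or rigidity argument for the gluing can get around it.

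The same orientation issue undermines your construction of $\tau_P$ in part (1).
\begin{itemize}
\item The universal property of $\mathcal{Y}$ concerns Poisson deformations of $Y$, not of $\overline{Y}$. So lifting the cone's real structure $\tau_X$ to $\mathcal{Y}$ first requires lifting it to $Y$ itself.
\item That lift can fail. Take $T^*\mathbb{P}^n\to\overline{O_{\min}}$ with $n\ge 2$, realized as $Y_\beta$. The quaternionic structure maps $\mu^{-1}(\beta,0,0)$ to $\mu^{-1}(-\beta,0,0)$, so $\tau_X$ carries $Y_\beta$ to its Mukai flop $Y_{-\beta}$ and has no lift to $Y_\beta$.
\item Even when a lift exists, lifts through $q$ are determined only up to the Namikawa--Weyl group. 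Different choices change which sections of $\mathcal{C}(2)$ are real, so ``lifts uniquely'' is not justified.
\end{itemize}

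The ingredient missing from both parts is a mechanism by which, near the antipodal point, the family is the conjugate (flopped) simultaneous resolution rather than the same $\mathcal{Y}$. Your untwisted gluing has no such mechanism. Before attempting (b), check precisely how \cite{Kotani_26_PTM} defines $\mathcal{Y}(1)$, $\tau_P$ and ``isomorphic as a twistor model''.
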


\begin{remark}
    This theorem implies that the principal twistor model (PTM) completely determines the structure of the twistor space corresponding to the asymptotic hyperkähler metric, except for the family of twistor lines.
\end{remark}

Applying the universality of Theorem \ref{thm:univ of PTM} to elements of the moduli space of algebraic asymptotic hyperkähler structures, we deduce the following injectivity proposition.
\begin{proposition}[{\cite[Cor.~4.12]{Kotani_26_PTM}}] \label{prop:PTM and period map}
    Under Setup~\ref{setup:general case}, the map $\Psi \colon \mathcal{M} \to H^0(\mathbb{P}^1, \mathcal{C}(2))^{\sigma_2}$, sending each class $m \in \mathcal{M}$ to its uniquely determined real section $s$ (cf.~Theorem~\ref{thm:univ of PTM}), is injective.
\end{proposition}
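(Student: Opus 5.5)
The plan is to show that $\Psi$ is well-defined and then that a real section $s$ determines the class $m\in\mathcal{M}$ up to isomorphism. Well-definedness is the uniqueness clause of Theorem~\ref{thm:univ of PTM}(2). The section is unique as a real section for which $Z\simeq Z_s$ as twistor models. Isomorphic hyperkähler structures have isomorphic twistor models, so $\Psi$ factors through isomorphism classes.

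For injectivity, suppose $m_1=(Y,g_1,I_1,J_1,K_1)$ and $m_2=(Y,g_2,I_2,J_2,K_2)$ satisfy $\Psi(m_1)=\Psi(m_2)=s$. Their twistor spaces $Z_1,Z_2$ are then both isomorphic, as twistor models, to the same pullback $(Z_s,\tau_s,\omega_s)$ of Proposition~\ref{prop:pullback twistor model}. By Remark~\ref{remark: twistor spaces}, each $m_i$ corresponds to a family of real twistor lines $\{\ell^{(i)}_x\}_{x\in Y}$ in $Z_s$. The task is therefore to show that these two families coincide. Once they do, the twistor correspondence \cite{HKLR} yields an isomorphism $m_1\simeq m_2$.

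I would argue as follows.
\begin{enumerate}
\item Both families consist of real holomorphic sections of $Z_s\to\mathbb{P}^1$ whose normal bundle is $\mathcal{O}(1)^{\oplus 2n}$.
\item Such sections form a real-analytic (in fact algebraic) family that is locally complete, by Kodaira's deformation theory. Since $H^1(\mathbb{P}^1,\mathcal{O}(1)^{2n})=0$, the space of real sections near any given line is a manifold of real dimension $4n$.
\item Hence each family $\{\ell^{(i)}_x\}$ is an open subset of the space of all real twistor lines, and any two such families agree on their overlap.
\item To see that the two families actually overlap, and are in fact equal, I would use the asymptotic condition. Near infinity both metrics converge to $g_0$, so far out along the $\mathbb{R}_{>0}$-orbits the twistor lines of $g_1$ and $g_2$ approach the lines of the cone twistor space. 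This forces the two families to share a common component containing the lines near infinity.
\item Connectedness of $Y$, together with completeness of each family (each foliates $Z_s$), then gives equality of the two families.
\end{enumerate}

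I expect the main obstacle to be the last step: ruling out two distinct foliations of $Z_s$ by real twistor lines, both compatible with the asymptotics. My first attempt is the observation that a real twistor line through a point of the fiber over $\zeta\in\mathbb{P}^1$ is determined by a point of the real $4n$-dimensional parameter space. Both families give diffeomorphisms from $Y$ onto a union of connected components of this space. Once the components match near infinity, the diffeomorphisms agree wherever they overlap, and an open-closed argument extends the agreement to all of $Y$.

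Since this statement is cited as \cite[Cor.~4.12]{Kotani_26_PTM}, the intended proof is presumably short, deferring to the uniqueness of twistor lines established there. I would mirror that structure and invoke the relevant lemma from \cite{Kotani_26_PTM} at the last step.
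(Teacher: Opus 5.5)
Your reduction is sound and follows the route the paper points to. The paper gives no argument of its own: it cites \cite[Cor.~4.12]{Kotani_26_PTM} and remarks that the asymptotic condition constrains the twistor lines in $Z_s$. $\Psi$ is well defined by the uniqueness in Theorem~\ref{thm:univ of PTM}. Let $\mathcal{S}$ be the space of real sections of $Z_s$ with normal bundle $\mathcal{O}(1)^{\oplus 2n}$. Since $H^1(\mathbb{P}^1,\mathcal{O}(1)^{\oplus 2n})=0$, $\mathcal{S}$ is a real $4n$-manifold and $x\mapsto\ell^{(i)}_x$ is an open embedding $Y\to\mathcal{S}$. Because each family foliates $Z_s$, evaluation at any $\zeta$ is a diffeomorphism $Y\to(Z_s)_\zeta$, so a limit of lines $\ell^{(i)}_{x_k}$ is again some $\ell^{(i)}_x$ and the image is closed. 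Each family is therefore a whole connected component of $\mathcal{S}$, and injectivity reduces exactly to showing the two families lie in the \emph{same} component. The claim in your step 3 that two families ``agree on their overlap'' is empty; the component statement is what carries the argument.

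The gap is step 4, which is where all the content lies. First, saying that the lines ``approach the cone lines'' does not put $\ell^{(1)}_{rx_0}$ and $\ell^{(2)}_{rx_0}$ in the same component. You would have to rescale by the $\mathbb{R}_{>0}$-action, which carries $Z_s$ to $Z_{r^{-2}s}$, and show that both rescaled lines converge to the $g_0$-line through $x_0$ in $Z_0$. Then for $r\gg 1$ both lie in one connected Kodaira chart of the relative space of real sections of $\mathcal{Y}(1)\to\mathcal{C}(2)$ over a neighbourhood of $0$, and rescaling back joins them by a path in $\mathcal{S}$. Second, and more seriously, that convergence does not follow from $\|g_i-g_0\|_{g_0}\to 0$. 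Where $\ell^{(i)}_x$ sits inside $Z_s$ is dictated by the twistor-model isomorphism $Z_i\simeq Z_s$. A $C^0$ condition on the metric alone does not control how this biholomorphism (equivalently, the complex structures $I^{(i)}_\zeta$ seen through it) behaves at infinity. You need the isomorphism produced by the universality theorem to be compatible with the conical structure near infinity, and that is precisely what \cite[Cor.~4.12]{Kotani_26_PTM} supplies. Your proposal is therefore a valid reduction to the cited result, not an independent proof of it. Deferring step 4 to that reference, as you suggest, is exactly what the paper does.
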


\begin{remark}
 The asymptotic condition strongly constrains the structure of the twistor lines on $Z_s$, from which the injectivity of $\Psi$ follows.
 Also, note that by the isomorphism $\mathcal{C}\simeq H^2(Y;\mathbb{C})$ induced by the period map of the Poisson deformation (cf.~Remark~\ref{remark:period map for Poisson deformation}), we have
 \begin{equation}
   H^0(\mathbb{P}^1,\mathcal{C}(2))^{\sigma_2}\simeq H^2(Y;\mathbb{R})\otimes\mathbb{R}^3.
 \end{equation}
\end{remark}

%% file: source/Period_Map_and_Period_Domain.tex
\section{Period Map and Period Domain}\label{sec:period map and period domain}

In this section, under Setup~\ref{setup:general case}, we define the period map and the period domain for the moduli space of algebraic asymptotic hyperkähler structures. We show that the injectivity of the period map follows from the universality of the principal twistor model (PTM), and verify that its image is contained in the period domain defined via the wall of the universal Poisson deformation.

\subsection{Definition and Injectivity of the Period Map}

First, we define the period map from the moduli space to the space of triples of cohomology classes, and show that its injectivity immediately follows from the universality of the PTM \cite{Kotani_26_PTM}.

\medskip
The period map is defined as follows.
\begin{definition}\label{def:period map}
 Assume Setup~\ref{setup:general case}. For each element $m\in \mathcal{M}$, let $(Y,g,I,J,K)$ be a representative hyperkähler structure for $m$. Let $\omega_A$ be the Kähler form associated with the complex structure $A\in \{I,J,K\}$ of the hyperkähler metric $g$. Let $[\omega_A]\in H^2(Y;\mathbb{R})$ denote the period (i.e., the cohomology class) of $\omega_A$. Then, the period map $p$ is defined by:
 \begin{align*}
   p \colon \mathcal{M} &\to H^2(Y;\mathbb{R})\otimes\mathbb{R}^3\\
   m & \mapsto ([\omega_I],[\omega_J],[\omega_K])
 \end{align*}
\end{definition}

We prove the following proposition.

\begin{proposition}[{\cite[Theorem~3.40, Cor.~4.12]{Kotani_26_PTM}}]\label{prop:inj of period map}
 The period map $p \colon \mathcal{M}\to H^2(Y;\mathbb{R})\otimes\mathbb{R}^3$ is well-defined and injective.
\end{proposition}

\begin{proof}
 Let $m\in \mathcal{M}$, and let $\pi \colon Z\to\mathbb{P}^1$ be the twistor space corresponding to a representative hyperkähler structure $(Y,g,I,J,K)$ for $m$. Then, the relative holomorphic symplectic $2$-form $\Omega \in \Gamma(Z,\wedge^2{T'}^*_\pi\otimes\mathcal{O}(2))$ on $Z$ can be written as follows: for an affine coordinate $u\in\mathbb{C}_u \subset \mathbb{P}^1$,
 \[\Omega \coloneqq (\omega_J+i\omega_K)+2u\omega_I- u^2(\omega_J-i\omega_K).\]
 By taking the period of $\Omega$, we obtain a real section $s\in H^0(\mathbb{P}^1,\mathcal{C}(2))^{\sigma_2}$ of the vector bundle $\mathcal{C}(2) \coloneqq \mathcal{C}\otimes\mathcal{O}(2)$ defined by
 \[s \coloneqq [\omega_J+i\omega_K]+2u[\omega_I]- u^2[\omega_J-i\omega_K].\]
 Here, $\mathcal{C} \coloneqq H^2(Y;\mathbb{C})$.
 The correspondence $m\mapsto s$ obtained in this way is exactly the map $\Psi$ in Proposition~\ref{prop:PTM and period map}.
 Since the coefficients of the real section $s$ and the periods $([\omega_I],[\omega_J],[\omega_K])\in H^2(Y;\mathbb{R})\otimes\mathbb{R}^3$ are in one-to-one correspondence, the period map $p$ is well-defined and injective.
\end{proof}

\subsection{Period Domain}

Next, we introduce the period domain $\Omega$. Geometrically, $\Omega$ arises as a chamber separated by the wall of the universal Poisson deformation discussed in \S\ref{subsec:PD}. From the definition of this wall, it immediately follows that the image of the period map is contained in this domain.

\medskip
The period domain is defined as follows.
\begin{definition}\label{def:period domain}
 Under Setup~\ref{setup:general case}, let $D$ be the wall on $H^2(Y;\mathbb{C})$ as in Definition~\ref{def:wall}, and let its real part be given by the union of real hyperplanes $D_\mathbb{R} = \bigcup_{\alpha} H_{\alpha,\mathbb{R}} \subset H^2(Y;\mathbb{R})$.
 We define the corresponding wall in $H^2(Y;\mathbb{R})\otimes\mathbb{R}^3$ as
 \[
     D_\mathbb{R}\otimes\mathbb{R}^3 \coloneqq \bigcup_{\alpha} H_{\alpha,\mathbb{R}} \otimes \mathbb{R}^3.
 \]
 Then, we define the period domain $\Omega$ as the complement of this wall~(i.e.,~the chamber):
 \[
     \Omega \coloneqq (D_\mathbb{R}\otimes\mathbb{R}^3)^c \subset H^2(Y;\mathbb{R})\otimes\mathbb{R}^3.
 \]
\end{definition}

We prove the following proposition.
\begin{proposition}\label{prop:inclusion to period domain}
 Consider the setup of Definition~\ref{def:period map}, and let $p \colon \mathcal{M}\to H^2(Y;\mathbb{R})\otimes\mathbb{R}^3$ be the period map.
 Then, the image $p(\mathcal{M})$ is contained in the period domain $\Omega$.
\end{proposition}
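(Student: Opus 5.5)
To show $p(\mathcal{M})\subset\Omega$, fix $m\in\mathcal{M}$ with representative $(Y,g,I,J,K)$ and suppose, for contradiction, that $p(m)=([\omega_I],[\omega_J],[\omega_K])\in D_\mathbb{R}\otimes\mathbb{R}^3$. By Definition~\ref{def:period domain} there is then an index $\alpha$ with $[\omega_A]\in H_{\alpha,\mathbb{R}}$ for every $A\in\{I,J,K\}$. In other words, there is an algebraic curve $\Sigma_\alpha\subset Y_0$ with
\[
\int_{\Sigma_\alpha}\omega_I=\int_{\Sigma_\alpha}\omega_J=\int_{\Sigma_\alpha}\omega_K=0 .
\]
Here I read $H_{\alpha,\mathbb{R}}$ as the set of real classes annihilated by pairing with $[\Sigma_\alpha]$. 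This is how the real part of the complex hyperplane $H_\alpha$ should be understood under the identification $\mathcal{C}\simeq H^2(Y;\mathbb{C})$ of Remark~\ref{remark:period map for Poisson deformation}.

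The plan is to contradict this using the non-degeneracy of $g$ together with the twistor description from the proof of Proposition~\ref{prop:inj of period map}. For each $u$, the section $s(u)=[\omega_J+i\omega_K]+2u[\omega_I]-u^2[\omega_J-i\omega_K]$ is the period of the holomorphic symplectic form on the twistor fiber $Z_u$. By Theorem~\ref{thm:univ of PTM}, this fiber is isomorphic to the Poisson-deformation fiber $\mathcal{Y}_{s(u)}$. The vanishing above gives $\langle\Sigma_\alpha,s(u)\rangle=0$ for all $u$, so $s(u)\in H_\alpha\subset D$. By Proposition~\ref{prop:meaning of wall}, every fiber $\mathcal{Y}_{s(u)}$ is then non-affine.

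Now pick any unit complex structure $A=aI+bJ+cK$ of the hyperkähler family. I would use the argument in the converse direction of the proof of Proposition~\ref{prop:meaning of wall}: since $\Sigma_\alpha$ pairs to zero with the real and imaginary parts of the $A$-holomorphic symplectic form, its class is of type $(1,1)$ for $A$. Non-affineness also provides a compact $A$-holomorphic curve $C$ in the fiber. To get the contradiction I want the class of this curve to be exactly $[\Sigma_\alpha]$, or a nonzero multiple of it, and that is the delicate point. If it is, then Wirtinger gives
\[
0<\operatorname{area}_g(C)=\int_C\omega_A=a\int_C\omega_I+b\int_C\omega_J+c\int_C\omega_K=0,
\]
which is impossible.

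I expect this to be the main obstacle: making sure the holomorphic curve produced by non-affineness has homology class proportional to $[\Sigma_\alpha]$, rather than some other class. The first thing I would try is to choose the complex structure $A$ adapted to $\Sigma_\alpha$. Alternatively, I would argue directly within the single fiber where $[\omega_A]$, the Kähler class, is nonzero on every effective curve, noting that every $\omega_A$ vanishes on $[\Sigma_\alpha]$. If a cleaner route is wanted, one can simply use that $\omega_I$ is a Kähler form for $I$, and that under the identification of $(Y,I)$ with the fiber over $s(0)$ the curve $\Sigma_\alpha$ (or a holomorphic representative of its class) is $I$-holomorphic. Positivity of $\int\omega_I$ then contradicts $\int_{\Sigma_\alpha}\omega_I=0$.
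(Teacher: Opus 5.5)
The gap is the step you flag yourself and then leave open. Knowing that every twistor fiber $(Y,A_u)\cong\mathcal{Y}_{s(u)}$ is non-affine gives you some compact $A_u$-holomorphic curve $C$. Nothing in your argument makes $[C]$ a multiple of $[\Sigma_\alpha]$, and within a single fiber there is no contradiction: $\langle [C],s(u)\rangle=0$ and $\int_C\omega_{A_u}>0$ are compatible.

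Your fallbacks do not repair this. The single-fiber version needs $[\Sigma_\alpha]$, or a multiple of it, to be effective for $A$. The ``cleaner route'' simply asserts that $\Sigma_\alpha$, or a holomorphic representative of its class, is $I$-holomorphic. That last route is exactly the paper's proof. The paper works only with $I$ and obtains the $I$-holomorphicity of $\Sigma_\alpha$ by rerunning the converse half of the proof of Proposition~\ref{prop:meaning of wall}: vanishing of $\int_{\Sigma_\alpha}(\omega_J\pm i\omega_K)$ makes the class of type $(1,1)$ for $I$. Then $\int_{\Sigma_\alpha}\omega_I=0$ contradicts the non-degeneracy of $g$.

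Your caution about that step is justified. $\Sigma_\alpha$ is holomorphic for the algebraic structure of $Y$ (the fiber over $c=0$), not for $I$, and a class of type $(1,1)$ need not carry an effective holomorphic representative. But the proposal as written supplies neither that argument nor a substitute.

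Your twistor route can be completed without identifying any class, by a countability argument:

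1. For each $u\in\mathbb{C}$ the fiber $(Y,A_u)$ is non-affine. As in the proof of Proposition~\ref{prop:meaning of wall}, its affinization then has a positive-dimensional fiber, so there is a compact $A_u$-holomorphic curve $C_u\subset Y$.
2. The form $\Omega_u=(\omega_J+i\omega_K)+2u\omega_I-u^2(\omega_J-i\omega_K)$ is of type $(2,0)$ for $A_u$. So it restricts to zero on $C_u$, giving $\langle [C_u],s(u)\rangle=0$, while $\int_{C_u}\omega_{A_u}>0$.
3. Since $H_2(Y;\mathbb{Z})$ is countable, a single class $\gamma$ equals $[C_u]$ for infinitely many $u$.
4. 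Write $a=\langle\gamma,[\omega_J+i\omega_K]\rangle$ and $b=\langle\gamma,[\omega_I]\rangle\in\mathbb{R}$. The quadratic polynomial $\langle\gamma,s(u)\rangle=a+2bu-\bar{a}u^2$ has infinitely many zeros, so $a=b=0$.
5. Hence $\gamma$ annihilates $[\omega_I],[\omega_J],[\omega_K]$, and therefore $[\omega_{A_u}]$. This contradicts $\int_{C_u}\omega_{A_u}>0$.

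Compared with the paper, this uses Proposition~\ref{prop:meaning of wall} only as a black box (every twistor fiber is non-affine). It does not reopen that proof to make the specific curve $\Sigma_\alpha$ holomorphic for $I$. The cost is passing through the whole twistor family rather than working with the single complex structure $I$.
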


\begin{proof}
 Suppose for a contradiction that there exists a hyperkähler structure $(Y,g,I,J,K) \in \mathcal{M}$ whose periods $([\omega_I],[\omega_J],[\omega_K])$ are contained in the wall $D_\mathbb{R}\otimes\mathbb{R}^3$.
 Then, by the definition of the wall $D$ (Def.~\ref{def:wall}), there exists a compact curve $\Sigma_\alpha$ on $Y_0\subset Y$ such that for each $A\in\{I,J,K\}$,
 \[\langle\Sigma_{\alpha},\omega_A\rangle \coloneqq \int_{\Sigma_{\alpha}}\omega_A=0.\]
 In particular, for the $I$-holomorphic symplectic $2$-form $\omega_\mathbb{C} \coloneqq \omega_J+i\omega_K$, we have $\langle\Sigma_{\alpha},\omega_\mathbb{C}\rangle=0$. As in the proof of Proposition~\ref{prop:meaning of wall}, this implies that $\Sigma_{\alpha}$ is an $I$-holomorphic curve on $Y$.
 However, the volume of $\Sigma_\alpha$ with respect to the Kähler form $\omega_I$ is zero, since $\langle\Sigma_{\alpha},\omega_I\rangle=0$.
 This implies that $g$ degenerates on $\Sigma_{\alpha}$, which contradicts the non-degeneracy of $g$.
 Therefore, $p(\mathcal{M})\subset \Omega$.
\end{proof}

%% file: source/HK_Quot_and_Surj_of_Period_Map.tex
\section{Hyperkähler Quotients and Surjectivity of the Period Map}\label{sec:HK quot and surj of the period map}
  In this section, we first provide the standard setup for hyperkähler quotients and describe them algebraically via GIT quotients. This algebraic description allows us to clarify their connection to Poisson deformations. After establishing the necessary asymptotic properties of the metrics, we prove a Torelli-type theorem for the moduli space of algebraic asymptotic hyperkähler structures on hyperkähler quotients.

  \medskip
  The setup for hyperkähler moment maps treated in this and subsequent sections is as follows:
\begin{setup}\label{setup:HK quot}
  We naturally regard the quaternionic vector space $\mathbb{H}^n$ as a hyperkähler manifold $M=(\mathbb{H}^n, g_{\mathrm{std}}, I, J, K)$. Let $G\subset Sp(n)$ be a compact Lie group. We define the standard hyperkähler moment map $\mu \colon M \to\mathfrak{g}^*\otimes\mathbb{R}^3$ for $G$ by
  \[\langle\mu(x), A \rangle = \frac{1}{2}x^*Ax \in \mathrm{Im}(\mathbb{H})\simeq \mathbb{R}^3,\]
  where $x\in\mathbb{H}^n=M$ and $A\in\mathfrak{g}\subset \mathfrak{sp}(n)$.
  Also, let $\mathfrak{z}^* \coloneqq (\mathfrak{g}^*)^G$, and for each $\zeta\in \mathfrak{z}^*\otimes\mathbb{R}^3$, we define the hyperkähler quotient by $X_\zeta \coloneqq \mu^{-1}(\zeta)/G$.
\end{setup}

\begin{remark}
  If the Lie group $G$ acts on the non-empty fiber $\mu^{-1}(\zeta)$ freely, then the real dimension of the hyperkähler quotient $X_\zeta$ is $4(n - \dim G)$.
\end{remark}

\begin{remark}
  The hyperkähler moment map for $G$ is uniquely determined up to a shift $\mu-\zeta_0$ by a constant $\zeta_0 \in \mathfrak{z}^* \otimes \mathbb{R}^3$.
\end{remark}

\begin{remark}\label{rem:natural metric on HK quot}
  Let $g_\zeta$ be the natural hyperkähler metric induced on the hyperkähler quotient $X_\zeta$. Let $i \colon \mu^{-1}(\zeta) \to M$ be the inclusion map and $q \colon \mu^{-1}(\zeta) \to X_\zeta$ the natural projection. Then, the following relation holds \cite{HKLR}:
  \begin{equation}
    i^*g_{\mathrm{std}} = q^*g_\zeta. \label{eq:HK quot property}
  \end{equation}
\end{remark}

In this section, we impose three standing assumptions.

\begin{assumption}\label{assump:standing_assumptions}
  Throughout Section \ref{sec:HK quot and surj of the period map}, we assume the following three conditions:
  \begin{enumerate}
    \item The stable locus for the central fiber of the complex moment map is non-empty, i.e., $\mu_\mathbb{C}^{-1}(0)^s \neq \emptyset$.
    \item There exists a generic lattice point $\beta \in \mathfrak{z}^*_\mathbb{Z}$ such that the group $G_\mathbb{C}$ acts freely on the stable locus $M^{\beta-s}$ (cf.~\S \ref{subsec:GIT quot}).
    \item For the generic lattice point $\beta$ above, the Kirwan map $\kappa_{\beta}$ for the hyperkähler quotient $Y_{\beta} \coloneqq \mu^{-1}(\beta,0,0)/G \simeq \mu_\mathbb{C}^{-1}(0)/\!/_{\beta} G_\mathbb{C}$ is surjective (cf.~\S \ref{subsec:PD of HK quot}).
  \end{enumerate}
\end{assumption}

\begin{remark}\label{rem:roles_of_assumptions}
  The roles of these assumptions are summarized as follows:
  Assumptions (1) and (2) guarantee that the hyperkähler quotient $Y_\beta$ is a projective crepant resolution of the central quotient $X_0$.
  Assumptions (2) and (3) guarantee the existence of a natural period-surjective Poisson deformation $\mathcal{Y}^\beta$ of $Y_\beta$.
  Together, these assumptions are essential for proving the surjectivity of the period map in the main theorem.
\end{remark}

\subsection{Hyperkähler Quotients and their Algebraic Descriptions}\label{subsec:GIT quot}\quad

In this subsection, we prepare an algebraic description of hyperkähler quotients via GIT quotients using the Kempf--Ness type theorem for affine varieties \cite{King94}. Based on this description, we show that generic hyperkähler quotients $Y_{\beta}$ naturally provide projective crepant resolutions of the central quotient $X_0$. This algebraic perspective plays a key role in connecting hyperkähler quotients to Poisson deformations in the subsequent subsection.

\subsubsection{Stability and GIT Quotients}
We briefly recall the notion of stability and GIT quotients introduced by King \cite{King94}.
First, we define the character lattice $\mathfrak{z}^*_\mathbb{Z}$ on $\mathfrak{z}^*$.

\begin{definition}\label{def:character lattice}
  Consider Setup~\ref{setup:HK quot}. Let $G_\mathbb{C}$ be the complexification of the compact Lie group $G$. Consider the character group $\mathrm{Hom}(G_\mathbb{C},\mathbb{C}^*)$ of the Lie group $G_\mathbb{C}$.
  Then, there is a natural inclusion map $\mathrm{Hom}(G_\mathbb{C},\mathbb{C}^*) \hookrightarrow \mathfrak{z}^*$ associating to each character $\chi \colon G_\mathbb{C}\to\mathbb{C}^*$ the element $\beta=(2\pi i)^{-1}d\chi \in\mathfrak{z}^*$.
  The image of this map is denoted by $\mathfrak{z}^*_\mathbb{Z} \subset\mathfrak{z}^*$, and we call it the character lattice of the Lie group $G$.
\end{definition}

We recall the notion of GIT quotients as follows.

\begin{definition}[{cf.~\cite[Def.~2.1]{King94}}]\label{def:b-ss}
  Consider Setup~\ref{setup:HK quot}. Let $V\subset M$ be a $G_\mathbb{C}$-invariant $I$-holomorphic subvariety. Let $R=\mathbb{C}[V]$ be its coordinate ring, and let $\chi \colon G_\mathbb{C}\to\mathbb{C}^*$ be the character corresponding to a lattice point $\beta \in \mathfrak{z}^*_\mathbb{Z}$ (i.e., $d\chi=2\pi i \beta$).
  For each $n\in\mathbb{Z}_{\geq 0}$, we define a subspace $R_{\chi^n}^{G_\mathbb{C}}$ of $R$ as follows:
  \[R_{\chi^n}^{G_\mathbb{C}} \coloneqq \{f\in R\mid f(g\cdot x)=\chi(g)^nf(x)\ (\text{for all }g\in G_\mathbb{C}, x\in V)\}.\]

  We define the stability for each point in $V$ as follows.
  \begin{enumerate}[itemsep=\medskipamount]
    \item A point $x \in V$ is \textit{$\beta$-semistable} if there exist an integer $n \geq 1$ and a function $f\in R_{\chi^n}^{G_\mathbb{C}}$ such that $f(x)\neq 0$.
    \item A point $x \in V$ is \textit{$\beta$-stable} if there exist an integer $n \geq 1$ and a function $f\in R_{\chi^n}^{G_\mathbb{C}}$ satisfying the following:
    \begin{enumerate}[label=(\roman*)]
      \item $f(x) \neq 0$.
      \item The action of $G_\mathbb{C}$ on the affine open set $V_f \coloneqq \{ y \in V \mid f(y) \neq 0 \}$ is closed (i.e., all $G_\mathbb{C}$-orbits in $V_f$ are closed sets).
      \item The dimension of the isotropy subgroup of $G_\mathbb{C}$ at $x$ is $0$ (i.e., $\dim (G_\mathbb{C})_x = 0$).
    \end{enumerate}
  \end{enumerate}
  We denote the $\beta$-semistable subset and the $\beta$-stable subset of $V$ by $V^{\beta-ss}$ and $V^{\beta-s}$, respectively.
  Also, we define the $\beta$-GIT quotient $V/\!/_{\beta}G_\mathbb{C}$ as follows:
  \[V/\!/_{\beta}G_\mathbb{C} \coloneqq \operatorname{Proj} \left(\bigoplus_{n=0}^\infty R_{\chi^n}^{G_\mathbb{C}}\right).\]
  Here, the right-hand side represents the projective scheme of the graded ring.
\end{definition}

\begin{remark}
  When $\beta=0$, we denote the semistable (resp.~stable) set by $V^{ss}$ (resp.~$V^s$). In this case, the GIT quotient coincides with the affine quotient $V/\!/G_\mathbb{C}$.
\end{remark}

\begin{remark}\label{rem:geometric meaning of GIT quot}
  The geometric properties of the $\beta$-GIT quotient $V/\!/_{\beta}G_\mathbb{C}$ are as follows (cf.~{\cite[p.~518]{King94}}).
  \begin{enumerate}[itemsep=\medskipamount]
    \item The $\beta$-GIT quotient $V/\!/_{\beta}G_\mathbb{C}$ constructed in Definition~\ref{def:b-ss} geometrically gives a good quotient of the semistable set $V^{\beta-ss}$. That is, there exists a natural surjective map $\varphi \colon V^{\beta-ss} \to V/\!/_{\beta}G_\mathbb{C}$, and the points of $V/\!/_{\beta}G_\mathbb{C}$ are in one-to-one correspondence with the GIT equivalence classes, which identify points where the closures of the orbits in $V^{\beta-ss}$ intersect each other.

    \item Furthermore, by the restriction of $\varphi$, the geometric quotient $V^{\beta-s}/G_\mathbb{C}$ of the stable set is naturally embedded as an open set of $V/\!/_{\beta}G_\mathbb{C}$.
  \end{enumerate}
\end{remark}

To determine stability, we recall the well-known Hilbert--Mumford criterion:

\begin{proposition}[{\cite[Prop.~2.5]{King94}}]\label{prop:Hilbert-Munford criterion}
  Under Setup~\ref{setup:HK quot}, let $V\subset M$ be a $G_\mathbb{C}$-invariant $I$-holomorphic subvariety.
  Fix a lattice point $\beta \in \mathfrak{z}^*_\mathbb{Z}$, and let $\chi \colon G_\mathbb{C} \to\mathbb{C}^*$ be the corresponding character (i.e., $d\chi=2\pi i\beta$).
  For any point $x\in V^{\beta-ss}$, the following conditions are equivalent:
  \begin{enumerate}
    \item $x\in V^{\beta-s}$.
    \item There exists no non-trivial $1$-parameter subgroup $\lambda \colon \mathbb{C}^*\to G_\mathbb{C}$ such that the limit \mbox{$\displaystyle\lim_{t\to 0} \lambda(t)\cdot x$} exists in $V$ and $\langle\chi, \lambda\rangle=0$.
  \end{enumerate}
  Here, $\langle\chi, \lambda\rangle$ denotes the natural pairing, given by the weight of the composition $\chi\circ\lambda$.
\end{proposition}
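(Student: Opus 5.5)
The plan is to reduce the twisted stability notion of Definition~\ref{def:b-ss} to the classical (untwisted) Hilbert--Mumford criterion, following King's trick of adding one extra coordinate. Let $G_\mathbb{C}$ act on $\hat V \coloneqq V\times\mathbb{C}$ by
\[
g\cdot(x,z)\coloneqq\bigl(g\cdot x,\ \chi(g)^{-1}z\bigr).
\]
A function $f\in R^{G_\mathbb{C}}_{\chi^n}$ then corresponds to the $G_\mathbb{C}$-invariant function $\hat f(x,z)\coloneqq f(x)z^n$ on $\hat V$. Conversely, every invariant on $\hat V$ is a sum of such terms, obtained by expanding in powers of $z$. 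The first step is to translate stability into orbit geometry.
\begin{enumerate}
\item $x$ is $\beta$-semistable if and only if $\overline{G_\mathbb{C}\cdot(x,1)}\cap(V\times\{0\})=\emptyset$. This holds because invariants separate disjoint closed invariant sets of the affine variety $\hat V$.
\item $x$ is $\beta$-stable if and only if the orbit $G_\mathbb{C}\cdot(x,1)$ is closed in $\hat V$ and $\dim(G_\mathbb{C})_x=0$.
\end{enumerate}
Condition (2) is the analogue of King's Lemma~2.2. One direction is direct: if $G_\mathbb{C}$ acts closedly on $V_f$ and $f(x)\neq0$, then any limit point of $G_\mathbb{C}\cdot(x,1)$ has the form $(y,z)$ with $f(y)z^n=f(x)\neq0$. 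Hence $y\in V_f$, and closedness of orbits in $V_f$ forces $(y,z)$ to lie in the orbit. For the converse, the locus in $\hat V$ of closed orbits with finite stabilizers is open and saturated in the affine quotient map. An invariant $\hat f=f z^n$ cutting out a neighbourhood of $(x,1)$ inside this locus gives the required $f$.

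Next, compute the limits along a $1$-parameter subgroup $\lambda$. Writing $\lambda(t)\cdot(x,1)=(\lambda(t)\cdot x,\ t^{-\langle\chi,\lambda\rangle})$, the limit as $t\to0$ exists in $\hat V$ if and only if $\lim_{t\to0}\lambda(t)\cdot x$ exists in $V$ and $\langle\chi,\lambda\rangle\le0$. If $\langle\chi,\lambda\rangle<0$, the limit lies in $V\times\{0\}$, which contradicts semistability by item (1). So for $x\in V^{\beta-ss}$, the only $1$-parameter subgroups with a limit are those with $\langle\chi,\lambda\rangle=0$, and for these the limit is $(\lim_{t\to0}\lambda(t)\cdot x,\,1)$.

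For (1)$\Rightarrow$(2), suppose $\lambda$ is nontrivial with $\langle\chi,\lambda\rangle=0$ and $y\coloneqq\lim_{t\to0}\lambda(t)\cdot x$ exists. Then $(y,1)$ lies in the closure of the closed orbit of $(x,1)$, so $y=g\cdot x$ for some $g\in G_\mathbb{C}$. Since $y$ is fixed by $\lambda(\mathbb{C}^*)$, the conjugate $g^{-1}\lambda(\mathbb{C}^*)g$ lies in $(G_\mathbb{C})_x$. This contradicts $\dim(G_\mathbb{C})_x=0$.

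For (2)$\Rightarrow$(1), I use the classical Hilbert--Mumford/Kempf theorem on $\hat V$: the unique closed orbit in $\overline{G_\mathbb{C}\cdot(x,1)}$ is reached as $\lim_{t\to0}\lambda(t)\cdot(x,1)$ for some $1$-parameter subgroup $\lambda$. By the computation above, $\langle\chi,\lambda\rangle=0$. If the orbit of $(x,1)$ were not closed, $\lambda$ would be nontrivial with a limit existing in $V$, contradicting (2); so the orbit is closed. Matsushima's theorem then says the stabilizer of $(x,1)$ is reductive. If it had positive dimension, it would contain a nontrivial torus, hence a nontrivial $\lambda$ fixing $x$. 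Because $\lambda$ also fixes the coordinate $z=1$, we get $\langle\chi,\lambda\rangle=0$, and the limit trivially exists, again contradicting (2). The stabilizer of $(x,1)$ is $(G_\mathbb{C})_x\cap\ker\chi$, which has codimension at most one in $(G_\mathbb{C})_x$, so this step alone does not yet give $\dim(G_\mathbb{C})_x=0$. I would handle this through the torus argument applied to $(G_\mathbb{C})_x$ directly. If that group is positive-dimensional, its identity component, which is reductive after the closedness established above, contains a $1$-parameter subgroup. Either that subgroup pairs trivially with $\chi$, or one exhibits a $1$-parameter subgroup with $\langle\chi,\lambda\rangle<0$ whose limit exists, which contradicts semistability. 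Item (2) then gives stability.

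The main obstacle is the equivalence in item (2) between King's definition, which asks for an affine open $V_f$ on which all orbits are closed, and the orbit-closure formulation. Specifically, the openness of the locus of closed orbits with finite stabilizers needs care, and I would follow King's Lemma~2.2 argument using the affine quotient $\hat V\to\hat V/\!/G_\mathbb{C}$.
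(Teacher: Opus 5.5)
Your proposal is correct and is essentially King's own proof of \cite[Prop.~2.5]{King94}, which the paper cites without reproducing. It uses the twisted action on $V\times\mathbb{C}$, King's Lemma~2.2 translating $\beta$-(semi)stability into orbit-closure and stabilizer conditions for $(x,1)$, and then the classical Hilbert--Mumford criterion through $\lambda(t)\cdot(x,1)=(\lambda(t)\cdot x,\,t^{-\langle\chi,\lambda\rangle})$. The step from finiteness of $(G_\mathbb{C})_x\cap\ker\chi$ to $\dim(G_\mathbb{C})_x=0$ closes quickly. If the identity component of $(G_\mathbb{C})_x$ is one-dimensional, $\chi$ is nontrivial on it, so it is a copy of $\mathbb{C}^*$. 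That gives a $1$-parameter subgroup fixing $x$ with $\langle\chi,\lambda\rangle<0$, contradicting semistability.
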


\begin{remark}\label{rem:to stable locus}
  As an immediate consequence of Proposition~\ref{prop:Hilbert-Munford criterion}, any stable point $x\in V^s$ is trivially $\beta$-stable for any choice of $\beta \in \mathfrak{z}^*_\mathbb{Z}$. Therefore, we always have the inclusion $V^s\subset V^{\beta-s}$.
\end{remark}

Proposition~\ref{prop:Hilbert-Munford criterion} implies the following lemma.

\begin{lemma}\label{lemma:generic_beta and stability}
  Under Setup~\ref{setup:HK quot}, for a generic lattice point $\beta\in\mathfrak{z}^*_\mathbb{Z}$, we have $M^{\beta-ss}=M^{\beta-s}$.
\end{lemma}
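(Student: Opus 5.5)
The approach is to combine the Hilbert--Mumford criterion (Proposition~\ref{prop:Hilbert-Munford criterion}) with the finiteness of torus weights on $M=\mathbb{H}^n$. Fix a maximal torus $T\subset G$ with complexification $T_\mathbb{C}\subset G_\mathbb{C}$. Decompose $M$ (as an $I$-complex vector space $\mathbb{C}^{2n}$) into $T_\mathbb{C}$-weight spaces with weights $w_1,\dots,w_{2n}\in\mathfrak{t}^*_\mathbb{Z}$. Every one-parameter subgroup $\lambda$ of $G_\mathbb{C}$ is conjugate to one in $T_\mathbb{C}$. For $x$ written in weight coordinates, the limit $\lim_{t\to0}\lambda(t)\cdot x$ exists if and only if $\langle w_i,\lambda\rangle\ge 0$ for all $i$ in the weight support $S(x)$. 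Since $\beta\in\mathfrak{z}^*=(\mathfrak{g}^*)^G$ is conjugation invariant, the pairing $\langle\chi,\lambda\rangle$ is also unchanged under conjugation. Hence the whole question reduces to finitely many combinatorial data: the subsets $S\subset\{1,\dots,2n\}$ and the rational polyhedral cones $C_S=\{\lambda\in\mathfrak{t}:\langle w_i,\lambda\rangle\ge0\ (i\in S)\}$.

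Suppose $x\in M^{\beta-ss}\setminus M^{\beta-s}$. By Proposition~\ref{prop:Hilbert-Munford criterion} there is a nontrivial $\lambda$, which we may take in $T_\mathbb{C}$ after conjugating $x$, such that the limit exists and $\langle\chi,\lambda\rangle=0$. The limit point $y=\lim\lambda(t)\cdot x$ is again $\beta$-semistable, since semistable points have orbit closures in the semistable locus meeting it. The point $y$ is fixed by $\lambda$, and we may replace $y$ by the unique closed orbit in $\overline{G_\mathbb{C}y}\cap M^{\beta-ss}$. Its stabilizer $H$ is then reductive and positive dimensional. By the Kempf--Ness correspondence of \cite{King94}, this closed orbit meets $\mu_\mathbb{R}^{-1}(\beta)$ at a point $y'$ with $G$-stabilizer $K=H\cap G$ of positive dimension.

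For $A\in\mathfrak{k}$ we have $Ay'=0$, so $\langle\beta,A\rangle=\langle\mu_\mathbb{R}(y'),A\rangle=\tfrac12 y'^*Ay'=0$. Thus $\beta$ annihilates $\mathfrak{k}$, equivalently its orthogonal projection $\mathrm{pr}_\mathfrak{z}(\mathfrak{k})$. The stabilizer Lie algebras of points of $M$ fall into finitely many conjugacy classes: up to conjugacy, a torus in $\mathfrak{k}$ is the annihilator in $\mathfrak{t}$ of $\{w_i\}_{i\in S(y')}$, and there are finitely many such $S$. Therefore the set of bad $\beta$ is contained in the finite union $\bigcup_S \mathrm{Ann}(\mathrm{pr}_\mathfrak{z}(\mathfrak{k}_S))$. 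For any class with $\mathrm{pr}_\mathfrak{z}(\mathfrak{k}_S)\neq0$, this set is a proper rational hyperplane arrangement in $\mathfrak{z}^*$. A generic lattice point avoids it, and for such $\beta$ no strictly semistable point can arise from that class.

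The main obstacle is the remaining class, where the stabilizer torus satisfies $\mathfrak{k}_S\subset[\mathfrak{g},\mathfrak{g}]$ and the constraint on $\beta$ is empty. For this case I would show directly that such a $y'$ cannot lie in $\mu_\mathbb{R}^{-1}(\beta)$ with $\beta$ regular. The idea is to pair $\mu_\mathbb{R}(x)$ along the path $\lambda(e^{-s})x$ and use the monotonicity of $s\mapsto\langle\mu_\mathbb{R}(\lambda(e^{-s})x),\lambda\rangle$, whose limit equals $\langle\beta,\lambda\rangle=0$ exactly when $\lambda$ has zero central part. The plan is to combine this with a perturbation $\beta+\epsilon\beta'$ to show that the semistable locus for $\beta$ near a wall is contained in the stable locus. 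If this fails in general, I would instead read ``generic'' as including the requirement that no point has positive-dimensional stabilizer meeting $\mu_\mathbb{R}^{-1}(\beta)$. This requirement is an open condition, and it is the one used later (Assumption~\ref{assump:standing_assumptions}(2)).
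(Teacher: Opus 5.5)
There is a genuine gap, and it sits in the case you flagged as the main obstacle.

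\textbf{What works.} The first half of your argument is sound.
\begin{itemize}
\item Since $\langle\chi,\lambda\rangle=0$, the limit $y$ of a strictly semistable $x$ is still $\beta$-semistable.
\item The closed orbit in $\overline{G_\mathbb{C}y}\cap M^{\beta-ss}$ has a positive-dimensional reductive stabilizer.
\item King's Kempf--Ness theorem moves this orbit to a point $y'\in\mu_I^{-1}(\beta)$.
\item $Ay'=0$ gives $\langle\beta,A\rangle=0$ on $\mathfrak{g}_{y'}$.
\end{itemize}
This confines the bad $\beta$ to finitely many subspaces $\mathrm{Ann}(\mathrm{pr}_{\mathfrak z}\mathfrak t_S)$. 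Each is proper whenever $\mathfrak t_S\not\subset[\mathfrak g,\mathfrak g]$. So for $G$ a torus your proof is complete. It is also cleaner there than arguing with cone generators, because only polystable points enter.

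\textbf{Where it fails.} If $\mathfrak t_S\subset[\mathfrak g,\mathfrak g]$, then $\langle\chi,\lambda\rangle=0$ for \emph{every} $\beta\in\mathfrak z^*$ and every $\lambda\in\mathfrak t_S$. So neither the monotonicity of $s\mapsto\langle\mu_I(\lambda(e^{-s})x),\lambda\rangle$ nor a perturbation $\beta+\epsilon\beta'$ inside $\mathfrak z^*$ can exclude these points. In fact the statement is false there. Let $G=U(1)\times SU(2)$ act on $\mathbb{H}\oplus\mathbb{H}^2$:
\begin{itemize}
\item $U(1)$ acts on $\mathbb{H}=\mathbb{C}\oplus\mathbb{C}^*$ with weights $\pm1$;
\item $SU(2)$ acts on $\mathbb{H}^2=\mathbb{C}^2\oplus(\mathbb{C}^2)^*$ by the standard representation and its dual.
\end{itemize}
Then $\mathfrak z^*\cong\mathbb{R}$, and every character of $G_\mathbb{C}$ is trivial on $SL_2(\mathbb{C})$, so semi-invariants of the first factor pull back to $M$. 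Hence $(x_1,0)$ is $\beta$-semistable whenever $x_1$ is $\beta$-semistable for the $U(1)$-factor, and such $x_1$ exist for every $\beta$. But the stabilizer of $(x_1,0)$ contains $\{1\}\times SL_2(\mathbb{C})$. Therefore $M^{\beta-ss}\neq M^{\beta-s}$ for every $\beta\in\mathfrak z^*_\mathbb{Z}$.

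\textbf{Your fallback does not close the gap.} Your own argument together with its converse shows that ``no point of $\mu_I^{-1}(\beta)$ has positive-dimensional stabilizer'' is equivalent to $M^{\beta-ss}=M^{\beta-s}$. (The converse: a point of $\mu_I^{-1}(\beta)$ is $\beta$-semistable, and a positive-dimensional stabilizer rules out stability.) Building this condition into ``generic'' therefore makes the lemma a tautology. It is also not Assumption (2), which only asks for freeness on $M^{\beta-s}$ and says nothing about strictly semistable points.

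What is actually needed is an extra hypothesis excluding polystable points whose stabilizer tori lie in $[G_\mathbb{C},G_\mathbb{C}]$. This is automatic for tori. For framed quiver representations, $M^{\beta-ss}=M^{\beta-s}$ for generic $\beta$ follows directly from King's numerical stability criterion.

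\textbf{The paper's proof does not supply the missing idea either.} It removes the hyperplanes $\langle\chi,\lambda_k\rangle=0$ for generators $\lambda_k$ of the finitely many cones $C_x$. When some $\lambda_k$ lies in the derived part, that ``hyperplane'' is all of $\mathfrak z^*$. This is unavoidable in the example above, where $C_{(x_1,0)}$ with $x_1\neq0$ has the $SU(2)$ coroot line as its lineality space. Moreover, passing from the generators to the whole cone requires the sign condition $\langle\chi,\cdot\rangle\ge 0$ on $C_x$ coming from semistability, which the paper does not invoke.
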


\begin{proof}
Fix a maximal torus $T_\mathbb{C}$ of $G_\mathbb{C}$, and let $X_*(T) \coloneqq \mathrm{Hom}(\mathbb{C}^*, T_\mathbb{C})$ be its cocharacter group, i.e., the group of $1$-parameter subgroups of $T_\mathbb{C}$.
  Any $1$-parameter subgroup $\lambda'$ of $G_\mathbb{C}$ is conjugate to some $\lambda \in X_*(T)$, meaning $\lambda'(t) = g\lambda(t)g^{-1}$ for some $g \in G_\mathbb{C}$.
  Since $\chi$ is conjugation-invariant, we have $\langle\chi, \lambda'\rangle = \langle\chi, \lambda\rangle$.
  Furthermore, the limit $\displaystyle\lim_{t\to 0} \lambda'(t)\cdot x$ exists if and only if $\displaystyle\lim_{t\to 0} \lambda(t)\cdot (g^{-1}x)$ exists.
  Therefore, to apply Proposition~\ref{prop:Hilbert-Munford criterion}, it suffices to restrict our attention to $\lambda \in X_*(T)$. \medskip \medskip

  Consider the weight decomposition $M=\bigoplus_{\mu \in S} M_\mu$ with respect to the $T_\mathbb{C}$-action, where $X^*(T) \coloneqq \mathrm{Hom}(T_\mathbb{C}, \mathbb{C}^*)$ is the character group of $T_\mathbb{C}$, $M_\mu \coloneqq \{x\in M \mid t\cdot x = \mu(t) x \text{ for all } t\in T_\mathbb{C}\}$, and $S \subset X^*(T)$ is the finite set of weights.
  For any point $x=\sum_{\mu \in S}x_\mu \in M$, an element $\lambda \in X_*(T)$ acts by
  \[ \lambda(t)\cdot x = \sum_{\mu \in S}t^{\langle\mu,\lambda\rangle}x_\mu. \]
  Hence, the limit $\displaystyle\lim_{t\to 0} \lambda(t)\cdot x$ exists if and only if $\langle\mu,\lambda\rangle \geq 0$ for all $\mu \in S$ with $x_\mu \neq 0$. \medskip

  In the vector space $\mathfrak{t}_\mathbb{R} \coloneqq X_*(T)\otimes_\mathbb{Z}\mathbb{R}$, we define a convex cone
  \begin{equation*}
    C_x \coloneqq \bigcap_{\mu \in S, \, x_\mu \neq 0}\{\lambda \in \mathfrak{t}_\mathbb{R} \mid \langle\mu,\lambda\rangle\geq 0\}.
  \end{equation*}
  Since the set of weights $S$ is finite, there are only finitely many such cones as $x$ varies over $M$. By the Minkowski--Weyl theorem, each cone is finitely generated, allowing us to choose a finite subset $\{\lambda_k\}_{k=1}^N \subset X_*(T)$ containing the generators for all these cones.\medskip

  For any lattice point $\beta \in\mathfrak{z}^*_\mathbb{Z}$ with corresponding character $\chi$, the condition $\langle \chi, \lambda_k \rangle = 0$ defines a hyperplane $H_k \subset \mathfrak{z}^*$. Letting $D'_\mathbb{R} \coloneqq \bigcup_{k=1}^N H_k$ be the finite union of these hyperplanes, any choice of $\beta \in \mathfrak{z}^*_\mathbb{Z} \setminus D'_\mathbb{R}$ ensures that $\langle\chi, \lambda\rangle \neq 0$ for all non-trivial $\lambda$ for which the limit exists. Consequently, Proposition~\ref{prop:Hilbert-Munford criterion} yields $M^{\beta-ss}=M^{\beta-s}$.
\end{proof}

\begin{remark}\label{rem:smoothness of generic beta-GIT quot}
  Let $V\subset M$ be a $G_\mathbb{C}$-invariant $I$-holomorphic subvariety.
  By Lemma~\ref{lemma:generic_beta and stability}, for a generic lattice point $\beta\in\mathfrak{z}^*_\mathbb{Z}$, we have $V^{\beta-ss}=V^{\beta-s}$.
  Consequently, if $V^{\beta-s}\neq \emptyset$, then the $\beta$-GIT quotient $V/\!/_\beta G_\mathbb{C}$ is generally an orbifold (i.e., it has at most finite quotient singularities).
  Furthermore, if the group $G_\mathbb{C}$ acts freely on $V^{\beta-s}$, then the quotient $V/\!/_\beta G_\mathbb{C}$ becomes a non-singular algebraic variety.
\end{remark}

\subsubsection{Kempf--Ness Type Theorem and Projective Crepant Resolutions}\quad

Now we apply the GIT framework to our hyperkähler setup. The following theorem by King \cite{King94} connects the differential geometric quotient $X_\zeta$ with the algebraic GIT quotient. Using this isomorphism, together with Assumption \ref{assump:standing_assumptions}~(1) and (2), we show that generic hyperkähler quotients naturally provide projective crepant resolutions.

\begin{theorem}[{King \cite[Cor.~6.2]{King94}}]\label{thm:King}
  Consider Setup~\ref{setup:HK quot}. Let $\mu=(\mu_I,\mu_\mathbb{C}) \colon M\to\mathfrak{g}^*\oplus\mathfrak{g}^*_\mathbb{C}$. Fix an arbitrary lattice point $\beta \in \mathfrak{z}^*_\mathbb{Z}$ and a point $\alpha \in\mathfrak{z}^*_\mathbb{C}$, and let $\zeta=(\beta,\alpha) \in\mathfrak{z}^*\oplus\mathfrak{z}^*_\mathbb{C}$.
  Then, the inclusion map $\mu^{-1}(\zeta) \hookrightarrow \mu_\mathbb{C}^{-1}(\alpha)^{\beta-ss}$ induces a Kempf--Ness type homeomorphism:
  \[X_\zeta \simeq \mu_\mathbb{C}^{-1}(\alpha)/\!/_{\beta}G_\mathbb{C}.\]
\end{theorem}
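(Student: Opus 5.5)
The plan follows King's original argument: compare $\mu^{-1}(\zeta)$ with the $\beta$-semistable locus orbit by orbit, then pass to quotients. Write $V \coloneqq \mu_\mathbb{C}^{-1}(\alpha)$. This is a $G_\mathbb{C}$-invariant $I$-holomorphic affine subvariety of $M$, because $\alpha\in\mathfrak{z}^*_\mathbb{C}$ is central and $\mu_\mathbb{C}$ is $G_\mathbb{C}$-equivariant. Then $\mu^{-1}(\zeta)=V\cap\mu_I^{-1}(\beta)$, and it suffices to prove the following real-versus-complex statement on $V$ (cf.~Definition~\ref{def:b-ss}):

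(a) every $x\in\mu^{-1}(\zeta)$ is $\beta$-semistable;

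(b) a $G_\mathbb{C}$-orbit in $V^{\beta-ss}$ meets $\mu_I^{-1}(\beta)$ if and only if it is closed in $V^{\beta-ss}$, and it then meets it in exactly one $G$-orbit.

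Since Remark~\ref{rem:geometric meaning of GIT quot} identifies points of $V/\!/_\beta G_\mathbb{C}$ with closed orbits in $V^{\beta-ss}$, (a) and (b) give a continuous bijection $X_\zeta\to V/\!/_\beta G_\mathbb{C}$.

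The main tool is the lifted Kempf--Ness function. Put $L=V\times\mathbb{C}$ with $G_\mathbb{C}$ acting by $g\cdot(x,z)=(gx,\chi(g)^{-1}z)$. Then $x$ is $\beta$-semistable exactly when, for $z\neq0$, the orbit closure of $(x,z)$ avoids $V\times\{0\}$; this is the standard translation of nonvanishing of some $f\in R^{G_\mathbb{C}}_{\chi^n}$. Define on $G_\mathbb{C}$ the function
\[
\Phi_x(g)=\tfrac12|g\cdot x|^2-\log|\chi(g)|^{2}.
\]
It is right $G$-invariant. Up to normalization its derivative along $\exp(i\xi)$ is $\langle\mu_I(gx)-\beta,\xi\rangle$, so critical points are exactly the points of the orbit lying in $\mu_I^{-1}(\beta)$. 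Moreover $\Phi_x$ is convex along geodesics $t\mapsto g\exp(it\xi)$ of $G\backslash G_\mathbb{C}$, since the $\chi$-term is linear in $t$ and $t\mapsto|\exp(it\xi)x|^2$ is a sum of exponentials. It is strictly convex unless $\xi$ fixes $x$. This convexity gives uniqueness up to $G$ in (b).

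For existence and for (a), I use Proposition~\ref{prop:Hilbert-Munford criterion}-type reasoning on one-parameter subgroups. The function $\Phi_x$ is bounded below, or attains its minimum, exactly when, for every $\lambda$ with $\lim_{t\to0}\lambda(t)x$ existing, the weight $\langle\chi,\lambda\rangle$ has the appropriate sign, with equality only when the limit stays in the orbit. This is exactly $\beta$-(poly)stability of $x$. Conversely, if $\mu_I(x)=\beta$, then $x$ is a minimum of the convex function $\Phi_x$, so $\Phi_x$ is bounded below; hence $\lim\log|z|$ stays bounded along the orbit and $x\in V^{\beta-ss}$, which proves (a).

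The final step is to check that the bijection is a homeomorphism. I would show that the map $\mu^{-1}(\zeta)/G\to V/\!/_\beta G_\mathbb{C}$ is proper, for instance because $|x|$ is bounded on preimages of compact sets via the $\chi$-invariant functions, and a continuous proper bijection between locally compact Hausdorff spaces is a homeomorphism. The main obstacle is the properness and convexity estimate that turns "bounded below along every one-parameter subgroup" into existence of a minimum on all of $G\backslash G_\mathbb{C}$. I would handle it as King does, reducing to the torus case via the Cartan decomposition $G_\mathbb{C}=G\exp(i\mathfrak{g})G$ and the finiteness of the weight cones used in the proof of Lemma~\ref{lemma:generic_beta and stability}.
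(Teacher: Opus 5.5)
Your outline is correct, but it does not follow the paper's route. The paper never re-derives the Kempf--Ness correspondence. It quotes King's Cor.~6.2 for the ambient vector space, $\mu_I^{-1}(\beta)/G\simeq M/\!/_\beta G_\mathbb{C}$, and then restricts to the closed $G_\mathbb{C}$-invariant subvariety $V=\mu_\mathbb{C}^{-1}(\alpha)$ using $M^{\beta-ss}\cap V=V^{\beta-ss}$. That argument is two lines long, but it tacitly uses two restriction facts:
\begin{itemize}
\item $\chi^n$-semi-invariants on $V$ extend to $M$ (reductivity of $G_\mathbb{C}$), so $V/\!/_\beta G_\mathbb{C}\to M/\!/_\beta G_\mathbb{C}$ is a closed embedding;
\item a point of $\mu_I^{-1}(\beta)$ lying over this image already lies in $V$, because its orbit is the closed orbit in the closure of a $V$-orbit.
\end{itemize}
Your approach avoids both facts, since the convexity and critical-point analysis of the lifted Kempf--Ness function works verbatim on any closed $G_\mathbb{C}$-stable subvariety of a unitary representation. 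The price is that you must supply the analytic core yourself rather than cite it.

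Some points in that core need fixing or can be simplified.
\begin{itemize}
\item \textbf{Invariance and geodesics.} With $\Phi_x(g)=\frac12|gx|^2-\log|\chi(g)|^2$, the function is \emph{left} $G$-invariant, not right. The geodesics of $G\backslash G_\mathbb{C}$ through $Gg$ are $t\mapsto G\exp(it\xi)g$. Your sum-of-exponentials computation, applied to $gx$, proves convexity along these. Along $t\mapsto g\exp(it\xi)$, by contrast, $|g\exp(it\xi)x|^2$ need not be convex when $g\notin G$.
\item \textbf{Normalization.} The coefficient and sign of the logarithmic term must be matched to $d\chi=2\pi i\beta$ and $\langle\mu,A\rangle=\frac12x^*Ax$. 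The sign decides whether $\mu_I^{-1}(\beta)$ corresponds to $\chi$- or $\chi^{-1}$-semistability.
\item \textbf{Existence of a minimum.} This step, which you call the main obstacle, is easier than a torus reduction. Suppose the lifted orbit $O=G_\mathbb{C}\cdot(x,z)$ is closed in $L$. Then the coordinate $w$ is a nowhere-vanishing regular function on the affine variety $O$, hence a unit, so $|w|^{-1}$ is polynomially bounded on $O$. It follows that $\frac12|y|^2+\log|w|^2$ is proper on $O$, and its minimizer gives a point of $\mu_I^{-1}(\beta)$ in the orbit.
\item \textbf{Properness of the quotient map.} Compose with the projective map to $V/\!/G_\mathbb{C}$. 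Suppose $x_n\in\mu_I^{-1}(\beta)$ have bounded homogeneous invariants and $|x_n|\to\infty$, and let $u$ be a limit of $x_n/|x_n|$. Then $\mu_I(u)=0$, so $|u|\le|gu|$ for all $g$ by convexity. Also $u$ lies in the nullcone, so $\inf_g|gu|=0$. Hence $u=0$, contradicting $|u|=1$.
\end{itemize}
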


\begin{proof}
  {\cite[Cor.~6.2]{King94}} states a Kempf--Ness type isomorphism for the moment map $\mu_I$ on the vector space $M=\mathbb{H}^n$:
    \[\mu_I^{-1}(\beta)/G \simeq M/\!/_{\beta} G_\mathbb{C}.\]
    This isomorphism is given by the morphism naturally induced by the inclusion map $\mu_I^{-1}(\beta) \hookrightarrow M^{\beta-ss}$. Considering the inclusion map obtained by restriction to the affine variety $V=\mu_\mathbb{C}^{-1}(\alpha)\subset M$, i.e.,
    \[\mu_I^{-1}(\beta)\cap V \hookrightarrow M^{\beta-ss} \cap V = V^{\beta-ss},\]
    it follows that the map induces the isomorphism $X_\zeta \simeq V/\!/_{\beta}G_\mathbb{C}$.
\end{proof}

\begin{remark}\label{rem:to King}
  The complex structure $I$ and the $I$-holomorphic symplectic form on $M$ naturally induce those on both sides of this isomorphism. Therefore, the Kempf--Ness type isomorphism provides an isomorphism as $I$-holomorphic symplectic varieties.
\end{remark}

As a consequence of Assumption~\ref{assump:standing_assumptions}~(1), the existence of stable points naturally extends to any $\alpha \in \mathfrak{z}^*_\mathbb{C}$.

\begin{lemma}\label{lemma:V_alpha is not empty}
  Consider Setup~\ref{setup:HK quot} and let $\mu=(\mu_I,\mu_\mathbb{C}) \colon M\to\mathfrak{g}^*\oplus\mathfrak{g}^*_\mathbb{C}$. If $\mu_\mathbb{C}^{-1}(0)^s \neq \emptyset$~(Assumption~\ref{assump:standing_assumptions}~(1)), then for any $\alpha\in\mathfrak{z}^*_\mathbb{C}$, $\mu_\mathbb{C}^{-1}(\alpha)^s \neq \emptyset$.
\end{lemma}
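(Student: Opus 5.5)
The plan is to reduce stability (with respect to $\beta=0$) on the fibres $\mu_\mathbb{C}^{-1}(\alpha)$ to stability in the ambient space $M$. Then I will use that the complex moment map is a submersion at stable points, and finally use the homogeneity of $\mu_\mathbb{C}$ under scalar multiplication to reach every $\alpha$.

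\emph{Step 1: stability on $V_\alpha \coloneqq \mu_\mathbb{C}^{-1}(\alpha)$ equals stability on $M$.} Since $\alpha\in\mathfrak{z}^*_\mathbb{C}$ is $G_\mathbb{C}$-invariant, $V_\alpha$ is a closed $G_\mathbb{C}$-invariant $I$-holomorphic subvariety, so Definition~\ref{def:b-ss} applies to it. For $\beta=0$ the constant functions lie in $R^{G_\mathbb{C}}_{\chi^n}$, so every point is semistable. Proposition~\ref{prop:Hilbert-Munford criterion} then says that $x\in V_\alpha$ is stable if and only if there is no non-trivial $1$-parameter subgroup $\lambda$ for which $\lim_{t\to0}\lambda(t)\cdot x$ exists in $V_\alpha$ (the condition $\langle\chi,\lambda\rangle=0$ is automatic). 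Because $V_\alpha$ is closed in $M$ and $G_\mathbb{C}$-invariant, such a limit exists in $V_\alpha$ if and only if it exists in $M$. Hence
\[
V_\alpha^s = V_\alpha\cap M^s .
\]
In particular $M^s\neq\emptyset$, since $\mu_\mathbb{C}^{-1}(0)^s\subset M^s$. The set $M^s$ is Zariski open, being a union of the open sets $M_f$. It is also invariant under scalar multiplication by $\mathbb{C}^*$: scalars commute with the linear $G_\mathbb{C}$-action, so the existence of limits of $\lambda(t)\cdot x$ is unaffected by rescaling $x$.

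\emph{Step 2: $\mu_\mathbb{C}$ is a submersion on $M^s$.} For $x\in M^s$ the stabilizer $(G_\mathbb{C})_x$ is finite, so its Lie algebra is zero. The standard moment map identity
\[
\langle d\mu_\mathbb{C}(x)v, A\rangle = \omega_\mathbb{C}(A x, v)
\]
shows that the image of $d\mu_\mathbb{C}(x)$ is the annihilator of $\mathfrak{g}_{\mathbb{C},x}=0$. Hence $d\mu_\mathbb{C}(x)$ is surjective onto $\mathfrak{g}^*_\mathbb{C}$. Consequently $\mu_\mathbb{C}|_{M^s}$ is an open map. Taking $x_0\in\mu_\mathbb{C}^{-1}(0)^s$, the image $\mu_\mathbb{C}(M^s)$ contains an open ball $B_\varepsilon(0)\subset\mathfrak{g}^*_\mathbb{C}$.

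\emph{Step 3: scaling.} Let $\alpha\in\mathfrak{z}^*_\mathbb{C}$ with $\alpha\neq0$; the case $\alpha=0$ is the hypothesis. Choose $s>0$ with $s^2\alpha\in B_\varepsilon(0)$, and pick $y\in M^s$ with $\mu_\mathbb{C}(y)=s^2\alpha$. Since $\mu_\mathbb{C}$ is homogeneous quadratic, $\mu_\mathbb{C}(s^{-1}y)=\alpha$. By the $\mathbb{C}^*$-invariance of $M^s$ we also have $s^{-1}y\in M^s$. By Step 1, $s^{-1}y\in\mu_\mathbb{C}^{-1}(\alpha)^s$, which proves the lemma.

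The main obstacle I expect is Step 1. One has to check carefully that King's stability on the subvariety $V_\alpha$, which is defined through invariants of its own coordinate ring, agrees with ambient stability. The cleanest route is the one above, through Proposition~\ref{prop:Hilbert-Munford criterion}. If that is judged insufficient, I would instead argue directly from Definition~\ref{def:b-ss}: stable points in the $\beta=0$ sense are exactly those with closed orbit and finite stabilizer, and both conditions are intrinsic because $V_\alpha$ is closed in $M$.
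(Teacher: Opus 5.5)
Your proof is correct and follows essentially the same route as the paper. Both arguments show that nonemptiness of the stable locus holds on a neighborhood of $\alpha=0$, then use the homogeneity $\mu_\mathbb{C}(sx)=s^2\mu_\mathbb{C}(x)$, together with the scaling-invariance of stability, to reach every $\alpha\in\mathfrak{z}^*_\mathbb{C}$.

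Your Steps 1--2 usefully supply the justification that the paper only asserts (``is an open condition''), namely the identification $V_\alpha^s=V_\alpha\cap M^s$ and the surjectivity of $d\mu_\mathbb{C}$ at points with finite stabilizer. One small correction: $M^s$ is not simply the union of the sets $M_f$. It is open because it is that union intersected with the open locus $\{x \mid \dim (G_\mathbb{C})_x=0\}$.
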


\begin{proof}
  For a point $\alpha\in\mathfrak{z}^*_\mathbb{C}$, the condition $\mu_\mathbb{C}^{-1}(\alpha)^s \neq \emptyset$ is an open condition. Hence, when $\mu_\mathbb{C}^{-1}(0)^s \neq \emptyset$, there exists an open neighborhood of the point $0\in\mathfrak{z}^*_\mathbb{C}$ such that for any point $\alpha'$ belonging to it, $\mu_\mathbb{C}^{-1}(\alpha')^s \neq \emptyset$. Now, by Setup~\ref{setup:HK quot}, the complex moment map $\mu_\mathbb{C}$ satisfies the compatibility condition $r^*\mu_\mathbb{C} = r^2 \mu_\mathbb{C}$ with respect to the natural $\mathbb{R}^+$-action on $M$. Therefore, it follows that $\mu_\mathbb{C}^{-1}(\alpha)^s \neq \emptyset$ for any $\alpha$.
\end{proof}

Combined with Lemma~\ref{lemma:generic_beta and stability} and the free action assumption, this guarantees the non-singularity of the GIT quotients for generic lattice points.

\begin{proposition}\label{prop:smoothness of generic beta-GIT quot}
  Under Setup~\ref{setup:HK quot} and Assumptions~\ref{assump:standing_assumptions}~(1) and (2), let $\mu=(\mu_I,\mu_\mathbb{C}) \colon M\to\mathfrak{g}^*\oplus\mathfrak{g}^*_\mathbb{C}$. Then for a generic lattice point $\beta\in\mathfrak{z}^*_\mathbb{Z}$ and any point $\alpha\in\mathfrak{z}^*_\mathbb{C}$, the $\beta$-GIT quotient $\mu_\mathbb{C}^{-1}(\alpha)/\!/_\beta G_\mathbb{C}$ is non-singular.
\end{proposition}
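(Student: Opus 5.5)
The plan is to reduce smoothness of $\mu_\mathbb{C}^{-1}(\alpha)/\!/_\beta G_\mathbb{C}$ to two facts about the semistable locus $V^{\beta-ss}$, where $V \coloneqq \mu_\mathbb{C}^{-1}(\alpha)$: that it equals the stable locus, and that it is a smooth variety on which $G_\mathbb{C}$ acts freely. Once both hold, Remark~\ref{rem:smoothness of generic beta-GIT quot} applies directly.

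\textbf{Choice of $\beta$.} Take $\beta$ to be the generic lattice point of Assumption~\ref{assump:standing_assumptions}~(2), shrunk further if necessary so that it avoids the finitely many hyperplanes $D'_\mathbb{R}$ built in the proof of Lemma~\ref{lemma:generic_beta and stability}. Then $M^{\beta-ss}=M^{\beta-s}$, and $G_\mathbb{C}$ acts freely on $M^{\beta-s}$.

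\textbf{Passing to $V$.} The ring $\mathbb{C}[V]$ is a quotient of $\mathbb{C}[M]$ by a $G_\mathbb{C}$-invariant ideal. By reductivity, semi-invariants of weight $\chi^n$ on $V$ lift to semi-invariants on $M$ (for $n$ large enough). Hence $V^{\beta-ss}=V\cap M^{\beta-ss}$. Using the Hilbert--Mumford criterion (Proposition~\ref{prop:Hilbert-Munford criterion}), where the limit condition only involves the closed subvariety $V$, we also get $V^{\beta-s}=V\cap M^{\beta-s}$. Therefore
\[
V^{\beta-ss}=V^{\beta-s}\subset M^{\beta-s},
\]
and $G_\mathbb{C}$ acts freely on it. This is the same mechanism already used in Theorem~\ref{thm:King} and Remark~\ref{rem:smoothness of generic beta-GIT quot}.

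\textbf{Smoothness of $V^{\beta-s}$.} This is the key step. Freeness means the infinitesimal action $\mathfrak{g}_\mathbb{C}\to T_xM$ is injective at every $x\in V^{\beta-s}$. By the defining identity of the complex moment map,
\[
\langle d\mu_\mathbb{C}(x)v, A\rangle=\omega_\mathbb{C}(A x, v),
\]
and $\omega_\mathbb{C}$ is nondegenerate. So injectivity of $A\mapsto Ax$ is equivalent to surjectivity of $d\mu_\mathbb{C}(x)\colon T_xM\to\mathfrak{g}_\mathbb{C}^*$. Hence $\alpha$ is a regular value of $\mu_\mathbb{C}$ along $V^{\beta-s}$, and $V^{\beta-s}$ is a smooth subvariety of dimension $\dim_\mathbb{C} M-\dim\mathfrak{g}_\mathbb{C}$.

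\textbf{Conclusion.} Since $V^{\beta-s}$ is smooth with a free action, Luna's étale slice theorem shows that $V^{\beta-s}\to V^{\beta-s}/G_\mathbb{C}$ is a principal $G_\mathbb{C}$-bundle, so the geometric quotient is smooth. By Remark~\ref{rem:geometric meaning of GIT quot}, this geometric quotient is the whole GIT quotient, because $V^{\beta-ss}=V^{\beta-s}$. Nonemptiness, which is needed for the dimension count to be meaningful, comes from Lemma~\ref{lemma:V_alpha is not empty} together with Remark~\ref{rem:to stable locus}. I expect no genuine obstacle beyond checking the compatibility of stability notions on $V$ and on $M$.
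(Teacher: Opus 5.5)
Your proof is correct and follows the same outline as the paper's. Genericity of $\beta$ gives $V^{\beta-ss}=V^{\beta-s}$ via Lemma~\ref{lemma:generic_beta and stability}, nonemptiness comes from Lemma~\ref{lemma:V_alpha is not empty}, and freeness on $V^{\beta-s}$ is inherited from Assumption~\ref{assump:standing_assumptions}~(2) through $V^{\beta-s}\subset M^{\beta-s}$.

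Where you genuinely differ is the final step. The paper goes directly from freeness to non-singularity by citing Remark~\ref{rem:smoothness of generic beta-GIT quot}. That remark is only valid when $V$ is itself smooth along $V^{\beta-s}$: for a singular $G_\mathbb{C}$-invariant subvariety, a free action on the stable locus need not give a smooth quotient (take $G$ trivial). Your argument supplies exactly the missing ingredient. You note that $\mathfrak{g}_x=0$ is equivalent to surjectivity of $d\mu_\mathbb{C}(x)$, so $\mu_\mathbb{C}^{-1}(\alpha)$ is smooth of dimension $\dim_\mathbb{C} M-\dim\mathfrak{g}_\mathbb{C}$ along its stable locus, and then Luna's slice theorem finishes. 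You also justify $V^{\beta-ss}=V\cap M^{\beta-ss}$ and $V^{\beta-s}=V\cap M^{\beta-s}$, which the paper only asserts. So your version costs one extra paragraph and buys a complete argument.

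One small wording correction: do not \emph{shrink} or perturb $\beta$ away from the choice in Assumption~\ref{assump:standing_assumptions}~(2). Moving $\beta$ off a wall can enlarge $M^{\beta-s}$, and freeness need not persist on the larger set. Instead, read \emph{generic} in that assumption as already meaning that $\beta$ avoids the hyperplanes of Lemma~\ref{lemma:generic_beta and stability}, which is how the paper uses it.
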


\begin{proof}
  Let $V \coloneqq \mu_\mathbb{C}^{-1}(\alpha) \subset M$. Then, by Lemma~\ref{lemma:generic_beta and stability}, we have $V^{\beta-ss}=V^{\beta-s}$ for a generic $\beta$.
  By Lemma~\ref{lemma:V_alpha is not empty}, Assumption~\ref{assump:standing_assumptions}~(1) (i.e.,~$\mu_\mathbb{C}^{-1}(0)^s \neq \emptyset$) implies $V^s \neq \emptyset$. Furthermore, by Remark~\ref{rem:geometric meaning of GIT quot}, we obtain $\emptyset \neq V^s \subset V^{\beta-s}$.
  On the other hand, Assumption~\ref{assump:standing_assumptions}~(2) ensures that the group $G_\mathbb{C}$ acts freely on $V^{\beta-s} \subset M^{\beta-s}$.
  Consequently, by Remark~\ref{rem:smoothness of generic beta-GIT quot}, the $\beta$-GIT quotient $\mu_\mathbb{C}^{-1}(\alpha)/\!/_\beta G_\mathbb{C}$ is a non-singular algebraic variety.
\end{proof}

Next, we show that the natural map between hyperkähler quotients provides a projective crepant resolution whenever the domain is non-singular.

\begin{proposition}\label{prop:crepant resol and HK quot}
  Under Setup~\ref{setup:HK quot} and Assumption~\ref{assump:standing_assumptions}~(1),
  let $\mu=(\mu_I,\mu_\mathbb{C}) \colon M\to\mathfrak{g}^*\oplus\mathfrak{g}^*_\mathbb{C}$.
  Fix an arbitrary lattice point $\beta \in \mathfrak{z}^*_\mathbb{Z}$ and a point $\alpha \in\mathfrak{z}^*_\mathbb{C}$, and let $\zeta=(\beta,\alpha) \in\mathfrak{z}^*\oplus\mathfrak{z}^*_\mathbb{C}$ and $\zeta_0 \coloneqq (0,\alpha)$. Assume that $X_\zeta\simeq \mu^{-1}_\mathbb{C}(\alpha)/\!/_\beta G_\mathbb{C}$ is non-singular.
  Then, the map $\pi_I \colon X_\zeta \to X_{\zeta_0}$ induced by the inclusion map $\mu^{-1}(\zeta) \subset \mu^{-1}_\mathbb{C}(\alpha)=\mu^{-1}_\mathbb{C}(\alpha)^{ss}$ is an $I$-holomorphic projective crepant resolution.
\end{proposition}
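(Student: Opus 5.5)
We must prove $\pi_I\colon X_\zeta\to X_{\zeta_0}$ is projective, birational, surjective, and crepant. We work throughout on the algebraic side via Theorem~\ref{thm:King}. Set $V\coloneqq\mu_\mathbb{C}^{-1}(\alpha)$ and $R=\mathbb{C}[V]$, so that $X_\zeta\simeq V/\!/_\beta G_\mathbb{C}=\operatorname{Proj}\bigoplus_{n\ge0}R^{G_\mathbb{C}}_{\chi^n}$ and $X_{\zeta_0}\simeq V/\!/G_\mathbb{C}=\operatorname{Spec}R^{G_\mathbb{C}}$. The degree-zero part of the graded ring is $R^{G_\mathbb{C}}$. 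Hence the structure morphism $\operatorname{Proj}(\bigoplus_n R_{\chi^n}^{G_\mathbb{C}})\to\operatorname{Spec}(R^{G_\mathbb{C}})$ is projective, because the graded ring is finitely generated over its degree-zero part by Hilbert's finiteness theorem for the reductive group $G_\mathbb{C}\times\mathbb{C}^*$. On semistable points this morphism is induced by the inclusion $V^{\beta-ss}\subset V=V^{ss}$, and under the Kempf--Ness homeomorphisms it agrees with $\pi_I$. It is $I$-holomorphic by Remark~\ref{rem:to King}.

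Next we show birationality. By Lemma~\ref{lemma:V_alpha is not empty}, $V^s\neq\emptyset$. By Remark~\ref{rem:to stable locus}, $V^s\subset V^{\beta-s}$. The open set $V^s$ is $G_\mathbb{C}$-saturated in $V$, and its image $V^s/G_\mathbb{C}$ is open in both quotients (Remark~\ref{rem:geometric meaning of GIT quot}, applied with $\beta$ and with $0$), and $\pi_I$ restricts to the identity there. For this to give birationality, $V^s/G_\mathbb{C}$ must be dense in both quotients. Density in $X_\zeta$ holds if $V$, and hence $X_\zeta$, is irreducible. I would argue this from smoothness of $X_\zeta$ together with the dimension count $\dim V^s/G_\mathbb{C}=2(n-\dim G)=\dim X_\zeta$. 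Since $\mu_\mathbb{C}$ is submersive at points with finite stabilizer, every component of $V^{\beta-s}/G_\mathbb{C}$ has this dimension. Connectedness would come from the $\mathbb{C}^*$-contraction when $\alpha=0$, and from a deformation argument otherwise. Because $V^s/G_\mathbb{C}$ is dense in $V/\!/G_\mathbb{C}$, the image of $\pi_I$ is dense. Since $\pi_I$ is proper, it is surjective.

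Crepancy is where the holomorphic symplectic structure enters. The form $\omega_\mathbb{C}=\omega_J+i\omega_K$ on $M$ descends to a holomorphic symplectic form $\omega_\zeta$ on the smooth variety $X_\zeta$ (Remark~\ref{rem:to King}), so $K_{X_\zeta}$ is trivialized by $\omega_\zeta^{\wedge m}$ with $m=n-\dim G$. On the open set $V^s/G_\mathbb{C}$ this form coincides with the reduced symplectic form on the regular part of $X_{\zeta_0}$. The complement of $V^s/G_\mathbb{C}$ in $X_{\zeta_0}$ is expected to have codimension at least $2$, since $X_{\zeta_0}$ is normal as a quotient of the normal variety $V$. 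I would deduce normality of $V$ from the complete intersection property and regularity in codimension one. Then the symplectic form extends to the regular locus of $X_{\zeta_0}$, and $\pi_I^*\omega_{\zeta_0}^{\wedge m}=\omega_\zeta^{\wedge m}$ holds on a dense open set, hence everywhere. The pullback of a nowhere-vanishing top form is therefore nowhere vanishing, which means $K_{X_\zeta}=\pi_I^*K_{X_{\zeta_0}}$, i.e.\ $\pi_I$ is crepant.

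The main obstacle is the irreducibility and normality input: density of the stable locus in $V^{\beta-s}$, and normality of $V/\!/G_\mathbb{C}$. I would first try to get both from flatness of $\mu_\mathbb{C}$, deduced from $V^s\neq\emptyset$ and the homogeneity $r^*\mu_\mathbb{C}=r^2\mu_\mathbb{C}$ as in Lemma~\ref{lemma:V_alpha is not empty}, combined with Crawley-Boevey-type dimension estimates. Failing that, I would restrict $X_{\zeta_0}$ to the irreducible component containing $V^s/G_\mathbb{C}$.
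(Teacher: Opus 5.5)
Your outline is the paper's proof. You identify the projective morphism $\operatorname{Proj}\bigoplus_n R^{G_\mathbb{C}}_{\chi^n}\to\operatorname{Spec}R^{G_\mathbb{C}}$ with $\pi_I$ through King's theorem, get birationality from the common open set $V^s/G_\mathbb{C}$, and get crepancy from the descended holomorphic symplectic form. The paper simply asserts that $V^s/G_\mathbb{C}$ is dense in both quotients and that preserving the form gives crepancy. You are right that this hides two inputs: $V^s$ must meet every irreducible component of $V=\mu_\mathbb{C}^{-1}(\alpha)$ (e.g.\ $V$ irreducible), and $X_{\zeta_0}$ must be normal. However, your sketches do not supply them.

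\emph{Irreducibility via flatness.} A stable point only shows that $\mu_\mathbb{C}$ is submersive near it. So only the components of $V$ meeting $V^s$ are known to have dimension $\dim M-\dim G$, and nothing you invoke rules out further components disjoint from $V^s$. Hence flatness, the complete-intersection property and irreducibility do not follow from $V^s\neq\emptyset$ and homogeneity; Lemma~\ref{lemma:V_alpha is not empty} only propagates non-emptiness. The $R_1$ condition needed for Serre's criterion is also left unaddressed.

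\emph{Irreducibility via $X_\zeta$.} The $\mathbb{C}^*$-contraction only shows that every connected component of $X_\zeta$ meets $\pi_I^{-1}(o)$. Connectedness of that fibre is a Zariski-main-theorem statement that presupposes the normality and birationality you are trying to prove. Equidimensionality of $V^{\beta-s}/G_\mathbb{C}$ does not give irreducibility either. Moreover, even an irreducible $X_\zeta$ would only make $\pi_I^{-1}(V^s/G_\mathbb{C})$ dense in $X_\zeta$. Your surjectivity step needs $V^s/G_\mathbb{C}$ dense in $X_{\zeta_0}$, and smoothness of $X_\zeta$ says nothing about components of $V$ containing no $\beta$-semistable point.

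\emph{Crepancy.} The claim ``the complement of $V^s/G_\mathbb{C}$ in $X_{\zeta_0}$ has codimension $\ge 2$ because $X_{\zeta_0}$ is normal'' is a non sequitur: normality bounds the singular locus, not the non-stable locus. Granting normality and birationality, argue instead with Zariski's main theorem. $\pi_I$ is an isomorphism over the complement of a closed set $S\subset X_{\zeta_0}$ of codimension $\ge 2$. So $\omega_\zeta$ transports to $X_{\zeta_0}\setminus S$ and extends over $(X_{\zeta_0})_{\mathrm{reg}}$ by Hartogs. Its top power has no zeros there, since a zero locus would be a divisor inside $S$. Hence $K_{X_{\zeta_0}}$ is trivial and $K_{X_\zeta}=\pi_I^*K_{X_{\zeta_0}}$.

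So the genuinely missing ingredient is density of $V^s$ in $\mu_\mathbb{C}^{-1}(\alpha)$, together with normality of $\mu_\mathbb{C}^{-1}(\alpha)/\!/G_\mathbb{C}$. Neither your proposal nor the paper's proof derives these from Assumption~\ref{assump:standing_assumptions}~(1). In the examples of \S\ref{sec:examples}, at least irreducibility comes from elsewhere. For quiver varieties it is Crawley-Boevey's theorem \cite{CrawleyBoevey01} that moment-map fibres are reduced irreducible complete intersections when simple representations exist. For hypertoric varieties it follows from flatness of $(z,w)\mapsto(z_iw_i)_i$ together with coloop-freeness. In the generality of Setup~\ref{setup:HK quot} it must be proved or assumed. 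Falling back to the component of $X_{\zeta_0}$ containing $V^s/G_\mathbb{C}$ proves a weaker statement than the one claimed.
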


\begin{proof}
  By Theorem~\ref{thm:King} and Remark~\ref{rem:to King}, biholomorphic equivalences $X_\zeta \simeq \mu^{-1}(\alpha)/\!/_{\beta}G_\mathbb{C}$ and $X_{\zeta_0} \simeq \mu^{-1}(\alpha)/\!/G_\mathbb{C}$ with respect to $I$ hold.
  Let $V \coloneqq \mu^{-1}(\alpha)$. For Definition~\ref{def:b-ss}, consider the following projective morphism induced by the inclusion map $R^{G_\mathbb{C}}\subset \bigoplus_{n=0}^\infty R_{\chi^n}^{G_\mathbb{C}}$:
  \[\nu \colon V/\!/_{\beta}G_\mathbb{C} \to V/\!/G_\mathbb{C}.\]
  Note that this morphism $\nu$ is naturally identified with the map $\pi_I \colon X_\zeta \to X_{\zeta_0}$ induced by the inclusion map $\mu^{-1}(\zeta) \subset \mu^{-1}_\mathbb{C}(\alpha)$ stated in the lemma.
  Since the stable locus satisfies $\mu_\mathbb{C}^{-1}(0)^s \neq \emptyset$, by Lemma~\ref{lemma:V_alpha is not empty} and Remark~\ref{rem:geometric meaning of GIT quot}, noting that $\emptyset \neq V^s\subset V^{\beta-s}$, the geometric quotient $V^s/G_\mathbb{C}$ is embedded as a dense open subset of $V/\!/_{\beta}G_\mathbb{C}$ and $V/\!/G_\mathbb{C}$, respectively.
  Then, since the morphism $\nu$ is bijective on $V^s/G_\mathbb{C}$, $\nu$ is a birational map.
  Furthermore, since the morphism $\nu$ preserves the natural holomorphic symplectic $2$-form, if $X_\zeta$ is non-singular, then the morphism $\nu$ is a projective crepant resolution. From this, the assertion of the lemma follows.
\end{proof}

\begin{remark}\label{rem:for crepant resol of HK quot}
  \begin{enumerate}
    \item When the target space $X_{\zeta_0}$ is non-singular, the map $\pi_I$ provides an $I$-biholomorphic equivalence $X_{\zeta}\simeq X_{\zeta_0}$.
    \item If the $J$- and $K$-components of the point $\zeta$ are lattice points, then one can similarly obtain a $J$-holomorphic map $\pi_J$ and a $K$-holomorphic map $\pi_K$ that provide projective crepant resolutions via hyperkähler rotation.
  \end{enumerate}
\end{remark}

By combining Proposition~\ref{prop:smoothness of generic beta-GIT quot} and Proposition~\ref{prop:crepant resol and HK quot}, we obtain the following corollary, which explicitly achieves the goal of this subsection.

\begin{corollary}\label{cor:crepant_resol_Y_beta}
  Under Setup~\ref{setup:HK quot} and Assumption~\ref{assump:standing_assumptions}~(1) and (2), for a generic lattice point $\beta \in \mathfrak{z}^*_\mathbb{Z}$, the hyperkähler quotient $Y_\beta \coloneqq X_{(\beta,0,0)}$ is a projective crepant resolution of the central quotient $X_0$.
\end{corollary}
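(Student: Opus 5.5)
The plan is to specialize Proposition~\ref{prop:smoothness of generic beta-GIT quot} and Proposition~\ref{prop:crepant resol and HK quot} to the central value $\alpha=0$. Fix a generic lattice point $\beta\in\mathfrak{z}^*_\mathbb{Z}$. It should be chosen so that $\beta$ lies outside the finite union of hyperplanes $D'_\mathbb{R}$ from the proof of Lemma~\ref{lemma:generic_beta and stability}, and so that it is the point provided by Assumption~\ref{assump:standing_assumptions}~(2), at which $G_\mathbb{C}$ acts freely on $M^{\beta-s}$. Set $\zeta=(\beta,0)\in\mathfrak{z}^*\oplus\mathfrak{z}^*_\mathbb{C}$, so that $X_\zeta=Y_\beta$ and $\zeta_0=(0,0)$, which gives $X_{\zeta_0}=X_0$.

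The first step is to identify $Y_\beta\simeq\mu_\mathbb{C}^{-1}(0)/\!/_\beta G_\mathbb{C}$ as $I$-holomorphic symplectic varieties. This follows from Theorem~\ref{thm:King} together with Remark~\ref{rem:to King}. The second step is to apply Proposition~\ref{prop:smoothness of generic beta-GIT quot} with $\alpha=0$. This gives that the semistable and stable loci of $V=\mu_\mathbb{C}^{-1}(0)$ coincide, that $\emptyset\neq V^s\subset V^{\beta-s}$ by Assumption~(1), and that $G_\mathbb{C}$ acts freely on $V^{\beta-s}\subset M^{\beta-s}$. Hence $Y_\beta$ is non-singular.

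The third step is to invoke Proposition~\ref{prop:crepant resol and HK quot} with $\alpha=0$. It shows that the morphism $\nu\colon V/\!/_\beta G_\mathbb{C}\to V/\!/G_\mathbb{C}$, coming from $R^{G_\mathbb{C}}\subset\bigoplus_n R^{G_\mathbb{C}}_{\chi^n}$, is projective and birational, because it is an isomorphism over the dense open set $V^s/G_\mathbb{C}$. It also preserves the holomorphic symplectic forms, so it is crepant. Via King's homeomorphism for $\beta=0$, $\nu$ is identified with $\pi_I\colon Y_\beta\to X_0$.

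The main point requiring care is compatibility of genericity. The $\beta$ used in Assumption~(2) must also avoid $D'_\mathbb{R}$, so that semistable equals stable. If necessary, I would replace $\beta$ by another lattice point in the same open chamber. Such a point gives the same stable locus, so freeness persists. A second point is density of $V^s/G_\mathbb{C}$ in both quotients, which needs irreducibility of the relevant component. I would take this as in the proof of Proposition~\ref{prop:crepant resol and HK quot}, and restrict to the main component if $V$ is reducible.
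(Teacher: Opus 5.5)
Your proposal is correct and follows the paper's proof: the paper also gets non-singularity of $Y_\beta$ from Proposition~\ref{prop:smoothness of generic beta-GIT quot}, then applies Proposition~\ref{prop:crepant resol and HK quot} with $\alpha=0$ to obtain the projective crepant resolution $\pi_I\colon Y_\beta\to X_0$. Your extra caveats are harmless but not needed here. The word ``generic'' in Assumption~\ref{assump:standing_assumptions}~(2) already places $\beta$ off the walls of Lemma~\ref{lemma:generic_beta and stability}, so you never need to replace $\beta$, and the density of $V^s/G_\mathbb{C}$ is simply inherited from Proposition~\ref{prop:crepant resol and HK quot}, as in the paper.
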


\begin{proof}
  By Proposition~\ref{prop:smoothness of generic beta-GIT quot}, $Y_\beta$ is non-singular for a generic $\beta \in\mathfrak{z}^*_\mathbb{Z}$. Applying Proposition~\ref{prop:crepant resol and HK quot} with $\alpha=0$, the natural map $\pi_I \colon Y_\beta \to X_0$ gives a projective crepant resolution.
\end{proof}

\subsection{Poisson Deformations of Hyperkähler Quotients}\label{subsec:PD of HK quot}\quad

In this subsection, we describe the relationship between hyperkähler quotients and Poisson deformations, and clarify the properties of their period maps.

\subsubsection{The Kirwan Map and the Period Map}

We first define the Kirwan map, which provides a topological description of the period map for Poisson deformations.
\begin{definition}\label{def:Kirwan map}
  Consider Setup~\ref{setup:HK quot}. For a generic lattice point $\beta \in\mathfrak{z}^*_\mathbb{Z}$, let $Y_\beta \coloneqq \mu_\mathbb{C}^{-1}(0)/\!/_\beta G_\mathbb{C}$. Assume that $G_\mathbb{C}$ acts freely on $\mu_\mathbb{C}^{-1}(0)^{\beta-ss}$ (and consequently $Y_\beta$ is non-singular).  Then, we define the morphism $\kappa_{\beta,\mathbb{Z}}$ induced by the inclusion map $\mu_\mathbb{C}^{-1}(0)^{\beta-ss} \subset M$ as follows:
  \begin{equation}
    \kappa_{\beta,\mathbb{Z}} \colon \mathfrak{z}^*_\mathbb{Z}\simeq H^2_{G_\mathbb{C}}(M;\mathbb{Z})\to H^2_{G_\mathbb{C}}(\mu_\mathbb{C}^{-1}(0)^{\beta-ss};\mathbb{Z}) \simeq H^2(Y_\beta;\mathbb{Z}).
  \end{equation}
  We call this morphism $\kappa_{\beta,\mathbb{Z}}$ the (degree $2$) Kirwan map of $Y_\beta$.
\end{definition}

\begin{remark}\label{rem:Kirwan map}
  \begin{enumerate}
    \item The isomorphism of the target spaces follows from the fact that $G_\mathbb{C}$ acts freely on $\mu_\mathbb{C}^{-1}(0)^{\beta-ss}$.
    \item Since $M=\mathbb{H}^n$ is contractible, $H^2_{G_\mathbb{C}}({M};\mathbb{Z})\simeq\mathfrak{z}^*_\mathbb{Z}$ holds.
    \item We will also refer to its complexification $\kappa_\beta \coloneqq \kappa_{\beta,\mathbb{Z}}\otimes_\mathbb{Z}\mathbb{C} \colon \mathfrak{z}^*_\mathbb{C}\to H^2(Y_\beta;\mathbb{C})$ simply as the Kirwan map.
  \end{enumerate}
\end{remark}

\begin{remark}\label{rem:meaning of Kirwan map}
  Geometrically, the Kirwan map $\kappa_{\beta,\mathbb{Z}}$ is interpreted as follows. For any lattice point $\beta' \in\mathfrak{z}^*_\mathbb{Z}$, let $\chi'$ be the corresponding character of $G_\mathbb{C}$. Let $L_{\chi'} \to Y_\beta$ be the line bundle associated with the principal $G_\mathbb{C}$-bundle $\mu_\mathbb{C}^{-1}(0)^{\beta-ss} \to Y_\beta$ via the character $\chi'$. Then, the Kirwan map is explicitly given by $\kappa_{\beta,\mathbb{Z}}(\beta') = c_1(L_{\chi'}) \in H^2(Y_\beta;\mathbb{Z})$.
\end{remark}

Under Assumption~\ref{assump:standing_assumptions}, the following proposition by Nagaoka \cite{Nagaoka21} guarantees that the complex moment map yields a Poisson deformation whose period map is surjective.
\begin{proposition}[{Nagaoka \cite{Nagaoka21}}]\label{prop:moment map and PD}
  Consider Setup~\ref{setup:HK quot}. Take a generic lattice point $\beta \in \mathfrak{z}^*_\mathbb{Z}$ satisfying Assumption~\ref{assump:standing_assumptions}.
  Then, the morphism
  \[\varphi_\beta \colon \mathcal{Y}^\beta \coloneqq \mu_\mathbb{C}^{-1}(\mathfrak{z}^*_\mathbb{C})/\!/_\beta G_\mathbb{C}\to \mathfrak{z}^*_\mathbb{C}\]
  induced by the complex moment map $\mu_\mathbb{C} \colon M \to\mathfrak{g}^*_\mathbb{C}$ defines a Poisson deformation of $Y_\beta \coloneqq \mu_\mathbb{C}^{-1}(0)/\!/_\beta G_\mathbb{C}$.
  Furthermore, its period map $\mathfrak{z}^*_\mathbb{C}\to H^2(Y_\beta;\mathbb{C})$ coincides with the Kirwan map $\kappa_\beta$, and is surjective by Assumption~\ref{assump:standing_assumptions}.
\end{proposition}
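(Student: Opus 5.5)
The argument has three parts: show that $\varphi_\beta$ is a Poisson deformation of $Y_\beta$, identify its period map with $\kappa_\beta$, and deduce surjectivity from Assumption~\ref{assump:standing_assumptions}~(3).

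\textbf{Flatness and fibers.} Set $V \coloneqq \mu_\mathbb{C}^{-1}(\mathfrak{z}^*_\mathbb{C})\subset M$. The map $\mu_\mathbb{C}$ descends to $\varphi_\beta \colon V/\!/_\beta G_\mathbb{C}\to\mathfrak{z}^*_\mathbb{C}$, because $\mu_\mathbb{C}$ is $G_\mathbb{C}$-invariant on $V$. For each $\alpha$, Lemma~\ref{lemma:generic_beta and stability} gives $V^{\beta-ss}=V^{\beta-s}$, and by Assumption~\ref{assump:standing_assumptions}~(2) the group $G_\mathbb{C}$ acts freely there. Hence $V^{\beta-s}\to\mathcal{Y}^\beta$ is a principal $G_\mathbb{C}$-bundle.

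Freeness gives surjectivity of $d\mu_\mathbb{C}$ onto $\mathfrak{g}^*_\mathbb{C}$ at every point of $V^{\beta-s}$, since its image is the annihilator of the stabilizer Lie algebra. Therefore $\mu_\mathbb{C}|_{V^{\beta-s}}$ is a smooth morphism onto $\mathfrak{z}^*_\mathbb{C}$, and so is $\varphi_\beta$. Each fiber $\varphi_\beta^{-1}(\alpha)=\mu_\mathbb{C}^{-1}(\alpha)/\!/_\beta G_\mathbb{C}$ is non-singular by Proposition~\ref{prop:smoothness of generic beta-GIT quot}. The central fiber is $Y_\beta$. The $\mathbb{C}^*$-action of weight $1$ on $M$ makes $\varphi_\beta$ equivariant for the weight-$2$ action on the base, so the family is conical.

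\textbf{Poisson structure.} Fiberwise symplectic reduction of $\omega_\mathbb{C}$, done by Marsden--Weinstein on the free locus, gives a relative symplectic form $\Omega$ on $\mathcal{Y}^\beta/\mathfrak{z}^*_\mathbb{C}$. Equivalently, the Poisson bracket on $\mathbb{C}[M]$ descends to the invariant sections $\bigoplus_n R^{G_\mathbb{C}}_{\chi^n}$. The functions pulled back from $\mathfrak{z}^*_\mathbb{C}$ are Casimirs, since $\mathfrak{z}$ acts trivially. This makes $\mathcal{Y}^\beta\to\mathfrak{z}^*_\mathbb{C}$ a Poisson deformation of $Y_\beta$.

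\textbf{Period map equals Kirwan map.} This is the main step. I would use the standard Duistermaat--Heckman-type argument for reduced forms. Trivialize the family smoothly, as in Remark~\ref{remark:period map for Poisson deformation}, and choose a connection $\theta$ on the principal bundle $V^{\beta-s}\to\mathcal{Y}^\beta$. On $V^{\beta-s}$ one computes
\[
d\langle\mu_\mathbb{C}-\alpha,\theta\rangle=\langle d\mu_\mathbb{C},\theta\rangle+\langle\mu_\mathbb{C}-\alpha,d\theta\rangle.
\]
From this it follows that the reduced forms at $\alpha$ and at $0$ differ, in cohomology of the total space restricted to fibers, by $\langle\alpha,F_\theta\rangle$. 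Here $F_\theta$ is the curvature, and it represents $2\pi i$ times the equivariant class via Chern--Weil.

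By Remark~\ref{rem:meaning of Kirwan map}, $\langle\alpha,[F_\theta]/2\pi i\rangle=\kappa_\beta(\alpha)$ up to the normalization fixed in Definition~\ref{def:character lattice}. So $[\Omega_\alpha]=[\Omega_0]+\kappa_\beta(\alpha)=\kappa_\beta(\alpha)$. The last equality holds because $[\Omega_0]=0$: the form $\Omega_0$ has weight $2$ under the $\mathbb{C}^*$-action, and $\mathbb{C}^*$ acts trivially on cohomology. The delicate point is controlling this identity uniformly along the smooth trivialization, i.e.\ making sure the curvature class is identified compatibly in every fiber. I would handle this by working on the total space $\mathcal{Y}^\beta$, which retracts onto $Y_\beta$, and noting that $\Omega$ extends to a closed form there.

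\textbf{Conclusion.} Surjectivity of the period map is then exactly Assumption~\ref{assump:standing_assumptions}~(3).
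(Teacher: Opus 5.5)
Your proposal is correct and follows the same route as the paper: you show that $\varphi_\beta$ is a flat family of smooth fiberwise symplectic reductions, identify its period map with $\kappa_\beta$ by a Duistermaat--Heckman argument, and conclude from Assumption~\ref{assump:standing_assumptions}~(3); the paper cites Losev's algebraic version (Prop.~3.2.1 of \cite{Losev12}) where you do the connection--curvature computation yourself, and your submersion argument on the free locus gives smoothness of $\varphi_\beta$, hence flatness, more directly than the paper's fiber-dimension flatness criterion. One correction to your last step: the relative form $\Omega$ itself does not extend to a closed form on $\mathcal{Y}^\beta$ (that would force the period map to be constant); what extends is, for each $\alpha$, the form descended from the basic form $\omega_\mathbb{C}+d\langle\mu_\mathbb{C}-\alpha,\theta\rangle$ on $\mu_\mathbb{C}^{-1}(\mathfrak{z}^*_\mathbb{C})^{\beta-s}$, which restricts to $\Omega_\alpha$ on the fiber over $\alpha$ and differs from the $\alpha=0$ form by $\langle\alpha,F_\theta\rangle$ up to sign.
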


\begin{proof}
  We follow the argument by Nagaoka \cite{Nagaoka21}. We show the following two claims in order:
  \begin{enumerate}[wide=\parindent, itemsep=\medskipamount]
    \item $\mathcal{Y}^\beta$ is a Poisson deformation:\quad
    By Proposition~\ref{prop:crepant resol and HK quot}, $Y_\beta \coloneqq \mu_\mathbb{C}^{-1}(0)/\!/_\beta G_\mathbb{C}$ forms a projective crepant resolution of the central quotient $X_0 \coloneqq \mu_\mathbb{C}^{-1}(0)/\!/ G_\mathbb{C}$.
    By Proposition~\ref{prop:smoothness of generic beta-GIT quot}, each fiber of the morphism $\varphi_\beta$ is non-singular over $\mathfrak{z}^*_\mathbb{C}$.
    Since the dimension of each fiber is constant, by the flat criterion {\cite[Cor.~14.128]{Görtz-Wedhorn10}}, $\mathcal{Y}^\beta$ is a flat family.
    Moreover, since each fiber has a natural holomorphic symplectic form, it follows that $\mathcal{Y}^\beta$ is a Poisson deformation.

    \item The period map of $\mathcal{Y}^\beta$ coincides with the Kirwan map:\quad
    Using the algebro-geometric version of the Duistermaat--Heckman theorem, known in real symplectic geometry {\cite[Prop.~3.2.1]{Losev12}}, it follows that the period map of $\mathcal{Y}^\beta$ (cf.~Remark~\ref{remark:period map for Poisson deformation}) coincides with the Kirwan map $\kappa_\beta$.
  Now, by Assumption~\ref{assump:standing_assumptions}, the Kirwan map $\kappa_\beta$ is surjective, so the period map of the Poisson deformation $\mathcal{Y}^\beta$ is also surjective. Consequently, the assertion follows.
  \end{enumerate}
\end{proof}

\subsubsection{Discriminant Locus and Propagation of Period-surjectivity}\quad

Proposition \ref{prop:moment map and PD} provides a period-surjective Poisson deformation for a specific generic lattice point $\beta$. In order to characterize the non-singularity of hyperkähler quotients and propagate the surjectivity of the Kirwan map to other lattice points, we consider the affinization of the family $\mathcal{Y}^\beta$. The discriminant locus $D'$ of this affine family will serve as the foundation for determining the wall-chamber structure in the next subsection.

\begin{lemma}\label{lemma:to Xcal'}
  Similarly to Proposition~\ref{prop:moment map and PD}, let $\mathcal{X}'$ denote the morphism obtained when $\beta=0$:
  \[\mathcal{X}' \coloneqq \mu_\mathbb{C}^{-1}(\mathfrak{z}^*_\mathbb{C})/\!/G_\mathbb{C}\to \mathfrak{z}^*_\mathbb{C}.\]
  Then, $\mathcal{X}'$ defines an $I$-holomorphic Poisson deformation of the central quotient $X_0$.
\end{lemma}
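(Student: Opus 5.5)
The plan follows the template of Proposition~\ref{prop:moment map and PD}, with $\beta$ replaced by $0$. The family in question is
\[
\varphi_0 \colon \mathcal{X}' = \mu_\mathbb{C}^{-1}(\mathfrak{z}^*_\mathbb{C})/\!/G_\mathbb{C} = \operatorname{Spec}\bigl(\mathbb{C}[\mu_\mathbb{C}^{-1}(\mathfrak{z}^*_\mathbb{C})]^{G_\mathbb{C}}\bigr)\to \mathfrak{z}^*_\mathbb{C}.
\]
I need to show three things: the central fiber is $X_0$, the family is flat, and it carries a relative Poisson structure restricting to the given one on $X_0$.

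\textbf{Step 1 (fibers).} Because $G_\mathbb{C}$ is reductive, taking invariants is exact. Hence forming the affine quotient commutes with base change along $\mathfrak{z}^*_\mathbb{C}$, which means
\[
\varphi_0^{-1}(\alpha)=\mu_\mathbb{C}^{-1}(\alpha)/\!/G_\mathbb{C}.
\]
By Theorem~\ref{thm:King} with $\beta=0$, this fiber is $X_{(0,\alpha)}$. In particular the fiber over $0$ is $X_0$.

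\textbf{Step 2 (flatness).} This is the step I expect to be the main obstacle, since the fibers of $\mathcal{X}'$ are singular and I cannot invoke smoothness as in Proposition~\ref{prop:moment map and PD}. I would first try to compare with $\mathcal{Y}^\beta$. The natural map $\nu\colon \mathcal{Y}^\beta\to\mathcal{X}'$ is projective and is fiberwise birational, by Proposition~\ref{prop:crepant resol and HK quot} applied for every $\alpha$ using Lemma~\ref{lemma:V_alpha is not empty}. Since crepant resolutions of symplectic singularities have $R^i\nu_*\mathcal{O}=0$ for $i>0$ (Grauert--Riemenschneider), I expect to get $\mathcal{X}'=\operatorname{Spec}\Gamma(\mathcal{Y}^\beta,\mathcal{O})$. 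Then $\nu_*\mathcal{O}_{\mathcal{Y}^\beta}$ would commute with base change, and flatness of $\mathcal{X}'$ would follow from flatness of $\mathcal{Y}^\beta$.

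As an alternative, I would use the graded structure. The ring $\mathbb{C}[\mathcal{X}']$ is a positively graded finitely generated algebra over $\mathbb{C}[\mathfrak{z}^*_\mathbb{C}]$, with the linear coordinates placed in degree $2$. Each fiber has dimension $\dim X_0$, and the total space $\mathcal{X}'$ is normal and Cohen--Macaulay; by Boutot, a quotient of a variety with rational singularities has rational singularities. Under these conditions the miracle flatness criterion (as in {\cite[Cor.~14.128]{Görtz-Wedhorn10}}) gives flatness. Equidimensionality of the fibers would come from the $\mathbb{R}^+$-scaling argument of Lemma~\ref{lemma:V_alpha is not empty}, together with the birationality obtained in Step 1.

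\textbf{Step 3 (Poisson structure).} The ring $\mathbb{C}[\mu_\mathbb{C}^{-1}(\mathfrak{z}^*_\mathbb{C})]^{G_\mathbb{C}}$ inherits a Poisson bracket by Hamiltonian reduction of the $I$-holomorphic symplectic form on $M$. The components of $\mu_\mathbb{C}$ along $\mathfrak{z}$ are Poisson central, since they generate the action of the center, which acts trivially on invariants. So the bracket is $\mathbb{C}[\mathfrak{z}^*_\mathbb{C}]$-linear and defines a relative Poisson structure. By Remark~\ref{rem:to King} it restricts to the standard Poisson structure on $X_0$, which completes the proof that $\mathcal{X}'$ is an $I$-holomorphic Poisson deformation of $X_0$.
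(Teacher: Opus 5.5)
Your main route in Step~2 is the paper's argument. The paper's proof consists only of the observation that $\nu_\beta\colon\mathcal{Y}^\beta\to\mathcal{X}'$ is the affinization of the Poisson deformation $\mathcal{Y}^\beta$, so $\mathcal{X}'$ inherits the structure of a Poisson deformation of $X_0$. Your Steps~1 and~3 (base change for reductive quotients, Hamiltonian reduction for the bracket) and your Grauert--Riemenschneider/base-change argument spell out what the paper leaves implicit.

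Two corrections. First, $R^i\nu_*\mathcal{O}=0$ gives flatness and base change for $\nu_*\mathcal{O}_{\mathcal{Y}^\beta}$, but it does not by itself give $\nu_*\mathcal{O}_{\mathcal{Y}^\beta}=\mathcal{O}_{\mathcal{X}'}$. You can get that identification by combining $\Gamma(Y_\beta,\mathcal{O})=\mathbb{C}[X_0]$ (valid by normality of $X_0$) with base change at $\alpha=0$ and graded Nakayama. Second, your Boutot alternative is unsupported, because it would need $\mu_\mathbb{C}^{-1}(\mathfrak{z}^*_\mathbb{C})$ to have rational singularities, and that is not known in this generality.
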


\begin{proof}
  The natural morphism $\nu_\beta \colon \mathcal{Y}^\beta \to\mathcal{X}'$ obtained by Proposition~\ref{prop:crepant resol and HK quot} is actually the affinization of the Poisson deformation $\mathcal{Y}^\beta$.
  Therefore, it follows that $\mathcal{X}'$ is an $I$-holomorphic Poisson deformation of the central quotient $X_0$.
\end{proof}

\begin{remark}\label{rem:discriminant locus on z^*_C}
  In what follows, we denote the discriminant locus on $\mathfrak{z}^*_\mathbb{C}$ for the affine family $\mathcal{X}'$ by $D'$.
\end{remark}

Using this affine family $\mathcal{X}'$, we can characterize the smoothness of the hyperkähler quotient of type $X_{(\zeta_I,0,0)}$ via its discriminant locus.

\begin{lemma}\label{lemma:discriminant locus and HK moment map}
  Consider Setup~\ref{setup:HK quot} and Assumption~\ref{assump:standing_assumptions}.
  Let $D'\subset \mathfrak{z}^*_\mathbb{C}$ be the discriminant locus.
  Let its real part be $D'_\mathbb{R} \subset \mathfrak{z}^*$. Then, for any point $\zeta_I \in \mathfrak{z}^*$, we have $\zeta_I\notin D'_\mathbb{R}$ if and only if $X_{(\zeta_I,0,0)}$ is non-singular.
\end{lemma}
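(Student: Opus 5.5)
The plan is to rotate the hyperkähler parameter and move the point $\zeta_I$ out of the real moment map into the complex moment map. Then non-singularity of $X_{(\zeta_I,0,0)}$ becomes non-singularity of a fiber of the affine family $\mathcal{X}'$, which is exactly what the discriminant locus $D'$ detects.

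First I apply a hyperkähler rotation of $M=\mathbb{H}^n$ that exchanges the roles of $I$ and $J$, for instance by right multiplication by a suitable unit quaternion. Such a rotation commutes with the $Sp(n)$-action and hence with $G$. It transforms the standard moment map $\mu=(\mu_I,\mu_J,\mu_K)$ into a permuted version, up to signs. Under this rotation the level set $\mu^{-1}(\zeta_I,0,0)$ is carried to the level set $\mu^{-1}(0,\zeta_I,0)$, possibly after a sign change of $\zeta_I$. The sign is harmless, because $D'$ is a cone: by the $\mathbb{C}^*$-equivariance of $\mathcal{X}'\to\mathfrak{z}^*_\mathbb{C}$ coming from $r^*\mu_\mathbb{C}=r^2\mu_\mathbb{C}$, the locus $D'$ is invariant under $\alpha\mapsto -\alpha$. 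This gives a $G$-equivariant isometry $X_{(\zeta_I,0,0)}\simeq X_{(0,\zeta_I,0)}$, so it suffices to show that $X_{(0,\alpha,0)}$ is non-singular if and only if $\alpha\notin D'$, where $\alpha=\zeta_I+i\cdot 0\in\mathfrak{z}^*\subset\mathfrak{z}^*_\mathbb{C}$ is the complex moment-map value.

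Next I apply Theorem~\ref{thm:King} with $\beta=0$. It gives an $I$-biholomorphism $X_{(0,\alpha,0)}\simeq\mu_\mathbb{C}^{-1}(\alpha)/\!/G_\mathbb{C}$, and the right-hand side is precisely the fiber $\mathcal{X}'_\alpha$ of the affine family of Lemma~\ref{lemma:to Xcal'}. By Remark~\ref{rem:discriminant locus on z^*_C}, $\alpha\notin D'$ if and only if $\mathcal{X}'_\alpha$ is non-singular. Since $X_{(0,\alpha,0)}$ is homeomorphic, indeed isomorphic as $I$-holomorphic varieties, to $\mathcal{X}'_\alpha$, the two non-singularity conditions agree. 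This proves the equivalence, with $\zeta_I\in D'_\mathbb{R}=D'\cap\mathfrak{z}^*$ understood via the real embedding.

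The main obstacle is making sure that "non-singular" means the same thing on both sides. For $X_{(0,\alpha,0)}$ this should be smoothness as a hyperkähler quotient, i.e.\ $G$ acting freely on the level set. For $\mathcal{X}'_\alpha$ it is algebraic smoothness of the affine GIT quotient. One direction is clear: if $G$ acts freely on $\mu^{-1}(0,\alpha,0)$, then the hyperkähler quotient is a manifold and the Kempf--Ness isomorphism transfers smoothness. For the converse, if the affine quotient is singular, I would argue that a point with a nontrivial stabilizer yields a singular point. To do this I would use Proposition~\ref{prop:crepant resol and HK quot} together with the resolution $\nu_\beta$ of Lemma~\ref{lemma:to Xcal'}: over a non-singular affine target the crepant resolution is an isomorphism (Remark~\ref{rem:for crepant resol of HK quot}), while over a singular one it has a positive-dimensional fiber. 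If the Kempf--Ness homeomorphism is only topological, I would fall back on this characterization, namely non-affineness versus isomorphism of $\nu_\beta$ on the fiber.
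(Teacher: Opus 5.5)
Your proposal is correct and follows the paper's proof: rotate $(\zeta_I,0,0)$ to $(0,\zeta_I,0)$, identify $X_{(0,\zeta_I,0)}$ with the fiber $\mathcal{X}'_{\zeta_I}$ via Theorem~\ref{thm:King} at $\beta=0$, and read off non-singularity from the definition of $D'$. You take more care than the paper on two points. The sign issue can be avoided anyway by choosing the rotation with $i\mapsto j$ exactly. On the meaning of ``non-singular'', the paper simply uses Remark~\ref{rem:to King} to treat the Kempf--Ness map as an isomorphism of $I$-holomorphic varieties, so the fallback via $\nu_\beta$ in your last paragraph is not needed.
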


\begin{proof}
  Assume that $\zeta_I \notin D'_\mathbb{R}$.
  It suffices to show that $X_{(0,\zeta_I,0)}$, obtained by applying a hyperkähler rotation to $X_{(\zeta_I,0,0)}$, is non-singular.
  By Theorem~\ref{thm:King}, we have $X_{(0,\zeta_I,0)}\simeq \mathcal{X}'_{\zeta_I}$.
  Here, since $\zeta_I \notin D'$, $\mathcal{X}'_{\zeta_I}$ is non-singular.
  Therefore, we deduce that $X_{(\zeta_I,0,0)}$ is non-singular.
  The converse follows by the same argument, hence the assertion of the lemma holds.
\end{proof}

By avoiding this discriminant locus, the surjectivity of the Kirwan map propagates to other lattice points.

\begin{lemma}\label{lemma:all Kirwan surj}
  Suppose that Assumption~\ref{assump:standing_assumptions} holds. Then, for any lattice point $\beta \in \mathfrak{z}^*_\mathbb{Z} \setminus D'_\mathbb{R} \subset \mathfrak{z}^*$, $\kappa_\beta$ is surjective. That is, by Proposition~\ref{prop:moment map and PD}, a period-surjective Poisson deformation $\mathcal{Y}^\beta\to\mathfrak{z}^*_\mathbb{C}$ of $Y_\beta$ is obtained.
\end{lemma}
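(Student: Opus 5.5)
Fix the generic lattice point $\beta_0$ of Assumption~\ref{assump:standing_assumptions}, for which $\kappa_{\beta_0}$ is known to be surjective, and let $\beta \in \mathfrak{z}^*_\mathbb{Z}\setminus D'_\mathbb{R}$ be arbitrary. The strategy is to compare $Y_\beta$ and $Y_{\beta_0}$ through the fiber of the affine family $\mathcal{X}'$ over a point of $\mathfrak{z}^*_\mathbb{C}$ off the discriminant locus. Over such a point both resolutions become isomorphic to the same smooth affine fiber. This isomorphism is compatible with the line bundles $L_{\chi'}$ of Remark~\ref{rem:meaning of Kirwan map}. Since all fibers of the families $\mathcal{Y}^\beta$ and $\mathcal{Y}^{\beta_0}$ are smoothly trivial (Slodowy), the Kirwan maps of $Y_\beta$ and $Y_{\beta_0}$ are then intertwined by a linear isomorphism $H^2(Y_\beta;\mathbb{C})\simeq H^2(Y_{\beta_0};\mathbb{C})$, and surjectivity transfers.

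\textbf{Step 1 (smoothness of the family).} By Lemma~\ref{lemma:discriminant locus and HK moment map}, $\zeta_I=\beta\notin D'_\mathbb{R}$ gives that $Y_\beta=X_{(\beta,0,0)}$ is non-singular. I expect to also need $G_\mathbb{C}$ to act freely on $\mu_\mathbb{C}^{-1}(\alpha)^{\beta-ss}$ for every $\alpha$, so that Definition~\ref{def:Kirwan map} applies and $\mathcal{Y}^\beta\to\mathfrak{z}^*_\mathbb{C}$ is a smooth family. I would get this by a Hilbert--Mumford/wall argument: $\beta\notin D'_\mathbb{R}$ forces semistable equals stable, and a nontrivial finite stabilizer would yield an orbifold point, hence a singular point of $X_{(\beta,0,0)}$ by Theorem~\ref{thm:King}, contradicting Step 1's non-singularity.

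\textbf{Step 2 (identification at a generic point).} Choose $\alpha\in\mathfrak{z}^*_\mathbb{C}\setminus D'$. Then $\mathcal{X}'_\alpha$ is smooth, so by Proposition~\ref{prop:crepant resol and HK quot} and Remark~\ref{rem:for crepant resol of HK quot}(1) the maps $\mathcal{Y}^\beta_\alpha\to\mathcal{X}'_\alpha\leftarrow\mathcal{Y}^{\beta_0}_\alpha$ are isomorphisms. Moreover, $\mu_\mathbb{C}^{-1}(\alpha)^{\beta-ss}=\mu_\mathbb{C}^{-1}(\alpha)^{s}=\mu_\mathbb{C}^{-1}(\alpha)^{\beta_0-ss}$, because all orbits are closed with trivial stabilizer there. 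Hence the principal $G_\mathbb{C}$-bundles coincide, and so do the associated bundles $L_{\chi'}$. Therefore the Kirwan maps of the two fibers, defined as in Remark~\ref{rem:meaning of Kirwan map} via the inclusion into $M$, agree under this identification.

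\textbf{Step 3 (transport).} Using the smooth trivializations $Y_\beta\simeq\mathcal{Y}^\beta_\alpha$ and $Y_{\beta_0}\simeq\mathcal{Y}^{\beta_0}_\alpha$ along a path from $0$ to $\alpha$, I obtain diffeomorphisms compatible with the restrictions of the equivariant classes $c_1(L_{\chi'})$ on the total spaces. Thus $\kappa_\beta=\Phi\circ\kappa_{\beta_0}$ for a linear isomorphism $\Phi$, so $\kappa_\beta$ is surjective. Proposition~\ref{prop:moment map and PD}, whose proof only used smoothness and Kirwan surjectivity at $\beta$, then yields a period-surjective Poisson deformation $\mathcal{Y}^\beta$.

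\textbf{Main obstacle.} The delicate point is Step 1: upgrading non-singularity of the central fiber to freeness of the action on the semistable loci of all fibers, and with it flatness and smoothness of $\mathcal{Y}^\beta$. I would first try a $\mathbb{C}^*$-scaling argument. Since $\mu_\mathbb{C}$ has weight $2$, any bad point in a fiber over $\alpha$ would have a limit in the central fiber under scaling, where non-freeness is excluded.
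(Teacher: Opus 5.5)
Your argument is correct at the paper's level of rigor, but it takes a genuinely different route.

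\textbf{How the two routes differ.} The paper stays inside Poisson deformation theory. It notes that the $\mathbb{C}^*$-action makes the period map of $\mathcal{Y}^\beta$ linear, and that $\mathcal{Y}^\beta\to\mathcal{X}'$ is the affinization. This is the same affine family $\mathcal{X}'$ that serves as the affinization of the period-surjective family $\mathcal{Y}^{\beta_0}$. From these two facts it concludes that the period map of $\mathcal{Y}^\beta$, hence $\kappa_\beta$, is onto.

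You instead compare the two GIT quotients over a single fiber, where they are quotients of the same free $G_\mathbb{C}$-space with the same line bundles $L_{\chi'}$. You then transport this along Slodowy trivializations of the two smooth families.

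\textbf{What each route buys.} Your route needs only GIT, associated line bundles and topology. No universal Poisson deformation is needed, and the period map enters only in your final sentence. It also proves more than surjectivity: $\kappa_\beta=\Phi\circ\kappa_{\beta_0}$ for an isomorphism $\Phi$ induced by a diffeomorphism, which even holds integrally. This is the kind of compatibility the paper later uses without proof in Step~(2) of the proof of Theorem~\ref{main thm:Torelli-type thm for HK quot}. The paper's route is shorter, but it yields only surjectivity, and its key step (linearity plus a shared affinization forces surjectivity) is left implicit.

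\textbf{Step 2 needs tightening.} For an arbitrary $\alpha\notin D'$, smoothness of $\mathcal{X}'_\alpha$ does not by itself show that every orbit in $\mu_\mathbb{C}^{-1}(\alpha)$ is closed and free. Take $\alpha=\beta_0$ instead.
\begin{itemize}
\item By Assumption~\ref{assump:standing_assumptions}~(2), $G$ acts freely on $\mu^{-1}(\beta_0,0,0)$.
\item A $G$-equivariant hyperkähler rotation carries $\mu^{-1}(\beta_0,0,0)$ onto $\mu^{-1}(0,\beta_0,0)$, so $G$ acts freely there too.
\item By Kempf--Ness, every closed $G_\mathbb{C}$-orbit in $\mu_\mathbb{C}^{-1}(\beta_0)$ is therefore free. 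Hence every orbit there is closed and free.
\item Consequently $\mu_\mathbb{C}^{-1}(\beta_0)^{\beta-ss}=\mu_\mathbb{C}^{-1}(\beta_0)=\mu_\mathbb{C}^{-1}(\beta_0)^{\beta_0-ss}$.
\end{itemize}

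\textbf{Step 1 is only sketched.} Your deduction of ``semistable $=$ stable, with free action'' from $\beta\notin D'_\mathbb{R}$ is incomplete. A nontrivial finite stabilizer produces a singular point only if it acts nontrivially on the symplectic slice, and positive-dimensional stabilizers need a separate argument. The paper tacitly uses exactly the same input when it says $\mathcal{Y}^\beta$ is ``constructed similarly''. So this is not a gap relative to the paper, but it is where a complete write-up of either argument still needs work.
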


\begin{proof}
  By Lemma~\ref{lemma:discriminant locus and HK moment map}, $Y_{\beta}$ is non-singular.
  For any lattice point $\beta \in \mathfrak{z}^*_\mathbb{Z} \setminus D'_\mathbb{R} \subset \mathfrak{z}^*$, the Poisson deformation $\mathcal{Y}^\beta \to\mathfrak{z}^*_\mathbb{C}$ of $Y_\beta$ is constructed similarly to Proposition~\ref{prop:moment map and PD}.
  Now, since $\mathcal{Y}^\beta$ has a natural $\mathbb{C}^*$-action, the period map is linear.
  Combined with the fact that the natural morphism $\mathcal{Y}^\beta \to \mathcal{X}'$ by Proposition~\ref{prop:crepant resol and HK quot} determines the affinization, we conclude that the period map is surjective. Since the Kirwan map coincides with the period map, $\kappa_\beta$ is surjective in particular.
\end{proof}

Finally, by utilizing the natural $\mathbb{R}^+$-action on $M$, we scale the parameters to obtain a period-surjective Poisson deformation for any generic rational point.

\begin{proposition}\label{prop:PD for rational point}
  Consider Setup~\ref{setup:HK quot} and Assumption~\ref{assump:standing_assumptions}.
  Let $\mathfrak{z}^*_\mathbb{Q} \coloneqq \mathfrak{z}^*_\mathbb{Z}\otimes_\mathbb{Z}\mathbb{Q}$ denote the scalar extension to $\mathbb{Q}$ of the character lattice $\mathfrak{z}^*_\mathbb{Z}$.
  Then, for any rational point $\beta \in \mathfrak{z}^*_\mathbb{Q} \setminus D'_\mathbb{R}\subset\mathfrak{z}^*$, the family $\mathcal{Y}^\beta \coloneqq \{X_{(\beta,\alpha)}\}_{\alpha\in \mathfrak{z}^*_\mathbb{C}}\to\mathfrak{z}^*_\mathbb{C}$ is a period-surjective Poisson deformation of $Y_\beta \coloneqq (X_{(\beta,0,0)},I)$.
\end{proposition}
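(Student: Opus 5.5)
The plan is to reduce the rational case to the lattice case of Lemma~\ref{lemma:all Kirwan surj} using the natural $\mathbb{R}^+$-action on $M$. Since $\beta \in \mathfrak{z}^*_\mathbb{Q}$, we can choose a positive integer $N$ with $N\beta \in \mathfrak{z}^*_\mathbb{Z}$. The discriminant locus $D'\subset\mathfrak{z}^*_\mathbb{C}$ is invariant under positive real scaling: the family $\mathcal{X}'$ carries the $\mathbb{C}^*$-action induced from the scalar action on $M$, and $r^*\mu_\mathbb{C} = r^2\mu_\mathbb{C}$ identifies the fibre $\mathcal{X}'_\alpha$ with $\mathcal{X}'_{r^2\alpha}$. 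Hence $D'_\mathbb{R}$ is a cone, and $\beta\notin D'_\mathbb{R}$ implies $N\beta\notin D'_\mathbb{R}$. Lemma~\ref{lemma:all Kirwan surj} then gives that $\mathcal{Y}^{N\beta}\to\mathfrak{z}^*_\mathbb{C}$ is a period-surjective Poisson deformation of $Y_{N\beta}$.

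Next I transport this family back to $\beta$. Let $r\coloneqq N^{1/2}$, and let $m_r\colon M\to M$ be the scalar multiplication by $r$. Since $\mu$ is quadratic, $\mu\circ m_r = r^2\mu = N\mu$. Thus $m_r$ maps $\mu^{-1}(\beta,\alpha)$ $G$-equivariantly and diffeomorphically onto $\mu^{-1}(N\beta,N\alpha)$, and descends to a diffeomorphism
\[
X_{(\beta,\alpha)}\xrightarrow{\ \sim\ } X_{(N\beta,N\alpha)} .
\]
This map is $I$-holomorphic, because $m_r$ commutes with $I$. It pulls back the holomorphic symplectic form $\omega_\mathbb{C}$ to $N\omega_\mathbb{C}$, and the metric to $N$ times the metric. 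Through Theorem~\ref{thm:King}, this identifies the fibres $\mu_\mathbb{C}^{-1}(\alpha)/\!/_{\beta}G_\mathbb{C}$ with $\mu_\mathbb{C}^{-1}(N\alpha)/\!/_{N\beta}G_\mathbb{C}$. Both sides are GIT quotients for the same semistability notion, since the $\beta$- and $N\beta$-semistable loci coincide and $\mathrm{Proj}$ of a Veronese subring is unchanged. So we define $\mathcal{Y}^\beta$ as the pullback of $\mathcal{Y}^{N\beta}$ along the linear automorphism $\alpha\mapsto N\alpha$ of $\mathfrak{z}^*_\mathbb{C}$, and rescale the relative symplectic form by $N^{-1}$. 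By the identification above, this is exactly the family $\{X_{(\beta,\alpha)}\}_{\alpha}$.

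Finally I check the two required properties. A pullback of a flat Poisson family along a linear isomorphism of the base, with the relative symplectic form rescaled by a nonzero constant, is again a Poisson deformation. Its central fibre is $(X_{(\beta,0,0)},I)=Y_\beta$, via the fibrewise isomorphism above. For the period map, the smooth trivialization of Remark~\ref{remark:period map for Poisson deformation} is compatible with the diffeomorphism $Y_\beta\simeq Y_{N\beta}$. The period map of the new family is therefore the composite of $\alpha\mapsto N\alpha$, the period map $\kappa_{N\beta}$ of $\mathcal{Y}^{N\beta}$, multiplication by $N^{-1}$, and the induced isomorphism $H^2(Y_{N\beta};\mathbb{C})\simeq H^2(Y_\beta;\mathbb{C})$. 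Each of these is surjective, so the new period map is surjective.

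I expect the main obstacle to be the bookkeeping in the first step: confirming that $D'_\mathbb{R}$ is genuinely invariant under scaling, so that $N\beta\notin D'_\mathbb{R}$. I would derive this from the $\mathbb{C}^*$-equivariance of $\mathcal{X}'\to\mathfrak{z}^*_\mathbb{C}$, which gives weight $2$ on the base, so that singular fibres are carried to singular fibres. A secondary point is justifying that the rescaled identification is algebraic and not merely a homeomorphism. For this I would rely on the GIT description: $m_r$ is linear and $G_\mathbb{C}$-equivariant, so it induces an isomorphism of the invariant rings.
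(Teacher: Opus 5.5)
Your proposal is correct and follows essentially the paper's argument: the paper also rescales via the $\mathbb{R}^+$-action on $M$, using $r=N$ with $N^2\beta$ integral where you use $r=N^{1/2}$ with $N\beta$ integral, then applies Lemma~\ref{lemma:all Kirwan surj} at the lattice point and identifies $\mathcal{Y}^\beta$ with $(\mathcal{Y}^{N^2\beta},N^{-2}\Omega^{N^2\beta})$ as a Poisson deformation. Your explicit check that $D'_\mathbb{R}$ is invariant under positive scaling, via the $\mathbb{C}^*$-equivariance of $\mathcal{X}'\to\mathfrak{z}^*_\mathbb{C}$, supplies a step the paper simply asserts.
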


\begin{proof}
  For $\zeta\in\mathfrak{z}^*\otimes\mathbb{R}^3$, consider the scalar multiplication by $r>0$ acting as $X_{r^{-2}\zeta} \to X_{\zeta}$, which is induced by the $\mathbb{R}^+$-action on $M$ (cf.~Proposition~\ref{prop:preserveness of asymp HK met}, Proof (2)). Note that for the natural hyperkähler metric $g_\zeta$ on $X_\zeta$, the following equality holds:
  \begin{equation}
    r^*g_{\zeta} = r^2g_{r^{-2}\zeta}. \label{eq:R^+-action}
  \end{equation}
  Consider $Y_\beta \coloneqq (X_{(\beta,0,0)},I)$ and the family $\mathcal{Y}^\beta \coloneqq \{X_{(\beta,\alpha)}\}_{\alpha\in \mathfrak{z}^*_\mathbb{C}}\to\mathfrak{z}^*_\mathbb{C}$ for any rational point $\beta \in \mathfrak{z}^*_\mathbb{Q} \setminus D'_\mathbb{R} \subset \mathfrak{z}^*$.
  Let $\Omega^\beta_{\alpha}$ denote the natural $I$-holomorphic symplectic form on the fiber $\mathcal{Y}_\alpha^\beta$ of the family $\mathcal{Y}^\beta$.
  There exists a positive integer $N \in \mathbb{Z}_{>0}$ such that $N^2\beta \in \mathfrak{z}^*_\mathbb{Z}\setminus D'_\mathbb{R}$.
  Then, by Lemma~\ref{lemma:all Kirwan surj}, the family $\mathcal{Y}^{N^2\beta}$ gives a period-surjective Poisson deformation of $Y_{N^2\beta}$.
  Since the family $\mathcal{Y}^\beta$ and the family $\mathcal{Y}^{N^2\beta}$ are related by the scalar multiplication by $r=N>0$, they are isomorphic as deformations of complex manifolds.
  On the other hand, by relation \eqref{eq:R^+-action}, we have $r^*\Omega^{N^2\beta}_{N^2\alpha}=N^2\Omega^\beta_\alpha$, so the family $\mathcal{Y}^\beta$ is isomorphic to $(\mathcal{Y}^{N^2\beta}, N^{-2}\Omega^{N^2\beta})$ as a Poisson deformation. Here, $(\mathcal{Y}^{N^2\beta}, N^{-2}\Omega^{N^2\beta})$ is a period-surjective Poisson deformation of $Y^\beta \simeq (Y_{N^2\beta}, N^{-2}\Omega_0^{N^2\beta})$.
  Consequently, the assertion follows.
\end{proof}

\subsection{Asymptotic Hyperkähler Metrics on Hyperkähler Quotients}\label{subsec:AHK on HK quot}

In this subsection, we shift our focus to the analytic properties of the natural hyperkähler metric $g_\zeta$ on the general hyperkähler quotient $X_\zeta$. By fixing a projective crepant resolution $Y_{\beta_0}$ of $X_0$ as a reference space, our goal is to construct a diffeomorphism $\Phi_\zeta \colon X_\zeta\to Y_{\beta_0}$ for any non-singular $X_\zeta$, and show that the push-forward metric $(\Phi_\zeta)_*g_\zeta$ is an asymptotic hyperkähler metric on $Y_{\beta_0}$.

\subsubsection{Cone Metrics and General Crepant Resolutions}

First, we recall that the central quotient $X_0$ is equipped with a natural hyperkähler cone metric $g_0$.
\begin{proposition}[{cf.~\cite[Prop.~4.26]{Kotani_26_PTM}}]\label{prop:cone met on central HK quot}
  Consider Setup~\ref{setup:HK quot}. Then, the central quotient $X_0$ is a conical symplectic variety, and the regular locus $(X_0)_{\mathrm{reg}}$ admits a natural hyperkähler cone metric $g_0$ induced by the hyperkähler moment map.
\end{proposition}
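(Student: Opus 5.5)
I would prove the two assertions of the proposition, that $X_0$ is conical symplectic and that $(X_0)_{\mathrm{reg}}$ carries a hyperkähler cone metric, separately, working throughout with the Kempf--Ness identification $X_0\simeq \mu_\mathbb{C}^{-1}(0)/\!/G_\mathbb{C}$ from Theorem~\ref{thm:King} with $\beta=0$, $\alpha=0$. Both parts rest on the scaling actions on $M=\mathbb{H}^n$.

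\emph{The conical symplectic structure.} The scalar $\mathbb{C}^*$-action $t\cdot x = tx$ on $M$, taken with respect to $I$, commutes with the linear $G_\mathbb{C}$-action. Since $\mu_\mathbb{C}$ is quadratic, $\mu_\mathbb{C}(tx)=t^2\mu_\mathbb{C}(x)$, so the action preserves $\mu_\mathbb{C}^{-1}(0)$. It therefore descends to $X_0=\operatorname{Spec}\mathbb{C}[\mu_\mathbb{C}^{-1}(0)]^{G_\mathbb{C}}$.
\begin{enumerate}
  \item The invariant ring is a graded subring of a quotient of the polynomial ring. It is non-negatively graded with degree-zero part $\mathbb{C}$, so the grading is positive.
  \item The holomorphic symplectic form $\omega_\mathbb{C}=\omega_J+i\omega_K$ on $M$ has weight $2$. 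Its reduction to $(X_0)_{\mathrm{reg}}$ inherits weight $2$.
  \item $X_0$ is affine, and its regular locus is holomorphic symplectic by the standard reduction argument (Remark~\ref{rem:to King}).
\end{enumerate}
Normality of $X_0$ follows because it is a GIT quotient of the normal variety $\mu_\mathbb{C}^{-1}(0)$. That normality in turn uses Assumption~\ref{assump:standing_assumptions}~(1): generic freeness makes $\mu_\mathbb{C}^{-1}(0)$ a complete intersection that is regular in codimension one. If this becomes delicate I would instead cite \cite{Kotani_26_PTM}.

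\emph{The cone metric.} The hyperkähler quotient metric $g_0$ on $(\mu^{-1}(0)/G)_{\mathrm{reg}}$ is defined via Remark~\ref{rem:natural metric on HK quot}. Under Theorem~\ref{thm:King}, $(\mu^{-1}(0)/G)_{\mathrm{reg}}$ is identified with $(X_0)_{\mathrm{reg}}$. Since $\mu$ is homogeneous quadratic, $\mu^{-1}(0)$ is invariant under the real scaling $x\mapsto rx$, $r>0$, which is the restriction of $\lambda_r$. On $M$ we have $\lambda_r^*g_{\mathrm{std}}=r^2g_{\mathrm{std}}$. Pulling this back through the relation $i^*g_{\mathrm{std}}=q^*g_0$ and using surjectivity of $q$ gives $\lambda_r^*g_0=r^2g_0$, so $g_0$ is a cone metric with $k=2$.

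\emph{The main obstacle.} The delicate point is matching the two regular loci: one must check that the regular locus of the variety $X_0$ coincides with the locus where the metric quotient is smooth. The approach I would try first is to stratify by stabilizer type. Points with closed orbits and trivial stabilizer give smooth points of both spaces, with the Kempf--Ness map a diffeomorphism there. On the remaining strata, a local slice argument shows that $X_0$ is singular, so both regular loci equal the principal stratum.

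Finally, algebraicity of $g_0$ in the sense of Remark~\ref{remark: twistor spaces} follows because its twistor space is the quotient of the twistor space of $\mathbb{H}^n$, which is algebraic, by the fibrewise complex moment map $\mu_\mathbb{C}+2u\mu_I-u^2\bar\mu_\mathbb{C}$. This is glued from two copies of the affine family $\mathcal{X}'$ of Lemma~\ref{lemma:to Xcal'} restricted along a line.
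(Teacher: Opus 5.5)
Your core argument is the paper's. You get affineness of $X_0$ from King's identification $X_0\simeq\mu_\mathbb{C}^{-1}(0)/\!/G_\mathbb{C}$ (Theorem~\ref{thm:King}). You get a good $\mathbb{C}^*$-action because the $I$-scalar action commutes with the linear $G_\mathbb{C}$-action and $\mu_\mathbb{C}$ is homogeneous of degree $2$, so $\omega_\mathbb{C}$ descends with weight $2$. You get $\lambda_r^*g_0=r^2g_0$ from $\lambda_r^*g_{\mathrm{std}}=r^2g_{\mathrm{std}}$ through Remark~\ref{rem:natural metric on HK quot}. That part is fine, with one correction to the last step: what you actually use is that $q$ is a submersion commuting with scaling, and that the quotient relation holds on horizontal vectors. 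Surjectivity of $q$ alone does not make $q^*$ injective on tensors.

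The step that does not hold up is your added normality argument. Assumption~\ref{assump:standing_assumptions}~(1) is not a freeness statement; freeness is (2), and it concerns $M^{\beta-s}$. Assumption (1) only provides a point of $\mu_\mathbb{C}^{-1}(0)$ with closed orbit and finite stabilizer. At points with finite stabilizer $d\mu_\mathbb{C}$ is surjective, so you learn only that $\mu_\mathbb{C}^{-1}(0)$ is smooth of the expected dimension $\dim_\mathbb{C}M-\dim G$ on a non-empty open subset. That does not bound the dimension of the other irreducible components of $\mu_\mathbb{C}^{-1}(0)$, which your complete-intersection claim needs. Nor does it show that the locus of points with positive-dimensional stabilizer (which contains the singular locus) has codimension at least two, which $R_1$ needs. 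In the quiver setting, even the complete-intersection and irreducibility statements are the substance of Crawley-Boevey's theorem \cite{CrawleyBoevey01}, not a formal consequence of having one stable point.

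The paper proves neither normality nor the identification of $(X_0)_{\mathrm{reg}}$ with the smooth stratum of the metric quotient, which you rightly single out as the main obstacle; it leans on \cite[Prop.~4.26]{Kotani_26_PTM}. So either take your stated fallback and cite that result, or give real arguments for both points. For the regular loci, \emph{a local slice argument} still has to explain why $X_0$ is singular along lower strata. One way: the reduced Poisson bivector vanishes at the vertex of the transverse model $\mu_W^{-1}(0)/\!/H_\mathbb{C}$, since its invariants have degree $\ge 2$ and the bracket has degree $-2$. A Hartogs argument then rules out a nondegenerate symplectic form at a smooth point there.
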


  \begin{proof}
  By Theorem~\ref{thm:King}, the central quotient $X_0$ is $I$-biholomorphic to the affine quotient $\mu_\mathbb{C}^{-1}(0)/\!/G_\mathbb{C}$, and thus it is an affine variety.
  By Setup~\ref{setup:HK quot}, for the natural $I$-holomorphic $\mathbb{C}^*$-action on $M$, $\lambda^*\mu_\mathbb{C} = \lambda^2\mu_\mathbb{C}$ holds.
  Moreover, since the $G_\mathbb{C}$-action on $M$ is linear, it commutes with the natural $\mathbb{C}^*$-action.
  Therefore, the central quotient $X_0\simeq\mu_\mathbb{C}^{-1}(0)/\!/G_\mathbb{C}$ admits a natural good $\mathbb{C}^*$-action.
  Since the standard metric $g_{\mathrm{std}}$ on $M$ naturally scales with weight $2$ under the $\mathbb{R}^+$-action, Remark~\ref{rem:natural metric on HK quot} directly implies that the induced hyperkähler metric $g_0$ on $(X_0)_{\mathrm{reg}}$ inherits this property. Thus, $g_0$ is a cone metric.
\end{proof}

The following result of Nakajima \cite{Nakajima94} provides an analytic generalization of Proposition~\ref{prop:crepant resol and HK quot}, extending the construction of crepant resolutions to general hyperkähler quotients.

\begin{proposition}[{Nakajima \cite[Theorem~4.1]{Nakajima94}}]\label{prop:crepant resol and HK quot (general case)}
  Consider Setup~\ref{setup:HK quot} and Assumption~\ref{assump:standing_assumptions}.
  Let $\mu=(\mu_I,\mu_\mathbb{C}) \colon M\to\mathfrak{g}^*\oplus\mathfrak{g}^*_\mathbb{C}$, and assume that the stable locus satisfies $\mu_\mathbb{C}^{-1}(0)^s \neq \emptyset$. Fix an arbitrary point $\zeta_I \in \mathfrak{z}^*$ and a point $\alpha \in\mathfrak{z}^*_\mathbb{C}$, and let $\zeta=(\zeta_I,\alpha) \in\mathfrak{z}^*\oplus\mathfrak{z}^*_\mathbb{C}$ and $\zeta_0 \coloneqq (0,\alpha)$. Assume that $X_\zeta$ is non-singular.
  Then, the map $\pi_I \colon X_\zeta \to X_{\zeta_0}$ induced by the inclusion map $\mu^{-1}(\zeta) \subset \mu^{-1}_\mathbb{C}(\alpha)=\mu^{-1}_\mathbb{C}(\alpha)^{ss}$ is an $I$-holomorphic (not necessarily projective) crepant resolution.
\end{proposition}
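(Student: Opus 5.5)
The plan follows the proof of Proposition~\ref{prop:crepant resol and HK quot}. The new difficulty is that $\zeta_I$ is an arbitrary real point, not a lattice point, so no GIT description of $X_\zeta$ is available; this is why projectivity is dropped.

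\textbf{Step 1: construct $\pi_I$.} First I will show that every point of $\mu^{-1}(\zeta)$ lies in $\mu^{-1}_\mathbb{C}(\alpha)$. Since $\beta=0$ gives $\mu_\mathbb{C}^{-1}(\alpha)^{ss}=\mu_\mathbb{C}^{-1}(\alpha)$, the composite $\mu^{-1}(\zeta)\hookrightarrow \mu_\mathbb{C}^{-1}(\alpha)\to \mu_\mathbb{C}^{-1}(\alpha)/\!/G_\mathbb{C}\simeq X_{\zeta_0}$ is well defined. Here the last isomorphism is Theorem~\ref{thm:King} with $\beta=0$. The composite is $G$-invariant, so it descends to a continuous map $\pi_I\colon X_\zeta\to X_{\zeta_0}$.

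\textbf{Step 2: $\pi_I$ is $I$-holomorphic, proper and surjective.} Holomorphicity holds because the composite is $G$-invariant and holomorphic on $\mu^{-1}(\zeta)$ in the $I$-directions. Concretely, $G_\mathbb{C}$-invariant polynomials on $\mu_\mathbb{C}^{-1}(\alpha)$ restrict to functions whose differentials vanish on the $\mathfrak{g}$-orbit directions. The complex structure of $X_\zeta$ is the one on the horizontal part of $T\mu^{-1}(\zeta)$, which is identified with the tangent space of $\mu_\mathbb{C}^{-1}(\alpha)$ modulo $\mathfrak{g}_\mathbb{C}$.

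For properness, I will use that the invariant polynomials include $|x|^2$-type bounds via the grading. A point $x\in\mu^{-1}(\zeta)$ has $|x|^2$ controlled by invariant data together with $\zeta_I$, using the Kempf--Ness estimate: the unique closed $G_\mathbb{C}$-orbit in $\overline{G_\mathbb{C}x}$ meets $\mu_I^{-1}(0)$, and the norm function is convex along $\exp(i\mathfrak{g})$. This is where I expect the main analytic work. I would first try a gradient-flow or convexity argument, following Nakajima's proof, in which the Kähler potential $|x|^2$ restricted to the orbit $\exp(i\mathfrak{g})x$ is a convex function whose critical points are moment-map level sets.

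Surjectivity then follows. Each closed orbit in $\mu_\mathbb{C}^{-1}(\alpha)$ lies in the closure of some orbit meeting $\mu_I^{-1}(\zeta_I)$, obtained by minimizing $|x|^2-2\langle\zeta_I,\cdot\rangle$ along the orbit. Alternatively, properness together with density of the image gives surjectivity.

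\textbf{Step 3: birationality.} By Assumption~\ref{assump:standing_assumptions}~(1) and Lemma~\ref{lemma:V_alpha is not empty}, $V^s\neq\emptyset$ for $V=\mu^{-1}_\mathbb{C}(\alpha)$. Over the open dense set $V^s/G_\mathbb{C}\subset X_{\zeta_0}$, every $G_\mathbb{C}$-orbit is closed with finite stabilizer. Each such orbit meets $\mu_I^{-1}(\zeta_I)$ in exactly one $G$-orbit, by the convexity argument of Step 2 applied to a proper strictly convex function on $G_\mathbb{C}/G$. Hence $\pi_I$ is bijective over this locus and is a biholomorphism there, so $\pi_I$ is bimeromorphic.

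\textbf{Step 4: crepancy.} $\pi_I$ pulls back the $I$-holomorphic symplectic form $\omega_J+i\omega_K$ of $X_{\zeta_0,\mathrm{reg}}$ to that of $X_\zeta$, since both are induced from $M$. Thus $\pi_I^*$ of a generator of the canonical sheaf is a nowhere-vanishing top form, and $K_{X_\zeta}=\pi_I^*K_{X_{\zeta_0}}$. Combined with Steps 2--3 and the assumed smoothness of $X_\zeta$, $\pi_I$ is an $I$-holomorphic crepant resolution.

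The main obstacle is Step 2, namely properness and the one-orbit statement for non-rational $\zeta_I$. If the convexity argument resists, I will fall back on citing \cite[Theorem~4.1]{Nakajima94} for these analytic facts.
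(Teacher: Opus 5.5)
Your proposal is correct and matches the paper's proof. The paper likewise notes $X_{\zeta_0}\neq\emptyset$ (Lemma~\ref{lemma:to Xcal'}), takes properness and birationality of $\pi_I$ directly from the argument of \cite[Theorem~4.1]{Nakajima94} (your stated fallback for Steps~2--3), and gets crepancy from the fact that $\pi_I$ preserves the $I$-holomorphic symplectic form, exactly as in your Step~4.

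If you want to prove properness rather than cite it, a scaling argument works better than convexity, whose linear term $\langle\zeta_I,\cdot\rangle$ need not stay bounded along degenerating one-parameter subgroups. Suppose $x_n\in\mu^{-1}(\zeta)$ have bounded $G_\mathbb{C}$-invariants but $|x_n|\to\infty$. By homogeneity, $x_n/|x_n|$ subconverges to a unit vector in $\mu_I^{-1}(0)$ on which all positive-degree invariants vanish. This contradicts Kempf--Ness, since the only closed orbit in the nullcone is $\{0\}$.
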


\begin{proof}
  Note that the hyperkähler quotient $X_{\zeta_0}$ is non-empty by Lemma~\ref{lemma:to Xcal'}.
  The properness and birationality of $\pi_I$ follow directly from the argument in \cite[Theorem~4.1]{Nakajima94}. Since $\pi_I$ preserves the holomorphic symplectic form, it gives a crepant resolution as in Proposition~\ref{prop:crepant resol and HK quot}.
\end{proof}

\begin{remark}\label{remark:to crepant resol prop}
  \begin{enumerate}
    \item When $X_{\zeta_0}$ is non-singular, an $I$-biholomorphic equivalence $X_\zeta \simeq X_{\zeta_0}$ is obtained.
  \item By hyperkähler rotation, we similarly obtain $J$- and $K$-holomorphic crepant resolutions $\pi_J$ and $\pi_K$.
    \item When $\zeta_I$ is a rational point, that is, $\zeta_I\in \mathfrak{z}^*_\mathbb{Q} \coloneqq \mathfrak{z}^*_\mathbb{Z} \otimes\mathbb{Q}$,
    it follows that $\pi_I$ is a projective crepant resolution using Proposition~\ref{prop:crepant resol and HK quot} and the $\mathbb{R}^+$-action on $M$.
  \end{enumerate}
\end{remark}

\subsubsection{Wall-Chamber Structure and Diffeomorphisms}

Using these general crepant resolutions, we can observe a natural \textit{wall-chamber structure} on the parameter space. Specifically, the discriminant locus $D'_\mathbb{R}$ determines the wall, and for any parameter $\zeta$ outside this wall, the corresponding hyperkähler quotient $X_\zeta$ is non-singular.

\begin{proposition}\label{prop:smooth ness of Omega'}
  Consider Setup~\ref{setup:HK quot} and Assumption~\ref{assump:standing_assumptions}.
  Let $D'\subset\mathfrak{z}^*_\mathbb{C}$ be the discriminant locus (cf.~Remark~\ref{rem:discriminant locus on z^*_C}), and let $D'_\mathbb{R}\subset\mathfrak{z}^*$ be its real part. Let $\Omega'$ be the complement of the wall $D'_\mathbb{R}\otimes\mathbb{R}^3$ in $\mathfrak{z}^*\otimes\mathbb{R}^3$. Then, for any point $\zeta\in \Omega'$, $X_\zeta$ is non-singular.
\end{proposition}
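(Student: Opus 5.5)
The plan is to reduce to the case where the complex part of the parameter avoids the complex discriminant locus $D'$. Hyperkähler rotation lets us arrange this, and Nakajima's crepant resolution (Proposition~\ref{prop:crepant resol and HK quot (general case)}) then transfers smoothness from the affine fiber to $X_\zeta$.

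First I will describe $D'$ concretely. By Lemma~\ref{lemma:to Xcal'} and Lemma~\ref{lemma:all Kirwan surj}, $\mathcal{X}'\to\mathfrak{z}^*_\mathbb{C}$ is the affinization of the period-surjective Poisson deformation $\mathcal{Y}^\beta$. Its period map is the linear surjection $\kappa_\beta$, so by universality $\mathcal{Y}^\beta$ is the pullback of the universal family along $\kappa_\beta$. By Proposition~\ref{prop:meaning of wall} the discriminant locus is therefore
\[
D'=\kappa_\beta^{-1}(D)=\bigcup_\alpha \kappa_\beta^{-1}(H_\alpha).
\]
Each $H_\alpha$ is cut out by pairing with a real cycle $\Sigma_\alpha$, and $\kappa_\beta$ is the complexification of the real map $\kappa_{\beta,\mathbb{Z}}\otimes\mathbb{R}$. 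Hence each $\kappa_\beta^{-1}(H_\alpha)$ is either all of $\mathfrak{z}^*_\mathbb{C}$ or the complexification $H'_\alpha\otimes\mathbb{C}$ of a real hyperplane $H'_\alpha\subset\mathfrak{z}^*$. The first case is excluded because a generic fiber is smooth. There are only finitely many distinct hyperplanes here, since the relevant $[\Sigma_\alpha]$ can be taken to be finitely many generators, or alternatively by finiteness of the discriminant as an algebraic subset. So $D'=\bigcup_k H'_k\otimes\mathbb{C}$ with $D'_\mathbb{R}=\bigcup_k H'_k$ and $D'_\mathbb{R}\otimes\mathbb{R}^3=\bigcup_k H'_k\otimes\mathbb{R}^3$. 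Pinning this down carefully is the step I expect to be the main obstacle. The rest is elementary.

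Next comes the rotation argument. Write $H'_k=\ker \ell_k$ for real linear forms $\ell_k$ on $\mathfrak{z}^*$, and for $\zeta=(\zeta_I,\zeta_J,\zeta_K)$ set
\[
v_k\coloneqq(\ell_k(\zeta_I),\ell_k(\zeta_J),\ell_k(\zeta_K))\in\mathbb{R}^3.
\]
The condition $\zeta\in\Omega'$ says exactly that $v_k\neq0$ for every $k$. The group $SO(3)$ acts on $\mathfrak{z}^*\otimes\mathbb{R}^3$, and via the $Sp(1)$-action rotating $(I,J,K)$ it acts on the triple of complex structures. Since $\mu$ is $Sp(1)$-equivariant, a rotation $R$ gives a hyperkähler isometry between $X_\zeta$ and $X_{R\zeta}$, the latter taken with the rotated complex structures. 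This preserves smoothness, and it replaces $v_k$ by $Rv_k$. For each $k$, the set of $R$ with $Rv_k\in\mathbb{R}e_I$ is a proper closed subset of $SO(3)$. With finitely many $k$, a generic $R$ satisfies $(Rv_k)_J+i(Rv_k)_K\neq0$ for all $k$. Replacing $\zeta$ by $R\zeta$, the complex part $\alpha=\zeta_J+i\zeta_K$ then satisfies $\ell_k^\mathbb{C}(\alpha)\neq0$ for all $k$, that is, $\alpha\notin D'$.

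Finally, smoothness follows. By Theorem~\ref{thm:King} with $\beta=0$, we have $X_{(0,\alpha)}\simeq\mathcal{X}'_\alpha$, which is non-singular because $\alpha\notin D'$. By Proposition~\ref{prop:crepant resol and HK quot (general case)}, the map $\pi_I\colon X_{(\zeta_I,\alpha)}\to X_{(0,\alpha)}$ is a proper birational crepant morphism onto this smooth target. Since the target is already smooth and $\pi_I$ is symplectic, Remark~\ref{remark:to crepant resol prop}(1) shows it is an isomorphism. One point needs care: Proposition~\ref{prop:crepant resol and HK quot (general case)} presupposes smoothness of the source. I would instead run Nakajima's argument directly. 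For $\alpha\notin D'$, every point of $\mu_\mathbb{C}^{-1}(\alpha)$ has closed orbit and trivial stabilizer (smoothness of the affine quotient at a point with free action). Hence $\mu^{-1}(\zeta_I,\alpha)\subset\mu_\mathbb{C}^{-1}(\alpha)$ consists of points with free $G$-action, so $X_{(\zeta_I,\alpha)}$ is a smooth quotient that maps bijectively onto $X_{(0,\alpha)}$. Thus $X_{R\zeta}$ is non-singular, and therefore so is $X_\zeta$.
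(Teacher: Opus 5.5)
Your proposal is correct in outline, but it is organized differently from the paper's proof.

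\textbf{The two routes.} The paper rotates so that the real component $\zeta_I$ avoids $D'_\mathbb{R}$. It gets smoothness of $X_{(\zeta_I,0,0)}$ from Lemma~\ref{lemma:discriminant locus and HK moment map}, which only uses that the real point $\zeta_I\in\mathfrak{z}^*\subset\mathfrak{z}^*_\mathbb{C}$ lies off $D'$. It then climbs back through two comparison maps $X_\zeta\xrightarrow{\pi_K}X_{(\zeta_I,\zeta_J,0)}\xrightarrow{\pi_J}X_{(\zeta_I,0,0)}$, each a crepant resolution onto an already smooth target. You instead rotate so that the complex parameter $\alpha=\zeta_J+i\zeta_K$ avoids $D'$, and use the single map $\pi_I$ onto $X_{(0,\alpha)}\simeq\mathcal{X}'_\alpha$.

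\textbf{What each buys.} The paper's chain only ever tests $D'$ at real points, so at this stage it never needs to know $D'$ off $\mathfrak{z}^*$. It only tacitly needs $D'_\mathbb{R}$ to be a union of hyperplanes so that $\Omega'$ and the rotation make sense. Your one-step route needs the full description $D'=\bigcup_k H'_k\otimes\mathbb{C}$. You derive this correctly from $D'=\kappa_\beta^{-1}(D)$ and the reality of the cycles $\Sigma_\alpha$; this is the identity the paper only invokes later, in \eqref{eq:rel of D' and D}. In exchange you use one rotation and one comparison map, and you make explicit the hyperplane structure both arguments rely on. For the genericity of $R$, countably many hyperplanes already suffice by Baire, so the finiteness issue is harmless. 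However, ``finitely many generators'' is not the right reason, since $D$ is a union over all curves in $Y_0$, not over a chosen generating set.

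\textbf{The final step.} You are right that Proposition~\ref{prop:crepant resol and HK quot (general case)} presupposes smoothness of the source. The paper's proof invokes it for $\pi_J$ and $\pi_K$ without addressing this. Your replacement claim is that for $\alpha\notin D'$ every point of $\mu_\mathbb{C}^{-1}(\alpha)$ has closed orbit and trivial stabilizer. This is true under the standing assumptions, and it makes the conclusion immediate for arbitrary real $\zeta_I$: a free action on $\mu^{-1}(\zeta_I,\alpha)$ gives a regular level and a smooth quotient.

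But the parenthetical reason you give is the converse implication. Smoothness of an affine quotient does not by itself force closed orbits or a free action. For example, for $\mathbb{C}^*$ acting on $T^*\mathbb{C}$ with weights $\pm1$, the quotient $\mu_\mathbb{C}^{-1}(0)/\!/\mathbb{C}^*$ is a point.

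What you need is a slice argument:
\begin{enumerate}
\item At a closed orbit with stabilizer $H\neq 1$, $\mathcal{X}'_\alpha$ is locally $N^H\times\bigl(\mu_{N'}^{-1}(0)/\!/H\bigr)$ as a Poisson variety, with $N'^H=0$.
\item The second factor is conical with generators of degree $\geq 2$, so its Poisson bivector vanishes at the vertex.
\item Since $\mathcal{X}'_\alpha\simeq\mathcal{Y}^\beta_\alpha$ is symplectic, that factor must be a point.
\item This is impossible. Stable points have trivial stabilizers by Assumption~\ref{assump:standing_assumptions}~(2) and Remark~\ref{rem:to stable locus}, and they are dense in $\mathcal{X}'_\alpha$ (Lemma~\ref{lemma:V_alpha is not empty}).
\item Hence every closed orbit is free. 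A non-closed orbit would have a lower-dimensional closed orbit, with positive-dimensional stabilizer, in its closure, so every orbit is closed.
\end{enumerate}
With this filled in, your conclusion follows.
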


\begin{proof}
  Under an appropriate hyperkähler rotation, we may assume that $\zeta_I\notin D'_\mathbb{R}$.
  By Lemma~\ref{lemma:discriminant locus and HK moment map}, $X_{(\zeta_I,0,0)}$ is non-singular.
  By Proposition~\ref{prop:crepant resol and HK quot (general case)} and Remark~\ref{remark:to crepant resol prop}, we obtain the following smooth map:
  \begin{equation}
    X_{\zeta} \overset{\pi_K}{\longrightarrow} X_{(\zeta_I,\zeta_J,0)} \overset{\pi_J}{\longrightarrow} X_{(\zeta_I,0,0)}.
  \end{equation}
  Here, $\pi_J$ and $\pi_K$ provide $J$- and $K$-biholomorphic equivalences, respectively.
  Therefore, $X_\zeta$ is non-singular.
\end{proof}

We are now ready to construct the diffeomorphism to the reference space $Y_{\beta_0}$ and evaluate the asymptotic behavior of the metric.
\begin{proposition}\label{prop:preserveness of asymp HK met}
  Consider Setup~\ref{setup:HK quot} and Assumption~\ref{assump:standing_assumptions}.
  Fix a projective crepant resolution $Y_{\beta_0}$ ($\beta_0\in \mathfrak{z}^*_\mathbb{Z}$) of the central quotient $X_0$. Then, for any non-singular hyperkähler quotient $X_\zeta$ ($\zeta\in \Omega'$), a diffeomorphism $\Phi_\zeta \colon X_\zeta\to Y_{\beta_0}$ is determined, and the push-forward metric $(\Phi_\zeta)_*g_\zeta$ of the metric $g_\zeta$ is an asymptotic hyperkähler metric on $Y_{\beta_0}$.
\end{proposition}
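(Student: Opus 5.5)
We construct $\Phi_\zeta$ as a composition of the holomorphic maps already available, and then compare metrics at infinity using the $\mathbb{R}^+$-scaling \eqref{eq:R^+-action}. For $\zeta=(\zeta_I,\zeta_J,\zeta_K)\in\Omega'$, after a hyperkähler rotation we may assume $\zeta_I\notin D'_\mathbb{R}$. The proof of Proposition~\ref{prop:smooth ness of Omega'} then gives diffeomorphisms
\[
X_\zeta\xrightarrow{\pi_K}X_{(\zeta_I,\zeta_J,0)}\xrightarrow{\pi_J}X_{(\zeta_I,0,0)}.
\]
We next join $\zeta_I$ to $\beta_0$ by a path $\zeta_I(t)$ in the complexified chamber, i.e. $(\zeta_I(t),\alpha(t))$ avoiding $D'$, which is possible since $D'$ has real codimension two in $\mathfrak{z}^*_\mathbb{C}$. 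Along this path the family $\{X_{(\zeta_I(t),\alpha(t))}\}$ is a smooth fibre bundle. Indeed, by Theorem~\ref{thm:King} and the $\mathbb{C}^*$-equivariance, the fibres are non-singular and the family is locally trivial by an Ehresmann-type argument using a proper $\mathbb{C}^*$-equivariant map to $\mathcal{X}'$. This yields a diffeomorphism $X_{(\zeta_I,0,0)}\simeq Y_{\beta_0}$. Alternatively, when $\zeta_I$ and $\beta_0$ lie in the same chamber of $\mathfrak{z}^*\setminus D'_\mathbb{R}$, one can take the straight segment. We define $\Phi_\zeta$ as the composite.

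The key property to arrange is that every map in this composite is the identity on the common open dense set $\mu_\mathbb{C}^{-1}(\cdot)^s/G_\mathbb{C}$ away from a compact set, modulo the identification with $(X_0)_{\mathrm{reg}}$. Concretely, each $\pi_A$ is compatible with the affinization $\pi\colon X_\zeta\to X_{\zeta_0}$. Outside a compact set, the point of $X_\zeta$ lies over a point of $X_{0}$ at large radius, and the Kempf--Ness identifications all come from inclusions of level sets $\mu^{-1}(\zeta)\subset M$. We therefore identify $X_\zeta$ near infinity with $(X_0)_{\mathrm{reg}}$ by sending the $G$-orbit in $\mu^{-1}(\zeta)$ to the nearest $G$-orbit in $\mu^{-1}(0)$ via the gradient flow or normal projection in $M$. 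We choose the Ehresmann trivialization to agree with this identification outside a compact set.

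For the asymptotics, take $x_0\in(X_0)_{\mathrm{reg}}$ and use \eqref{eq:R^+-action}:
\[
\|g_\zeta-g_0\|_{g_0(r\cdot x_0)}=\|g_{r^{-2}\zeta}-g_0\|_{g_0(x_0)}.
\]
By Remark~\ref{rem:natural metric on HK quot}, $g_{r^{-2}\zeta}$ is the quotient of $g_{\mathrm{std}}$ restricted to $\mu^{-1}(r^{-2}\zeta)$. Near the free orbit over $x_0$, the level sets $\mu^{-1}(\zeta')$ form a smooth family in $\zeta'$ because $d\mu$ is surjective at stable points with trivial stabilizer. Hence $g_{\zeta'}\to g_0$ smoothly on a neighbourhood of $x_0$ as $\zeta'\to0$, and the right-hand side tends to $0$.

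The main obstacle is the compatibility step: ensuring that the global diffeomorphism $\Phi_\zeta$, which is built from holomorphic maps in rotating complex structures together with a path in parameter space, coincides near infinity with this local level-set identification, so that the pulled-back comparison is legitimate. I would handle this by constructing $\Phi_\zeta$ directly as the flow of a $G$-invariant vector field on $\bigcup_{s\in[0,1]}\mu^{-1}(\gamma(s))$, along a path $\gamma$ in $\Omega'$ from $\zeta$ to $(\beta_0,0,0)$. This vector field is the horizontal lift orthogonal to $G$-orbits, cut off to agree with the scaling-compatible lift near infinity. By homogeneity it has size $O(|x|^{-1})$, so the induced map differs from the identity by $o(1)$ relative to $g_0$.
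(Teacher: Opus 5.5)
\textbf{Verdict: there is a genuine gap, in the construction of $\Phi_\zeta$ as a global diffeomorphism; your asymptotic step is essentially the paper's.}

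\textbf{What works.} Scaling by $r^*g_\zeta=r^2g_{r^{-2}\zeta}$ turns the claim along a ray $r\cdot x_0$, $x_0\in(X_0)_{\mathrm{reg}}$, into smooth convergence $g_{\zeta'}\to g_0$ near a free orbit as $\zeta'\to0$. That step is fine, under three provisos. First, $\Phi_\zeta$ must intertwine the scalings. Second, $\Phi_\zeta$ must tend to the identity near regular points. Third, you must compare with $g_0$ through the fixed embedding $\pi_I\colon(X_0)_{\mathrm{reg}}\hookrightarrow Y_{\beta_0}$, not through a nearest-orbit identification of your own. The two identifications agree asymptotically along regular rays, but this has to be checked.

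\textbf{The gap.} Neither of your two constructions yields a global diffeomorphism.

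\emph{Ehresmann via $\mathcal{X}'$.} The argument ``via a proper $\mathbb{C}^*$-equivariant map to $\mathcal{X}'$'' does not cover a change of the real parameter $\zeta_I$. Slodowy-type triviality works for $\mathcal{Y}^\beta\to\mathfrak{z}^*_\mathbb{C}$ because the action contracts the base to a point whose fibre $Y_\beta$ is smooth. Here the $\mathbb{R}^+$-action rescales $\zeta_I$ itself and contracts to the singular $X_0$.

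\emph{Flow construction.} This relies on the horizontal lift being $O(|x|^{-1})$ ``by homogeneity'', and on the maps being the identity on the stable locus ``away from a compact set''. Both tacitly assume that the end of $Y_{\beta_0}$ lies over $(X_0)_{\mathrm{reg}}$, i.e.\ that $X_0$ has an isolated singularity, as in the ALE case. For toric hyperk\"ahler and quiver varieties the singular locus of $X_0$ is typically a non-compact cone. Near non-free orbits of $\mu^{-1}(0)$ the lifting problem $d\mu(V)=\dot\zeta$ degenerates, so rescaling gives no uniform decay.

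\emph{Counterexample.} Take the product of two Eguchi--Hanson spaces, which is a coloop-free unimodular toric example. Changing the first parameter moves points $(p,q)$ with $p$ in a fixed compact set by an amount that does not decay as $q\to\infty$.

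So neither completeness of your flow on the non-compact fibres nor the ``identity outside a compact set'' property follows from what you wrote. You would need a real global estimate, for instance boundedness of the horizontal lift via slice models at non-free orbits, and you have not supplied one.

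\textbf{How the paper avoids this.} It does no analysis on the end.
\begin{itemize}
\item It uses $F_{\zeta_I}=\pi_J\circ\pi_K$, as you do.
\item It realises the change $\zeta_I\to\beta_0$ through a hyperk\"ahler rotation. $\Phi_{\zeta_I,\beta_0}$ is defined by the square formed by $\pi_J\colon X_{(\zeta_I,\beta_0,0)}\to X_{(\zeta_I,0,0)}$, the rotation $X_{(\zeta_I,\beta_0,0)}\to X_{(\beta_0,\zeta_I,0)}$, and $\pi_J\colon X_{(\beta_0,\zeta_I,0)}\to X_{(\beta_0,0,0)}$.
\item Both maps $\pi_J$ are $J$-holomorphic crepant resolutions of smooth targets, hence biholomorphisms. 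So $\Phi_\zeta$ is a global diffeomorphism for free.
\item All maps are induced from level-set inclusions, so they commute with the $\mathbb{R}^+$-action. Moreover $F_0=\mathrm{id}$ and $\Phi_{0,0}=\mathrm{id}$.
\end{itemize}
Since the asymptotic condition only tests rays through regular points, your local convergence argument then completes the proof.
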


\begin{proof}
  Fix a projective crepant resolution $Y_{\beta_0}$ ($\beta_0\in \mathfrak{z}^*_\mathbb{Z}$) of the central quotient $X_0$.
  For any $\zeta=(\zeta_I,\zeta_J,\zeta_K)\in \Omega'$, by applying a hyperkähler rotation, we may assume that $\zeta_I \notin D'_\mathbb{R}$.
  We show the following two claims in order:
  \begin{enumerate}[wide=\parindent, itemsep=\medskipamount]
    \item Construction of the diffeomorphism $\Phi_\zeta \colon X_\zeta \to Y_{\beta_0}$:\quad
    Since $X_{(\zeta_I,0,0)}$ is non-singular, similarly to the proof of Proposition~\ref{prop:smooth ness of Omega'}, we obtain the following diffeomorphism:
  \begin{equation}
    F_{\zeta_I} \colon X_{\zeta} \overset{\pi_K}{\longrightarrow} X_{(\zeta_I,\zeta_J,0)} \overset{\pi_J}{\longrightarrow} X_{(\zeta_I,0,0)}.\label{eq:F_zeta_I}
  \end{equation}
  Also, we define a diffeomorphism $\Phi_{\zeta_I,\beta_0}$ between $X_{(\zeta_I,0,0)}$ and $X_{(\beta_0,0,0)}\simeq Y_{\beta_0}$ such that the following diagram commutes:
  \begin{equation}
  \vcenter{
  \xymatrix{
    X_{(\zeta_I,\beta_0,0)}\ar[d]_{\text{HK rot.}} \ar[r]^{\pi_J} & X_{(\zeta_I,0,0)}\ar[d]^{\Phi_{\zeta_I,\beta_0}} \\
    X_{(\beta_0,\zeta_I,0)}\ar[r]_{\pi_J} & X_{(\beta_0,0,0)}
    }}.\label{eq:Phi_{beta,beta'}}
  \end{equation}
  Here, the map $\text{HK rot.} \colon X_{(\zeta_I,\beta_0,0)}\to X_{(\beta_0,\zeta_I,0)}$ represents a hyperkähler rotation.
  Then, we define the diffeomorphism $\Phi_\zeta \colon X_\zeta \to Y_{\beta_0}$ as follows:
  \begin{equation}
    \Phi_\zeta \colon X_{\zeta} \overset{F_{\zeta_I}}{\longrightarrow}X_{(\zeta_I,0,0)} \overset{\Phi_{\zeta_I,\beta_0}}{\longrightarrow} X_{(\beta_0,0,0)}\simeq Y_{\beta_0}.
  \end{equation}

    \item Proof of the asymptotic behavior of the push-forward metric $(\Phi_\zeta)_*g_\zeta$:\quad
  Let $g_\zeta$ be the natural hyperkähler metric on $X_\zeta$.
  Since the moment map $\mu$ satisfies $r^*\mu = r^2 \mu$ with respect to the natural $\mathbb{R}^+$-action on $M$, the following equality holds by Remark~\ref{rem:natural metric on HK quot}:
  \begin{equation}
    r^*g_\zeta = r^2 g_{r^{-2}\zeta}.
  \end{equation}
  Here, we are considering the real scalar multiplication $r \colon X_{r^{-2}\zeta}\to X_{\zeta}$ induced by the scalar multiplication $r \colon \mu^{-1}(r^{-2}\zeta) \to \mu^{-1}(\zeta)$ on $M$. Furthermore, since the natural maps $\pi_J$, $\pi_K$, and hyperkähler rotations are compatible with this real scalar multiplication, we see that their compositions $F_{\zeta_I}$ and $\Phi_{\zeta_I,\beta_0}$ satisfy the following compatibilities:
  \begin{align}
    r^*\circ F_{\zeta_I} &= F_{r^{-2}\zeta_I} \circ r^*,\\
    r^*\circ \Phi_{\zeta_I,\beta_0} &= \Phi_{r^{-2}\zeta_I,r^{-2}\beta_0} \circ r^*.
  \end{align}
  From the above three equalities and the fact that $g_0$ is a cone metric, the following equality holds: for any $r>0$ and $x\in (X_0)_{\mathrm{reg}}\subset Y_{\beta_0}$,
  \begin{align}
    \|(\Phi_\zeta)_*g_\zeta-g_0\|_{g_0(r\cdot x)} &= \|r^*((\Phi_\zeta)_*g_\zeta-g_0)\|_{r^*g_0(r\cdot x)}\nonumber\\
    &=\|(\Phi_{r^{-2}\zeta_I,r^{-2}\beta_0}\circ F_{r^{-2}\zeta_I})_* g_{r^{-2}\zeta}-g_0\|_{g_0(x)}.
  \end{align}
  Here, since $\Phi_{0,0}=\mathrm{id}$ and $F_{0}=\mathrm{id}$, we obtain the following relation:
  \begin{equation}
    \lim_{r\to \infty} \|(\Phi_\zeta)_*g_\zeta-g_0\|_{g_0(r\cdot x)}=0.
  \end{equation}
  Therefore, it follows that the push-forward metric $(\Phi_\zeta)_*g_\zeta$ is an asymptotic hyperkähler metric on $Y_{\beta_0}$.
  Consequently, the assertion is proved.
  \end{enumerate}
\end{proof}

\begin{remark}\label{rem:to preserveness of asymp HK}
  \begin{enumerate}[itemsep=\medskipamount]
    \item For points $\zeta$ on $(D'_\mathbb{R})^c \times\mathfrak{z}^*_\mathbb{C} \subset \Omega'$, the map $\Phi_\zeta \colon X_\zeta \to Y_{\beta_0}$ gives a diffeomorphism. Note that the family of diffeomorphisms $\{\Phi_\zeta\}_\zeta$ can be chosen to depend smoothly on the parameter $\zeta$.

    \item For any rational point $\beta \in \mathfrak{z}^*_\mathbb{Q}\setminus D'_\mathbb{R}$, we have a period-surjective Poisson deformation $\mathcal{Y}^{\beta}\to\mathfrak{z}_\mathbb{C}^*$ of $Y_{\beta}$ (cf.~Prop.~\ref{prop:PD for rational point}). Using its natural smooth trivialization $F_\beta\colon\mathcal{Y}^\beta \to Y_\beta \times\mathfrak{z}^*_\mathbb{C}$ and the diffeomorphism $\Phi_{\beta,\beta_0} \colon Y_\beta \to Y_{\beta_0}$, we obtain a smooth trivialization as follows:
  \begin{equation}
    \mathcal{Y}^{\beta}\overset{F_\beta}{\longrightarrow} Y_{\beta}\times\mathfrak{z}^*_\mathbb{C} \xrightarrow{\Phi_{\beta,\beta_0}\times \mathrm{id}}Y_{\beta_0}\times\mathfrak{z}^*_\mathbb{C}. \label{eq:trivialization}
  \end{equation}

  \end{enumerate}
\end{remark}

\subsection{Proof of the Main Theorem}\label{subsec:Pf of Main thm}

To conclude this section, we prove the Torelli-type theorem (i.e.,~the bijectivity of the period map) for the moduli space of algebraic asymptotic hyperkähler structures on hyperkähler quotients under Assumption~\ref{assump:standing_assumptions}.

\subsubsection{Surjectivity of the Period Map}
The key to the proof lies in showing the existence of a moment map parameter $\zeta$ realizing the given period $\xi$, by utilizing the correspondence between hyperkähler quotients and Poisson deformations prepared in the previous subsections. \medskip

\begin{theorem}\label{main thm:Torelli-type thm for HK quot}
  Consider Setup~\ref{setup:HK quot}, and let $g_0$ be the natural hyperkähler cone metric on the regular locus $(X_0)_{\mathrm{reg}}$ of the central quotient $X_0 \coloneqq \mu^{-1}(0,0,0)/G$ (cf.~Prop.~\ref{prop:cone met on central HK quot}).
  Assume the following two conditions~(cf.~Assumption~\ref{assump:standing_assumptions}):
  \begin{enumerate}
      \item The stable locus $\mu^{-1}_\mathbb{C}(0)^s$ is non-empty.
      \item There exists a generic lattice point $\beta_0\in\mathfrak{z}^*_\mathbb{Z}$ such that the group $G_\mathbb{C}$ acts freely on the stable locus $M^{\beta_0-s}$, and the Kirwan map $\kappa_{\beta_0}$ is surjective.
  \end{enumerate}

  Then, the hyperkähler quotient $Y_{\beta_0}\coloneqq\mu^{-1}(\beta_0,0,0)/G \simeq\mu_\mathbb{C}^{-1}(0)/\!/_{\beta_0} G_\mathbb{C}$ is a projective crepant resolution of $X_0$.
  Consequently, one can consider the moduli space $\mathcal{M}$ of algebraic hyperkähler structures on $Y_{\beta_0}$ asymptotic to $g_0$ at infinity:
  \[ \mathcal{M}=\{(Y_{\beta_0},g,I,J,K)\mid \text{$g$ is algebraic and asymptotic to $g_0$} \}/(\text{isomorphism}). \]
  For this moduli space, the period map
  \[ p \colon \mathcal{M}\to H^2(Y_{\beta_0};\mathbb{R})\otimes\mathbb{R}^3 \]
  (cf.~Def.~\ref{def:period map}) is a bijection onto the period domain $\Omega \coloneqq (D_\mathbb{R}\otimes\mathbb{R}^3)^c$, where $D$ is the wall on $H^2(Y_{\beta_0};\mathbb{C})$ (cf.~Def.~\ref{def:wall}).
\end{theorem}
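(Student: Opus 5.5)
The first assertion, that $Y_{\beta_0}$ is a projective crepant resolution of $X_0$, is Corollary~\ref{cor:crepant_resol_Y_beta}. The moduli space $\mathcal{M}$ is therefore well-defined. By Propositions~\ref{prop:inj of period map} and \ref{prop:inclusion to period domain}, the period map $p$ is injective with image contained in $\Omega$. So the whole content of the theorem is the surjectivity of $p$ onto $\Omega$. The plan is to realize every $\xi\in\Omega$ as the period of a hyperkähler quotient metric $(\Phi_\zeta)_*g_\zeta$ for a suitable $\zeta\in\Omega'\subset\mathfrak{z}^*\otimes\mathbb{R}^3$.

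\textbf{Step 1: compute the periods of the quotient metrics.} For $\zeta=(\zeta_I,\zeta_J,\zeta_K)\in\Omega'$, Proposition~\ref{prop:preserveness of asymp HK met} shows that $(\Phi_\zeta)_*g_\zeta$ is asymptotic to $g_0$. It is algebraic because its twistor space is built by gluing the Poisson deformations $\mathcal{X}'$ and $\mathcal{Y}^\beta$, which are GIT quotients of $\mu_\mathbb{C}^{-1}$. It therefore defines a point of $\mathcal{M}$. I then claim
\[
p\bigl((\Phi_\zeta)_*g_\zeta\bigr)=(\kappa(\zeta_I),\kappa(\zeta_J),\kappa(\zeta_K)),
\]
where $\kappa\colon\mathfrak{z}^*\to H^2(Y_{\beta_0};\mathbb{R})$ is the real Kirwan map, transported by the diffeomorphisms $\Phi_{\beta,\beta_0}$.

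To prove the claim:
\begin{itemize}
\item The $I$-holomorphic form $\omega_J+i\omega_K$ on $X_\zeta$ is the fiber form of the family $\mathcal{Y}^{\zeta_I}$ over $\alpha=\zeta_J+i\zeta_K$. By Proposition~\ref{prop:moment map and PD}, together with Proposition~\ref{prop:PD for rational point} and the trivialization \eqref{eq:trivialization}, its class is $\kappa(\zeta_J)+i\kappa(\zeta_K)$. This first needs $\zeta_I$ rational.
\item Applying the same argument after a hyperkähler rotation gives $[\omega_I]=\kappa(\zeta_I)$.
\item For general real $\zeta_I$, I extend by continuity, using that $\{\Phi_\zeta\}$ depends smoothly on $\zeta$ (Remark~\ref{rem:to preserveness of asymp HK}) and that $\kappa$ is linear.
\end{itemize}
I also need to check that the identification $\mathcal{C}\simeq H^2(Y;\mathbb{C})$ used for $Y_{\beta_0}$ is compatible with the one for $Y_\beta$ via $\Phi_{\beta,\beta_0}$. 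This holds because all these trivializations come from a single family over $\mathfrak{z}^*_\mathbb{C}$.

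\textbf{Step 2: relate the walls.} Lemma~\ref{lemma:all Kirwan surj} makes $\kappa_{\beta_0}$ surjective, so the period-surjective Poisson deformation $\mathcal{Y}^{\beta_0}\to\mathfrak{z}^*_\mathbb{C}$ is the pullback of the universal one along the linear surjection $\kappa_{\beta_0}\colon\mathfrak{z}^*_\mathbb{C}\to\mathcal{C}$. Hence the discriminant loci satisfy $D'=\kappa_{\beta_0}^{-1}(D)$: non-affineness of a fiber is read off the fiber itself. By Proposition~\ref{prop:meaning of wall}, $D$ is a union of hyperplanes $H_\alpha$, so taking real parts gives $D'_\mathbb{R}=\kappa^{-1}(D_\mathbb{R})$.

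\textbf{Step 3: conclude.} Given $\xi=(\xi_I,\xi_J,\xi_K)\in\Omega$, surjectivity of $\kappa$ lets me choose $\zeta_A\in\kappa^{-1}(\xi_A)$ for each $A\in\{I,J,K\}$. The hard point is to make sure $\zeta=(\zeta_I,\zeta_J,\zeta_K)\in\Omega'$. Since $\xi\notin D_\mathbb{R}\otimes\mathbb{R}^3$, for each $\alpha$ some component $\xi_A$ lies off $H_{\alpha,\mathbb{R}}$. Because $D'_\mathbb{R}=\kappa^{-1}(D_\mathbb{R})$ and the walls of $D'_\mathbb{R}$ are pullbacks $\kappa^{-1}(H_{\alpha,\mathbb{R}})$, the same component $\zeta_A$ lies off $\kappa^{-1}(H_{\alpha,\mathbb{R}})$. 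Hence $\zeta\notin D'_\mathbb{R}\otimes\mathbb{R}^3$, i.e.\ $\zeta\in\Omega'$. By Step~1, $p((\Phi_\zeta)_*g_\zeta)=\xi$, which proves surjectivity.

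\textbf{Expected main obstacle.} The delicate step is the period computation in Step~1. I must check that the smooth identifications $F_{\zeta_I}$ and $\Phi_{\zeta_I,\beta_0}$ built from $\pi_J$, $\pi_K$ and hyperkähler rotations induce the same map on $H^2$ as the Poisson-deformation trivialization. For this I would argue that $H^2$ is locally constant along the connected parameter space $\Omega'$ (or along paths avoiding $D'_\mathbb{R}$ in one component), so that all these identifications agree. The Duistermaat--Heckman identification of Proposition~\ref{prop:moment map and PD} then gives the linear formula $[\omega_A]=\kappa(\zeta_A)$.
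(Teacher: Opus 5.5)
Your overall route is the paper's: compute the periods of $(\Phi_\zeta)_*g_\zeta$ at rational parameters using $\mathcal{Y}^{\zeta_I}$ and a hyperkähler-rotated family, extend by continuity to $\hat p=\kappa_{\beta_0,\mathbb{R}}\otimes\mathbb{R}^3$, use $D'_\mathbb{R}=\kappa_{\beta_0,\mathbb{R}}^{-1}(D_\mathbb{R})$, and take preimages. The gap is a mismatch between the domain where Step~1 is proved and the point that Step~3 produces.

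\textbf{Step~1 does not cover all of $\Omega'$.} You state Step~1 for every $\zeta\in\Omega'$, but the argument needs more.
\begin{itemize}
\item The fiber-form computation needs $\zeta_I\notin D'_\mathbb{R}$. Otherwise Proposition~\ref{prop:PD for rational point} does not give a period-surjective Poisson deformation of a smooth $Y_{\zeta_I}$.
\item The rotated computation of $[\omega_I]$ needs a second component, say $\zeta_J$, rational and off $D'_\mathbb{R}$.
\item Your continuity extension uses smooth dependence of $\Phi_\zeta$. Remark~\ref{rem:to preserveness of asymp HK} provides this only on $\{\zeta_I\notin D'_\mathbb{R}\}$. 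At other points of $\Omega'$, Proposition~\ref{prop:preserveness of asymp HK met} builds $\Phi_\zeta$ only after a $\zeta$-dependent rotation. So there is no single continuous family over $\Omega'$ on which to run a density or local-constancy argument.
\end{itemize}

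\textbf{Step~3 can land outside that domain.} You only guarantee $\zeta\in\Omega'$. A point $\xi\in\Omega$ may have $\xi_I\in D_\mathbb{R}$, since only some component must avoid each $H_{\alpha,\mathbb{R}}$. Then every preimage has $\zeta_I\in D'_\mathbb{R}$, and none of Step~1 applies to it.

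\textbf{The fix is what the paper does.} Before choosing preimages, rotate $\xi$ so that $\xi_I,\xi_J\notin D_\mathbb{R}$. This is possible because, for each wall, the vector $(\langle\Sigma_\alpha,\xi_I\rangle,\langle\Sigma_\alpha,\xi_J\rangle,\langle\Sigma_\alpha,\xi_K\rangle)\in\mathbb{R}^3$ is non-zero when $\xi\in\Omega$. Hence the rotations that put the new $I$- or $J$-component on $H_{\alpha,\mathbb{R}}$ form a proper closed subset of $SO(3)$, and a generic rotation avoids all walls. Then $D'_\mathbb{R}=\kappa^{-1}(D_\mathbb{R})$ forces any preimages to lie in $\Omega^{IJ}=(D'_\mathbb{R})^c\times(D'_\mathbb{R})^c\times\mathfrak{z}^*$. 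That is exactly where your Step~1 (rational points, then density) is valid. Equivalently, you can note that the construction and $\kappa\otimes\mathrm{id}_{\mathbb{R}^3}$ are both $SO(3)$-equivariant.

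Separately, your one-line algebraicity claim (``gluing $\mathcal{X}'$ and $\mathcal{Y}^\beta$'') is not an argument. It also does not match Remark~\ref{remark: twistor spaces}, which glues two copies of a single Poisson deformation of $Y$. The paper instead identifies the twistor space with the relative GIT quotient $\hat\mu^{-1}(s_\zeta)/\!/_{\beta}G_\mathbb{C}\subset\mathcal{Y}^\beta(1)$ at rational $\zeta_I$, and then extends by continuity (Lemma~\ref{lemma:g_zeta is algebraic}).
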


\begin{proof}
  By Corollary~\ref{cor:crepant_resol_Y_beta}, $Y_{\beta_0}$ is a projective crepant resolution of the central quotient $X_0$.
  Since the injectivity of the period map $p$ and the inclusion $p(\mathcal{M}) \subset \Omega$ have already been established in \S \ref{sec:period map and period domain}, it remains to prove its surjectivity.
  Let $D'$ be the discriminant locus on $\mathfrak{z}^*_\mathbb{C}$ (cf.~Remark~\ref{rem:discriminant locus on z^*_C}), and let its real part be $D'_\mathbb{R} \subset \mathfrak{z}^*$. We show the following three claims in order.
  \medskip

  \begin{enumerate}[wide=\parindent, itemsep=\medskipamount]
    \item Construction of the smooth map $\hat{p} \colon \Omega^{IJ}\to \Omega$:\quad
    Consider the set $\Omega^{IJ} \coloneqq (D'_\mathbb{R})^c\times (D'_\mathbb{R})^c\times\mathfrak{z}^*$, which consists of points on $\mathfrak{z}^*\otimes\mathbb{R}^3$ whose $I$- and $J$-components are not contained in $D'_\mathbb{R}$.
    By Proposition~\ref{prop:preserveness of asymp HK met}, for any $\zeta=(\zeta_I,\zeta_J,\zeta_K) \in \Omega^{IJ}$, we have a diffeomorphism $\Phi_{\zeta} \colon X_\zeta \to Y_{\beta_0}$.
    Let $\omega_A$ ($A\in\{I,J,K\}$) be the associated Kähler form for the hyperkähler metric $g_\zeta$ on $X_\zeta$, and let the push-forward forms by the map $\Phi_\zeta$ be $\omega'_A \coloneqq (\Phi_\zeta)_*{\omega_A}$.
    Then, the map $\hat{p} \colon \Omega^{IJ} \to \Omega$ is defined by $\hat{p}(\zeta)=([\omega'_I],[\omega'_J],[\omega'_K])$.
    Here, $[\omega_A']\in H^2(Y_{\beta_0};\mathbb{R})$ denotes the period of $\omega_A'$.
    By Remark~\ref{rem:to preserveness of asymp HK}, since the family of maps $\{\Phi_\zeta\}_\zeta$ depends smoothly on the parameter $\zeta$, the map $\hat{p}$ is a smooth map.

    \item Periods on the rational set $\Omega_\mathbb{Q}^{IJ}$:\quad
    Let $\mathfrak{z}^*_\mathbb{Z}$ be the character lattice, and denote its scalar extension to $\mathbb{Q}$ by $\mathfrak{z}^*_\mathbb{Q} = \mathfrak{z}^*_\mathbb{Z}\otimes_\mathbb{Z}\mathbb{Q}$.
    Let $\Omega_\mathbb{Q}^{IJ} \coloneqq \Omega^{IJ}\cap(\mathfrak{z}^*_\mathbb{Q}\otimes\mathbb{Q}^3)$ be the set of rational points in $\Omega^{IJ}$.
    We show that the restriction of the map $\hat{p}$ to the rational set $\Omega_\mathbb{Q}^{IJ}$ can be written as follows:
    \begin{equation}
      \hat{p}|_{\Omega_\mathbb{Q}^{IJ}}=(\kappa_{\beta_0,\mathbb{R}}\otimes\mathbb{R}^3)|_{\Omega_\mathbb{Q}^{IJ}}. \label{eq:for hat p}
    \end{equation}
    Here, the map $\kappa_{\beta_0,\mathbb{R}} \colon \mathfrak{z}^*_\mathbb{Q} \to H^2(Y_{\beta_0};\mathbb{R})$ represents the scalar extension to $\mathbb{R}$ of the Kirwan map $\kappa_{\beta_0,\mathbb{Z}}$, i.e., $\kappa_{\beta_0,\mathbb{R}}=\kappa_{\beta_0,\mathbb{Z}}\otimes_\mathbb{Z}\mathbb{R}$.
    For any $\zeta=(\beta_I,\beta_J,\beta_K)\in \Omega_\mathbb{Q}^{IJ}$, since $\beta_I\notin D'_\mathbb{R}$, we obtain a period-surjective Poisson deformation $\mathcal{Y}^{\beta_I}$ of the projective crepant resolution $Y_{\beta_I}$ of the central quotient $X_0$. Considering the period map with respect to the smooth trivialization \eqref{eq:trivialization} in Remark~\ref{rem:to preserveness of asymp HK}, it is naturally isomorphic to the Kirwan map $\kappa_{\beta_0}=\kappa_{\beta_0,\mathbb{Z}}\otimes_\mathbb{Z}\mathbb{C}$.
    Therefore, the period of the point $\alpha \coloneqq \beta_J+i\beta_K\in\mathfrak{z}^*_\mathbb{C}$ is given by $\kappa_{\beta_0}(\alpha)$.
    Applying a hyperkähler rotation to the point $\beta$ to obtain the point $\zeta'=(\beta_J,\beta_K,\beta_I)$, and similarly considering a period-surjective Poisson deformation $\mathcal{Y}^{\beta_J}$ of the projective crepant resolution $Y_{\beta_J}$, the period of the point $\alpha' \coloneqq \beta_K+i\beta_I$ is given by $\kappa_{\beta_0}(\alpha')$.
    Consequently, we obtain the following relation:
    \[\hat{p}(\zeta)=(\kappa_{\beta_0,\mathbb{R}}\otimes\mathbb{R}^3)(\zeta).\]
    Thus, relation \eqref{eq:for hat p} is established.

    \item Proof of the surjectivity of the period map $p$:\quad
    $\Omega_\mathbb{Q}^{IJ}$ is a dense subset of $\Omega^{IJ}$.
    Also, the map $\hat{p} \colon \Omega^{IJ}\to \Omega$ is continuous, and the target space $\Omega$ is a Hausdorff space.
    Therefore, relation \eqref{eq:for hat p} extends to a relation on $\Omega^{IJ}$:
    \begin{equation}
      \hat{p}=\kappa_{\beta_0,\mathbb{R}}\otimes\mathbb{R}^3. \label{eq:hat p = 3 kirwan maps}
    \end{equation}
    Here, by assumption, the Kirwan map $\kappa_{\beta_0,\mathbb{R}} \colon \mathfrak{z}^* \to H^2(Y_{\beta_0};\mathbb{R})$ is surjective. Furthermore, Proposition~\ref{prop:meaning of wall} on the universal Poisson deformation implies that the discriminant locus $D'$ on $\mathfrak{z}^*_\mathbb{C}$ coincides with the inverse image of the wall $D$ under the Kirwan map $\kappa_{\beta_0}$. In particular, taking the real parts, the following relation is obtained:
    \begin{equation}
      D'_\mathbb{R}=\kappa_{\beta_0,\mathbb{R}}^{-1}(D_\mathbb{R}) \label{eq:rel of D' and D}
    \end{equation}
    Take an arbitrary period $\xi=(\xi_I,\xi_J,\xi_K)\in \Omega$. By applying a hyperkähler rotation if necessary, we may assume $\xi_I,\xi_J \notin D_\mathbb{R}$. Then, by relations \eqref{eq:hat p = 3 kirwan maps} and \eqref{eq:rel of D' and D}, there exists $\zeta\in \Omega^{IJ}$ satisfying $\hat{p}(\zeta)=\xi$.
    By Proposition~\ref{prop:preserveness of asymp HK met}, taking the push-forward of the natural hyperkähler metric $g_\zeta$ on the hyperkähler quotient $X_\zeta$ by the diffeomorphism $\Phi_\zeta \colon X_\zeta \to Y_{\beta_0}$, one can regard $g_\zeta$ as an asymptotic hyperkähler metric on $Y_{\beta_0}$.
    Moreover, by the argument on hyperkähler-quotient twistor spaces, it can be shown that $g_\zeta$ is an algebraic hyperkähler metric on $Y_{\beta_0}$ (we provide the detailed proof of this algebraicity in Lemma~\ref{lemma:g_zeta is algebraic} below).
    That is, $[X_\zeta \simeq Y_{\beta_0},g_\zeta,I,J,K]\in \mathcal{M}$. From $\hat{p}(\zeta)=\xi$, we obtain:
    \[p([X_\zeta \simeq Y_{\beta_0},g_\zeta,I,J,K]) = \xi.\]
    This implies the surjectivity of the period map $p$. Consequently, the assertion of the theorem follows.
  \end{enumerate}
\end{proof}

\subsubsection{Algebraicity via Twistor Spaces}
By revisiting the period arguments in the proof of the main theorem from the perspective of twistor spaces, we can establish the algebraicity of the push-forward metric. We detail this proof in the following lemma.

\begin{lemma}[Proof that $g_\zeta$ is an algebraic hyperkähler metric on $Y_{\beta_0}$]\label{lemma:g_zeta is algebraic}
  The push-forward metric $(\Phi_\zeta)_*g_\zeta$ is an algebraic hyperkähler metric (i.e., the corresponding twistor space is algebraic).
\end{lemma}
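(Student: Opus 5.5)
The plan is to identify the twistor space of $(X_\zeta,g_\zeta)$ directly as a pullback $Z_s$ of the principal twistor model in the sense of Proposition~\ref{prop:pullback twistor model}. Algebraicity then reduces to the algebraicity of the family $\mathcal{Y}^{\beta}$ given by GIT quotients. First I would recall the hyperkähler-quotient twistor construction of \cite{HKLR}. The twistor space $Z_M=\mathbb{H}^n\times\mathbb{P}^1\simeq\mathbb{C}^{2n}\otimes\mathcal{O}(1)$ of $M$ carries a $G_\mathbb{C}$-action and a twisted holomorphic moment map $\mu_{\mathcal{O}(2)}\colon Z_M\to\mathfrak{g}^*_\mathbb{C}\otimes\mathcal{O}(2)$. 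Over the affine chart $u$ it is given by $\mu_\mathbb{C}+2u\mu_I-u^2\bar\mu_\mathbb{C}$. The parameter $\zeta$ determines the real section
\[
s_\zeta(u)\coloneqq(\zeta_J+i\zeta_K)+2u\zeta_I-u^2(\zeta_J-i\zeta_K)\in H^0(\mathbb{P}^1,\mathfrak{z}^*_\mathbb{C}(2))^{\sigma_2}.
\]
The twistor space of $X_\zeta$ is the fiberwise quotient of $\mu_{\mathcal{O}(2)}^{-1}(s_\zeta)$ by $G_\mathbb{C}$, with stability taken over each $u$. Its twistor lines are the images of the horizontal sections $x\times\mathbb{P}^1$, $x\in\mu^{-1}(\zeta)$.

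Next I would use the two charts. Over $\mathbb{C}_u$, and for $\zeta\in\Omega^{IJ}$, the fiber over $u$ is a hyperkähler quotient whose complex structure is $I_u$. By King's theorem (Theorem~\ref{thm:King}) applied after a hyperkähler rotation, it is the GIT quotient $\mu_\mathbb{C}^{-1}(s_\zeta(u))/\!/_{\beta}G_\mathbb{C}$ for a suitable generic rational $\beta$. By Proposition~\ref{prop:smooth ness of Omega'}, its isomorphism class does not depend on the choice of chamber for $\beta$. Hence over $\mathbb{C}_u$ the twistor space is the pullback of the algebraic Poisson deformation $\mathcal{Y}^{\beta}\to\mathfrak{z}^*_\mathbb{C}$ of Proposition~\ref{prop:PD for rational point} along the polynomial map $u\mapsto s_\zeta(u)$. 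The same holds over the chart $\mathbb{C}_{u^{-1}}$. The gluing map is induced by the linear map $x\mapsto u^{-1}x$ on $\mathbb{C}^{2n}\otimes\mathcal{O}(1)$, and this map is $G_\mathbb{C}$-equivariant and algebraic. The real structure comes from the quaternionic antilinear map $j$ on $\mathbb{H}^n$ composed with the antipodal map, and it also descends algebraically. This exhibits $Z$ as two copies of an algebraic Poisson deformation over $\mathbb{C}$ glued algebraically, as in Remark~\ref{remark: twistor spaces}(3).

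Finally, I would transport this structure through $\Phi_\zeta$. Using the smooth trivialization \eqref{eq:trivialization} and relation \eqref{eq:hat p = 3 kirwan maps}, the period of the relative symplectic form is $(\kappa_{\beta_0}\otimes\mathcal{O}(2))(s_\zeta)$. The twistor model is therefore $Z_s$ for $s=\kappa_{\beta_0}(s_\zeta)$, pulled back from the algebraic PTM attached to $\mathcal{Y}^{\beta_0}$, which is the universal Poisson deformation by Proposition~\ref{prop:moment map and PD}.

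The main obstacle is the choice of stability parameter $\beta$, which varies with $u$. On the chart $\mathbb{C}_u$ one needs a single rational $\beta$ that makes the fiberwise quotient of $\mu_\mathbb{C}^{-1}(s_\zeta(u))$ agree with the analytic fiber for every $u$. At special $u$ the real part of the hyperkähler-rotated parameter may lie in $D'_\mathbb{R}$. I would first try to choose a fixed generic $\beta$ and use that $s_\zeta(u)\notin D'$ for all $u$ when $\zeta\in\Omega^{IJ}$. If that holds, all fibers are affine by Proposition~\ref{prop:meaning of wall}, so any $\beta$ works and the fibers coincide with the affine quotients $\mathcal{X}'_{s_\zeta(u)}$. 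Failing that, I would cover $\mathbb{C}_u$ by Zariski opens on which a fixed $\beta$ works and glue them via the isomorphisms of Remark~\ref{remark:to crepant resol prop}.
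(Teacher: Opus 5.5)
Your overall route is the same as the paper's Steps (1)--(2): realize the twistor space of $X_\zeta$ as a fiberwise GIT quotient of $\hat\mu^{-1}(s_\zeta)\subset\mathcal{O}(1)^{\oplus 2n}$, then read off algebraicity chart by chart. The main line of your second paragraph, however, has a genuine gap. The smoothness result for $\zeta\in\Omega'$ says nothing about independence of the chamber of $\beta$, and the conclusion $Z|_{\mathbb{C}_u}\cong s_\zeta^*\mathcal{Y}^\beta$ for a single $\beta$ is false. Let $\ell_\alpha$ be the real linear forms cutting out $D'$. For $\zeta\in\Omega^{IJ}$ we have $\ell_\alpha(\zeta_J)\neq 0$, so each $\ell_\alpha(s_\zeta(u))$ is a quadratic in $u$ with nonzero leading and constant coefficients. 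Its roots are two antipodal points $u_\pm\in\mathbb{C}^*$. Hence $s_\zeta$ always meets $D'$ once $D'\neq\emptyset$, so the first option in your last paragraph never applies, and both crossing points lie in $\mathbb{C}_u$ and in $\mathbb{C}_{u^{-1}}$.

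The fiber over $u_+$ is non-affine, so it contains a compact curve $C$. The real structure $\tau$ carries $C$ to the same surface of $X_\zeta$ over $u_-$, with the opposite complex orientation. Thus $[C]+[\tau(C)]=0$ in $H_2(Z|_{\mathbb{C}_u})\cong H_2(X_\zeta)$, so $Z|_{\mathbb{C}_u}$ is not Kähler. By contrast, $s_\zeta^*\mathcal{Y}^\beta$ is projective over an affine family, hence quasi-projective. Algebraically, the stability on the fiber over $u$ is a positive multiple of $\theta_u:=\langle\zeta,n_u\rangle$, where $n_u\in S^2$ is the direction of $u$. Since $\ell_\alpha(\theta_{u_\pm})$ have opposite signs, a fixed $\beta$ lies in the wrong chamber at one of $u_\pm$. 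Also $\tau(gx)=\sigma(g)\tau(x)$ with $\sigma$ the Cartan involution, so $\tau$ sends $\chi$-semistable points to $\chi^{-1}$-semistable ones. Your claim that the real structure descends to a fixed-character quotient therefore fails as well.

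So your ``failing that'' patching is not a fallback; it is the actual proof, and it works. The set $B=\{u: s_\zeta(u)\in D'\}$ is finite and $\tau$-stable. For each $u_0\in B$, take a rational $\beta_{u_0}$ in the open chamber of $\mu_\mathbb{C}^{-1}(s_\zeta(u_0))$ containing $\theta_{u_0}$, with $\beta_{\tau(u_0)}=-\beta_{u_0}$. Use $s_\zeta^*\mathcal{Y}^{\beta_{u_0}}$ over $\mathbb{C}_u\setminus(B\setminus\{u_0\})$, and glue over $\mathbb{C}_u\setminus B$ by $\nu_{\beta'}^{-1}\circ\nu_{\beta}$, which are algebraic isomorphisms there. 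Then $\tau$ descends, and the two charts glue via $x\mapsto u^{-1}x$. You still need to show that the HKLR quotient is biholomorphic to each piece as a family over the chart, not only fiberwise (a relative Kempf--Ness statement). The result is separated but necessarily not quasi-projective over a chart.

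Your third paragraph can be dropped. Algebraicity is intrinsic, and $\Phi_\zeta$ transports the whole twistor space; identifying it with $Z_s$ from periods alone would be unjustified. Also, $\mathcal{Y}^{\beta_0}$ is universal only if $\kappa_{\beta_0}$ is injective.

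The paper uses the single character $\beta=\zeta_I$ at rational points and then extends by density and continuity. The patched construction instead treats every $\zeta\in\Omega^{IJ}$ directly, with no limiting argument. The obstruction above also bears on the paper's Step (2) as written: if $\mathcal{Y}^\beta(1)$ is the $\mathbb{C}^*$-associated bundle, each chart of $Z^\beta_\zeta$ is quasi-projective.
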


\begin{proof}
  \begin{enumerate}[wide=\parindent, itemsep=\medskipamount]
    \item Holomorphic moment map $\hat{\mu}$ on the twistor space:\quad
      Define the holomorphic moment map $\hat{\mu} \colon Z_{\mathrm{std}}\to\mathfrak{g}^*_\mathbb{C}(2)$ on the standard twistor space $Z_{\mathrm{std}}=\mathcal{O}(1)^{\oplus 2n}$ of $M=\mathbb{H}^n$ by $\hat{\mu}(u)=\mu_\mathbb{C} + 2u\mu_I -u^2\bar{\mu}_\mathbb{C}$ (cf.~{\cite[p.~560]{HKLR}}).
      Denoting the real section on $\mathbb{P}^1$ corresponding to the point $\zeta$ by $s_\zeta(u) = \zeta_\mathbb{C} + 2u\zeta_I - u^2\bar{\zeta}_\mathbb{C}$, the pullback $\tilde{Z}_\zeta \coloneqq \hat{\mu}^{-1}(s_\zeta) \to\mathbb{P}^1$ of $\hat{\mu}$ is determined.
      Here, the Lie group $G_\mathbb{C}$ acts on $\tilde{Z}_\zeta$ fiberwise.

    \item GIT quotients, twistor spaces, and algebraicity at rational points:\quad
  Suppose that $\zeta\in \Omega^{IJ}$ and $\zeta_I=\beta \in \mathfrak{z}^*_\mathbb{Z}$. By the result on periods in Step (2) of the proof of Theorem~\ref{main thm:Torelli-type thm for HK quot}, the twistor space of the metric $g_\zeta$ on $X_\zeta$ is given by the relative $\beta$-GIT quotient $Z_{\zeta}^\beta \coloneqq \tilde{Z}_\zeta /\!/_\beta G_\mathbb{C}\subset\mathcal{Y}^\beta(1)$ (obtained by taking the quotient fiberwise). Here, $\mathcal{Y}^\beta(1)$ denotes the fiber bundle obtained by applying the $\mathbb{C}^*$-gluing construction \cite[\S 3.2]{Kotani_26_PTM} to the Poisson deformation $\mathcal{Y}^\beta$.
  Meanwhile, we can also consider the relative $\beta_0$-GIT quotient $Z_{\zeta}^{\beta_0} \coloneqq \tilde{Z}_\zeta /\!/_{\beta_0} G_\mathbb{C}\subset\mathcal{Y}^{\beta_0}(1)$. Via the smooth trivialization \eqref{eq:trivialization} (cf.~Remark~\ref{rem:to preserveness of asymp HK}), these two fibrations $Z_{\zeta}^{\beta}$ and $Z_{\zeta}^{\beta_0}$ are naturally isomorphic.
  Therefore, the twistor space corresponding to the metric $(\Phi_\zeta)_*g_\zeta$ coincides with the fibration $Z_{\zeta}^{\beta_0}$. Furthermore, by utilizing the $\mathbb{R}^+$-action on $M$, this conclusion naturally extends to any rational point $\zeta_I \in \mathfrak{z}^*_\mathbb{Q}.$

    \item Extension to general $\zeta$:\quad
    Since rational points are dense in $\Omega^{IJ}$ and the family of diffeomorphisms $\{\Phi_\zeta\}_\zeta$ varies smoothly with respect to $\zeta$, the twistor space associated with the push-forward metric $(\Phi_\zeta)_*g_\zeta$ must coincide with $Z_{\zeta}^{\beta_0}$ for all $\zeta \in \Omega^{IJ}$ by continuity. Furthermore, since the twistor space $Z_{\zeta}^{\beta_0}$ is algebraic (cf.~Remark~\ref{remark: twistor spaces}), this implies that the push-forward metric $(\Phi_\zeta)_*g_\zeta$ is an algebraic hyperkähler metric on $Y_{\beta_0}$ for every $\zeta \in \Omega^{IJ}$.
  \end{enumerate}
\end{proof}

\subsubsection{Twistor-Geometric Reformulation}
  From the perspective of twistor spaces, our main theorem can be reformulated using the principal twistor model \cite{Kotani_26_PTM}.
\begin{remark}
  Consider the principal twistor model $\mathcal{Y}(1)\to\mathcal{C}(2)$ (cf.~\S\ref{subsec:PTM}) determined by the crepant resolution $Y_{\beta_0}$ and the cone metric $g_0$. Here, the base space is $\mathcal{C}\simeq H^2(Y_{\beta_0};\mathbb{C})$. Let $Z_s$ be the twistor model determined by a real section $s$ of $\mathcal{C}(2)$. Then, the following holds:
  \[\text{$Z_s$ is a twistor space} \iff s \not\subset D.\]
  Combined with Proposition~\ref{prop:meaning of wall}, this means that $Z_s$ is a twistor space if and only if its generic fiber is affine. (In particular, under the assumptions of Theorem~\ref{main thm:Torelli-type thm for HK quot}, this provides an answer to \cite[Question~4.30]{Kotani_26_PTM}.)
\end{remark}

%% file: source/Examples.tex
\section{Examples}\label{sec:examples}

In this section, we show that the Torelli-type theorem (Theorem~\ref{main thm:Torelli-type thm for HK quot}) is applicable to toric hyperkähler varieties, Nakajima quiver varieties, and ALE gravitational instantons.
\medskip

To apply Theorem~\ref{main thm:Torelli-type thm for HK quot} to specific hyperkähler quotients, it is necessary to verify Assumption~\ref{assump:standing_assumptions}, which consists of the following three conditions:
\begin{enumerate}
  \item The stable locus $\mu^{-1}_\mathbb{C}(0)^s$ is non-empty.
  \item There exists a generic lattice point $\beta \in \mathfrak{z}^*_\mathbb{Z}$ such that the group $G_\mathbb{C}$ acts freely on the stable locus $M^{\beta-s}$ (cf.~\S \ref{subsec:GIT quot}).
  \item For the generic lattice point $\beta$ above, the Kirwan map $\kappa_{\beta}$ for the hyperkähler quotient $Y_{\beta} \coloneqq \mu^{-1}(\beta,0,0)/G \simeq \mu_\mathbb{C}^{-1}(0)/\!/_{\beta} G_\mathbb{C}$ is surjective (cf.~\S \ref{subsec:PD of HK quot}).
\end{enumerate}

\subsection{Toric Hyperkähler Varieties}
As a standard example to which the Torelli-type theorem is applicable, we introduce the toric hyperkähler variety $X_\zeta(A)$ determined by a coloop-free unimodular matrix $A$.
\medskip

\begin{definition}[{\cite{Bielawski-Dancer00}}]\label{def:toric HK mfd}
  Let $V=\mathbb{C}^n$ be a complex $n$-dimensional vector space.
  Let the standard $n$-dimensional real torus $T^n$ act on $V$ diagonally with weight $1$.
  For an integer $d$ such that $0 < d < n$, consider the action of a $d$-dimensional subtorus $G = T^d \subset T^n$. This inclusion is described via a surjective map $A \colon \mathbb{Z}^n \to \mathbb{Z}^d$ determined by a $d \times n$ integer matrix $A$ (the weight matrix) of rank $d$.
  We also consider the dual representation of $G$ on $V^*$, and consider the representation of $G$ on $M=V\oplus V^*$.
  This representation determines a natural hyperkähler moment map $\mu_A \colon M \to \mathfrak{g}^* \otimes \mathbb{R}^3 \cong \mathbb{R}^d \otimes \mathbb{R}^3$.
  For each parameter $\zeta \in\mathfrak{z}^*\otimes\mathbb{R}^3 \simeq\mathbb{R}^{d}\otimes\mathbb{R}^3$, we call the hyperkähler quotient $X_\zeta(A)$ determined by the hyperkähler moment map $\mu_A$ a \textit{toric hyperkähler variety}.
\end{definition}

\begin{remark}
  If $G$ acts freely on the non-empty fiber $\mu_A^{-1}(\zeta)$, then the real dimension of $X_\zeta(A)$ is $4(n-d)$.
\end{remark}

We prove the following lemma on the condition (1):

\begin{lemma}\label{lemma:for coloop-free}
  Assume that the weight matrix $A$ has no coloops (i.e.,~every column vector of $A$ is contained in the subspace spanned by the remaining column vectors).
  Then, $\mu_\mathbb{C}^{-1}(\mathbf{0})^s \neq \emptyset$.
\end{lemma}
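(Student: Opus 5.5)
We will exhibit an explicit point of $\mu_\mathbb{C}^{-1}(\mathbf{0})$ that is stable for $\beta=0$. In the sense of Definition~\ref{def:b-ss}, this means a point with a closed $G_\mathbb{C}$-orbit and finite stabilizer. Write $M=V\oplus V^*$ with coordinates $(z,w)\in\mathbb{C}^n\oplus\mathbb{C}^n$. Let $a_1,\dots,a_n\in\mathbb{Z}^d$ be the columns of $A$. The torus $G_\mathbb{C}=(\mathbb{C}^*)^d$ acts by $t\cdot(z_i,w_i)=(t^{a_i}z_i,t^{-a_i}w_i)$, and the complex moment map is
\[
\mu_\mathbb{C}(z,w)=\sum_{i=1}^n z_iw_i\,a_i \in \mathbb{C}^d .
\]

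\textbf{Step 1: construct a point in the zero fibre.} We use the coloop-free hypothesis as follows. Each column $a_j$ lies in the span of the other columns. Since $A$ has rank $d$, this is equivalent to saying that every $a_j$ appears with nonzero coefficient in some linear relation among the columns. Summing generic multiples of such relations produces a single relation $\sum_i c_i a_i=0$ with all $c_i\neq 0$; the set of "bad" choices is a finite union of hyperplanes, so generic multiples avoid it. Now set $z_i=1$ and $w_i=c_i$ for all $i$. Then $\mu_\mathbb{C}(z,w)=0$, and every coordinate $z_i$ and $w_i$ is nonzero.

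\textbf{Step 2: verify stability via Hilbert--Mumford.} We apply Proposition~\ref{prop:Hilbert-Munford criterion} with $\beta=0$, so the condition $\langle\chi,\lambda\rangle=0$ is automatic. Let $\lambda\in X_*(T)=\mathbb{Z}^d$ be a nontrivial one-parameter subgroup. The weights occurring at our point are $\pm a_i$ for every $i$, because all coordinates are nonzero. Hence $\lim_{t\to0}\lambda(t)\cdot x$ exists only if $\langle a_i,\lambda\rangle\ge0$ and $-\langle a_i,\lambda\rangle\ge0$ for all $i$, that is, only if $\langle a_i,\lambda\rangle=0$ for all $i$. Since the columns span $\mathbb{Q}^d$ (rank $d$), this forces $\lambda=0$, a contradiction. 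So no destabilizing $\lambda$ exists.

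To conclude $x\in\mu_\mathbb{C}^{-1}(0)^s$, we also need $x$ to be semistable for $\beta=0$. This is trivial, since the constant function is an invariant of weight $\chi^0$. Alternatively, the same argument gives that the orbit is closed, because $0$ lies in the interior of the convex hull of the weights $\{\pm a_i\}$. The stabilizer is $\{t: t^{a_i}=1\ \forall i\}$, which is finite because the $a_i$ span. With $V=\mu_\mathbb{C}^{-1}(0)$, Proposition~\ref{prop:Hilbert-Munford criterion} then gives $x\in V^{s}$.

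The main obstacle is Step 1: making the choice of a relation with all coefficients nonzero rigorous. The route is to show that the linear map $\mathbb{C}^n\supset\ker A\to\mathbb{C}$, $c\mapsto c_j$, is nonzero for each $j$ exactly when $a_j$ is not a coloop. Then $\ker A\otimes\mathbb{C}$ minus the union of the coordinate hyperplanes $\{c_j=0\}$ is nonempty, since a vector space is not a finite union of proper subspaces. Any $c$ in this complement is a relation with all coefficients nonzero, as required.
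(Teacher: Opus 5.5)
Your proof is correct and follows essentially the same route as the paper. You pick a vector in $\ker A$ with all coordinates nonzero, turn it into a point of $\mu_\mathbb{C}^{-1}(\mathbf{0})$ with every $z_i,w_i\neq 0$ (the paper takes $z_i=w_i=\sqrt{x_i}$ where you take $z_i=1$, $w_i=c_i$), and then apply Hilbert--Mumford, where the opposite weights $\pm a_i$ force $\langle a_i,\lambda\rangle=0$ for all $i$ and hence $\lambda=0$. Your argument that $\ker A$ is not contained in any coordinate hyperplane (a vector space is not a finite union of proper subspaces) makes explicit a step the paper only asserts.
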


\begin{proof}
  Assume that every column vector of the weight matrix $A$ is contained in the subspace spanned by the remaining column vectors.
  Then, the kernel $\ker(A) \subset \mathbb{C}^n$ of $A$ is not contained in any coordinate hyperplane $\{x_i = 0\}$, so one can choose a vector $\mathbf{x} = (x_1, \ldots, x_n) \in \ker(A)\subset\mathbb{C}^n$ whose components are all non-zero.
  Here, we define a point $\mathbf{p} = (\mathbf{z},\mathbf{w})$ in the representation space $M = V \oplus V^*$ by $z_i = w_i = \sqrt{x_i}$. Then, for the complex moment map, we have $\mu_\mathbb{C}(\mathbf{p}) = \sum_{i=1}^n  z_i w_i \mathbf{A}_i= \sum_{i=1}^n  x_i \mathbf{A}_i = \mathbf{0}$. Here, $\mathbf{A}_i$ represents the $i$-th $d$-dimensional column vector of the weight matrix $A$.
  \medskip

  We show that this point $\mathbf{p}$ is a stable point by the Hilbert--Mumford criterion (cf.~Prop.~\ref{prop:Hilbert-Munford criterion}).
  Any one-parameter subgroup $\lambda_{\mathbf{v}} \colon \mathbb{C}^* \to G_\mathbb{C}$ of the group $G_\mathbb{C} \cong (\mathbb{C}^*)^d$ can be written by a certain vector $\mathbf{v} \in \mathbb{Z}^d$ as follows: for a point $\mathbf{p}=(\mathbf{z},\mathbf{w})\in M$,
  \begin{equation}
    \lambda_{\mathbf{v}}(t) \cdot (\mathbf{z},\mathbf{w}) = ( t^{\langle \mathbf{v}, \mathbf{A}_1 \rangle} z_1, \dots, t^{\langle \mathbf{v}, \mathbf{A}_n \rangle} z_n, \ t^{-\langle \mathbf{v}, \mathbf{A}_1 \rangle} w_1, \dots, t^{-\langle \mathbf{v}, \mathbf{A}_n \rangle} w_n ).
  \end{equation}

  Since $z_i \neq 0$ and $w_i \neq 0$ for all $i$, for the limit $\displaystyle\lim_{t \to 0} \lambda_{\mathbf{v}}(t) \cdot \mathbf{p}$ to exist in $M$, the conditions $\langle \mathbf{v}, \mathbf{A}_i \rangle \ge 0$ and $-\langle \mathbf{v}, \mathbf{A}_i \rangle \ge 0$ must hold simultaneously.
  This implies $\langle \mathbf{v}, \mathbf{A}_i \rangle = 0$ for any $i$, but since $A$ is surjective, we deduce $\mathbf{v} = \mathbf{0}$.
  Therefore, the limit does not exist for any non-trivial one-parameter subgroup. Consequently, the point $\mathbf{p}\in M$ forms a stable point, which proves $\mu_\mathbb{C}^{-1}(\mathbf{0})^s \neq \emptyset$.
\end{proof}

We prove the following lemma on the condition (2):

\begin{lemma}\label{lemma:free_action_toric}
  Assume that the weight matrix $A$ is unimodular (i.e.,~every non-zero $d \times d$ minor is $\pm 1$).
  Then, for a generic lattice point $\beta \in \mathfrak{z}^*_\mathbb{Z}\simeq\mathbb{Z}^d$, the group $G_\mathbb{C}$ acts freely on the stable locus $M^{\beta-s}$.
\end{lemma}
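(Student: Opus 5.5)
We plan to show that for generic $\beta$ every $\beta$-stable point has trivial stabilizer in $G_\mathbb{C}\cong(\mathbb{C}^*)^d$, combining the torus weight structure with unimodularity of $A$.

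\emph{Step 1: describe stabilizers.} A point $\mathbf{p}=(\mathbf{z},\mathbf{w})\in M$ has support $S(\mathbf{p}) := \{i \mid z_i\neq 0 \text{ or } w_i\neq 0\}$. An element $t\in(\mathbb{C}^*)^d$ acts on the $i$-th coordinates by $t^{\pm\mathbf{A}_i}$, so
\[
(G_\mathbb{C})_{\mathbf{p}}=\{t \mid t^{\mathbf{A}_i}=1 \text{ for all } i\in S(\mathbf{p})\}.
\]
This group is the Cartier dual of $\mathbb{Z}^d/\langle \mathbf{A}_i : i\in S(\mathbf{p})\rangle$. Hence it is trivial exactly when the columns $\{\mathbf{A}_i\}_{i\in S(\mathbf{p})}$ generate the lattice $\mathbb{Z}^d$.

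\emph{Step 2: stability gives full rank.} Let $\mathbf{p}\in M^{\beta-s}$. By Definition~\ref{def:b-ss}, $\dim(G_\mathbb{C})_\mathbf{p}=0$, so $\{\mathbf{A}_i\}_{i\in S(\mathbf{p})}$ has rank $d$. Choose a subset $B\subset S(\mathbf{p})$ of $d$ linearly independent columns. By unimodularity, $\det(\mathbf{A}_i)_{i\in B}=\pm1$, so these columns already form a $\mathbb{Z}$-basis of $\mathbb{Z}^d$. By Step 1 the stabilizer is then trivial. Genericity of $\beta$ only guarantees $M^{\beta-ss}=M^{\beta-s}$ (Lemma~\ref{lemma:generic_beta and stability}), which makes the free quotient geometric. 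If one insists on the literal definition, the argument above applies to any $\beta$; we would state it for the generic $\beta$ chosen off the hyperplanes $D'_\mathbb{R}$ of that lemma.

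\emph{Main obstacle.} The only subtle point is the identification $\dim(G_\mathbb{C})_\mathbf{p}=0 \iff \operatorname{rank}\{\mathbf{A}_i\}_{i\in S(\mathbf{p})}=d$, and the check that stabilizers are computed on supports even though $z_i$ and $w_i$ carry opposite weights. Both follow because a coordinate with weight $\pm\mathbf{A}_i$ is fixed by $t$ exactly when $t^{\mathbf{A}_i}=1$.

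We would also remark where unimodularity is used. Without it, a full-rank subset of columns may span a finite-index sublattice, and the quotient then has orbifold points, consistent with Remark~\ref{rem:smoothness of generic beta-GIT quot}.
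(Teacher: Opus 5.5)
Your proof is correct and takes essentially the same approach as the paper. Stability forces the columns $\mathbf{A}_i$ indexed by the support of $\mathbf{p}$ to span $\mathbb{R}^d$, unimodularity upgrades $d$ independent such columns to a $\mathbb{Z}$-basis, and the fixed-point equations $\mathbf{t}^{\mathbf{A}_{i_k}}=1$ then force $\mathbf{t}=1$. Your remark that genericity of $\beta$ plays no role here is also consistent with the paper's proof, which never uses it.
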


\begin{proof}
  Let $\mathbf{p} = (\mathbf{z},\mathbf{w}) \in M^{\beta-s}$ and let $\mathbf{t} = (t_1, \dots, t_d) \in G_\mathbb{C} \cong (\mathbb{C}^*)^d$ be an element fixing $\mathbf{p}$.
  The action of $\mathbf{t}$ on $\mathbf{p}$ is given by
  \begin{equation}
    \mathbf{t} \cdot (\mathbf{z},\mathbf{w}) = (\mathbf{t}^{\mathbf{A}_1} z_1, \dots, \mathbf{t}^{\mathbf{A}_n} z_n, \ \mathbf{t}^{-\mathbf{A}_1} w_1, \dots, \mathbf{t}^{-\mathbf{A}_n} w_n),
  \end{equation}
  where $\mathbf{t}^{\mathbf{A}_i} \coloneqq t_1^{a_{1,i}} \cdots t_d^{a_{d,i}}$ and $a_{j,i}$ is the $(j,i)$-entry of the matrix $A$.\medskip

  Since $\mathbf{p}$ is a stable point, its stabilizer is finite, which implies that the set of column vectors $\mathbf{A}_i$ corresponding to the non-zero components of $\mathbf{p}$ (i.e., $z_i \neq 0$ or $w_i \neq 0$) spans $\mathbb{R}^d$.
  As $A$ is unimodular, we can choose $d$ such column vectors, say $\{\mathbf{A}_{i_1}, \dots, \mathbf{A}_{i_d}\}$, that form a $\mathbb{Z}$-basis of $\mathbb{Z}^d$.
  Since the condition $\mathbf{t} \cdot \mathbf{p} = \mathbf{p}$ implies that $\mathbf{t}^{\mathbf{A}_{i_k}} = 1$ for all $k = 1, \dots, d$, it follows that $\mathbf{t} = (1, \dots, 1) \in G_\mathbb{C}$.
  Thus, the stabilizer is trivial, which proves that $G_\mathbb{C}$ acts freely on the stable locus.
\end{proof}

Lemma~\ref{lemma:free_action_toric} guarantees that for a generic lattice point $\beta \in \mathfrak{z}^*_\mathbb{Z}\simeq\mathbb{Z}^d$, the toric hyperkähler variety $Y_{\beta}=X_{(\beta,0,0)}(A)$ defines a projective crepant resolution of the central quotient $X_{\mathbf{0}}(A)$ (cf.~Corollary~\ref{cor:crepant_resol_Y_beta}).
Furthermore, the surjectivity (and in particular, the fact that it is an isomorphism) of the Kirwan map of $Y_{\beta}$ (condition (3)) is proved by Konno \cite{Konno00}.
Consequently, we obtain the following proposition:
\begin{proposition}
  Consider the toric hyperkähler variety $X_\zeta(A)$ determined by the weight matrix $A$, as in Definition~\ref{def:toric HK mfd}.
  If the weight matrix $A$ is a coloop-free unimodular matrix, then for a generic integer vector (lattice point) $\beta\in \mathfrak{z}^*_\mathbb{Z} \simeq \mathbb{Z}^d$, the toric hyperkähler variety $Y_{\beta}=X_{(\beta,0,0)}(A)$ defines a projective crepant resolution of the central quotient $X_{\mathbf{0}}(A)$. Also, in this case, the Torelli-type theorem (Theorem~\ref{main thm:Torelli-type thm for HK quot}) for algebraic asymptotic hyperkähler structures on $Y_{\beta}$ holds.
\end{proposition}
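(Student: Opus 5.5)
The statement to be proved is the final Proposition on toric hyperkähler varieties. The plan is to verify the three conditions of Assumption~\ref{assump:standing_assumptions} for $M=V\oplus V^*$ with $G=T^d$ acting through $A$, and then invoke Theorem~\ref{main thm:Torelli-type thm for HK quot}.

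\textbf{Conditions (1) and (2).} Condition (1), $\mu_\mathbb{C}^{-1}(\mathbf{0})^s\neq\emptyset$, is exactly Lemma~\ref{lemma:for coloop-free}, which applies because $A$ is coloop-free. For condition (2), I take $\beta\in\mathfrak{z}^*_\mathbb{Z}\simeq\mathbb{Z}^d$ outside the finite union of hyperplanes $D'_\mathbb{R}$ constructed in the proof of Lemma~\ref{lemma:generic_beta and stability}. This gives $M^{\beta-ss}=M^{\beta-s}$. I then intersect this with the Zariski open condition coming from Lemma~\ref{lemma:free_action_toric}. Since $A$ is unimodular, that lemma shows $G_\mathbb{C}$ acts freely on $M^{\beta-s}$.

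One point needs care here. Lemma~\ref{lemma:free_action_toric} uses the fact that a stable point has finite stabilizer, so the columns $\mathbf{A}_i$ supported at $\mathbf{p}$ span $\mathbb{R}^d$. Unimodularity then lets us extract a $\mathbb{Z}$-basis from among them. This is genuinely a property of every $\beta$-stable point, since by Definition~\ref{def:b-ss} stability includes $\dim(G_\mathbb{C})_x=0$. So the genericity needed for (2) is the same as the genericity needed in Lemma~\ref{lemma:generic_beta and stability}.

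\textbf{Projective crepant resolution.} With (1) and (2) in hand, Corollary~\ref{cor:crepant_resol_Y_beta} shows that $Y_\beta=X_{(\beta,0,0)}(A)$ is a projective crepant resolution of $X_{\mathbf{0}}(A)$.

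\textbf{Condition (3).} I will cite Konno's theorem \cite{Konno00} that the cohomology ring of a smooth toric hyperkähler variety is generated by the classes $c_1(L_{\chi})$ of line bundles associated with characters of $T^d$. In degree two this is exactly surjectivity of $\kappa_{\beta,\mathbb{Z}}$, via the description in Remark~\ref{rem:meaning of Kirwan map}.

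This is the step I expect to be the main obstacle. The difficulty is matching conventions: Konno works with the real hyperkähler quotient $\mu^{-1}(\beta,0,0)/T^d$ and its tautological bundles, whereas Definition~\ref{def:Kirwan map} is phrased via $G_\mathbb{C}$-equivariant cohomology of the semistable locus. The identification goes through Theorem~\ref{thm:King}, whose Kempf--Ness homeomorphism is compatible with the principal bundles. Concretely, $\mu^{-1}(\beta,0,0)\to Y_\beta$ is a $T^d$-bundle, and its complexification is $\mu_\mathbb{C}^{-1}(0)^{\beta-ss}\to Y_\beta$. The two constructions therefore produce the same line bundles $L_{\chi}$. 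In addition, Konno's result gives injectivity in degree two once the coloop-free condition holds, so $\kappa_\beta$ is in fact an isomorphism, although only surjectivity is needed.

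\textbf{Conclusion.} All hypotheses of Theorem~\ref{main thm:Torelli-type thm for HK quot} now hold with $\beta_0=\beta$. Hence the period map $p\colon\mathcal{M}\to\Omega$ is bijective for algebraic asymptotic hyperkähler structures on $Y_\beta$, which is the assertion of the Proposition.
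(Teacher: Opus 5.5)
Your proposal is correct and follows the paper's own route: condition (1) comes from Lemma~\ref{lemma:for coloop-free}, and condition (2) from Lemma~\ref{lemma:free_action_toric}, which, as you observe, needs no genericity beyond that of Lemma~\ref{lemma:generic_beta and stability}. The crepant resolution then follows from Corollary~\ref{cor:crepant_resol_Y_beta}, condition (3) from Konno's theorem, and Theorem~\ref{main thm:Torelli-type thm for HK quot} applies; your extra matching of Konno's line bundles with the Kirwan map of Definition~\ref{def:Kirwan map} via the Kempf--Ness identification is a detail the paper leaves implicit.
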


\subsection{Nakajima Quiver Varieties}
As another fruitful example to which the Torelli-type theorem is applicable, we introduce the Nakajima quiver variety $X_\zeta(\mathbf{v},\mathbf{w})$ whose pair of dimension vectors $(\mathbf{v},\mathbf{w})$ forms a strict Schur root.
\medskip

\begin{definition}[\cite{Nakajima94}]\label{def:Nakajima quiver variety}
  Let $Q_0$ be a finite quiver with vertex set $I$ and edge (arrow) set $\Omega$. We assume that $\Omega$ has no oriented cycles.
  For dimension vectors $\mathbf{v} = (v_k)_{k \in I}$ and $\mathbf{w} = (w_k)_{k \in I}$ in $\mathbb{Z}_{\ge 0}^I$, we set complex vector spaces $V_k = \mathbb{C}^{v_k}$ and $W_k = \mathbb{C}^{w_k}$.
  We define the $W$-framed representation space $\mathbf{V}=\operatorname{Rep}(Q_0;\mathbf{v},\mathbf{w})$ of $Q_0$ by
  \begin{equation}
    \mathbf{V} \coloneqq \bigoplus_{h \in \Omega} \operatorname{Hom}(V_{\operatorname{out}(h)}, V_{\operatorname{in}(h)}) \oplus \bigoplus_{k \in I} \operatorname{Hom}(W_k, V_k).
  \end{equation}
  Consider the hyperkähler vector space $M \coloneqq \mathbf{V} \oplus \mathbf{V}^*$.
  The compact Lie group $G \coloneqq \prod_{k \in I} U(v_k)$ naturally acts on $M$, which determines a hyperkähler moment map $\mu_{(\mathbf{v},\mathbf{w})} = (\mu_\mathbb{R}, \mu_\mathbb{C}) \colon M \to \mathfrak{g}^* \otimes \mathbb{R}^3$.
  For any parameter $\zeta \in \mathfrak{z}^* \otimes \mathbb{R}^3$, the hyperkähler quotient $X_\zeta(\mathbf{v},\mathbf{w}) \coloneqq \mu_{(\mathbf{v},\mathbf{w})}^{-1}(\zeta)/G$ is called the \textit{Nakajima quiver variety} (cf.~{\cite{Nakajima94}}).
\end{definition}

To treat the framing vector $\mathbf{w}$ algebraically, we introduce the \textit{framed quiver} $Q$ of $Q_0$ by adding an extra vertex $\infty$ to $I$ and drawing $w_k$ arrows from $\infty$ to each vertex $k \in I$ (cf.~\cite{CrawleyBoevey01}).
Then, the representation space $\mathbf{V}$ canonically corresponds to the representations of $Q$ with the dimension vector $\alpha = (\mathbf{v}, 1) \in \mathbb{Z}_{\ge 0}^{I \cup \{\infty\}}$.
Consequently, $M$ can be regarded as the representation space of the double quiver $Q^\sharp$ of $Q$ (cf.~\cite[Def.~5.1]{kirillov16quiver}).
We prepare the following definition:
\begin{definition}[cf.~\cite{CrawleyBoevey01}]
  We say that a pair of dimension vectors $(\mathbf{v},\mathbf{w})$ forms a \textit{strict Schur root} if the corresponding dimension vector $\alpha \coloneqq (\mathbf{v}, 1)$ is a strict Schur root of the Kac--Moody Lie algebra $\mathfrak{g}(Q)$ associated with the framed quiver $Q$. Namely, it satisfies the following two conditions:
  \begin{enumerate}
    \item $\alpha$ is a positive root of $\mathfrak{g}(Q)$.
    \item For any non-trivial decomposition $\alpha = \beta_1 +\cdots + \beta_k~(k\geq 2)$ into positive roots of $\mathfrak{g}(Q)$, it satisfies $p(\alpha) > \sum_{i=1}^k p(\beta_i)$. Here, $p(\alpha) \coloneqq 1-\langle\alpha,\alpha\rangle ~\geq 0$, where $\langle -, - \rangle$ is the Euler form of $Q$ (cf.~\cite[\S~1.5]{kirillov16quiver}).
  \end{enumerate}
\end{definition}

\begin{remark}
  In this case, the dimension of the Nakajima quiver variety $X_\zeta(\mathbf{v},\mathbf{w})$ is real $4p(\alpha)>0$.
\end{remark}

Any point $\mathbf{p}=(B,i,j) \in M$ determines a representation $V_{\mathbf{p}}$ of the double quiver $Q^\sharp$ with dimension vector $\alpha=(\mathbf{v},1)$.
From this perspective, it is well known that the $\beta$-stability of a point $\mathbf{p}=(B,i,j)$ can be restated in terms of the representation $V_{\mathbf{p}}$ as follows.

\begin{lemma}[cf.~{\cite[Thm.~10.32]{kirillov16quiver}}]\label{lemma:stability for quiver representation} 
  For any lattice point $\beta \in \mathfrak{z}^*_\mathbb{Z} \simeq \mathbb{Z}^I$, the following two conditions are equivalent:
  \begin{enumerate}
    \item The point $\mathbf{p}=(B,i,j)\in M$ is $\beta$-stable.
    \item The representation $V_{\mathbf{p}}$ of $Q^\sharp$ is \textit{$\beta$-stable} (cf.~\cite[Def.~10.31]{kirillov16quiver}).
    That is, for any $B$-invariant proper non-zero subspace $V' \subsetneq V:=\displaystyle\bigoplus_{k\in I}V_k$,
    \begin{align}
      V' \subset \ker(j) &\implies \beta \cdot \mathbf{v}' < 0, \\
      \operatorname{im}(i) \subset V' &\implies \beta \cdot \mathbf{v}' < \beta \cdot \mathbf{v}.
    \end{align}
    Here, $\mathbf{v}' \coloneqq (\dim V'_k)_{k\in I} \in \mathbb{Z}^I$ denotes the dimension vector of $V'$, and $\mathbf{v}$ is the dimension vector of $V$.
  \end{enumerate}
\end{lemma}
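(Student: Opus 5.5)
The plan is to prove the equivalence by comparing King's characterization of $\beta$-stability (Definition~\ref{def:b-ss}, which relies on semi-invariants) with the Hilbert--Mumford criterion (Proposition~\ref{prop:Hilbert-Munford criterion}), and then translating one-parameter subgroups of $G_\mathbb{C}=\prod_{k\in I}GL(v_k)$ into filtrations of $V=\bigoplus_k V_k$ by $B$-invariant subspaces. This is the standard route of King \cite[Prop.~3.1]{King94} for representations of quivers. The framing is absorbed by passing to the framed double quiver $Q^\sharp$ with dimension vector $\alpha=(\mathbf{v},1)$. Here the group $\prod_{k\in I\cup\{\infty\}}GL(\alpha_k)$ modulo its diagonal $\mathbb{C}^*$ is identified with $G_\mathbb{C}$ by normalizing the $\infty$-component to $1$. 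Under this identification the character $\beta$ extends to $\theta=(\beta,-\beta\cdot\mathbf{v})$ with $\theta\cdot\alpha=0$; the sign convention is fixed so that it matches $\chi$ in Definition~\ref{def:character lattice}.

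First, I would show (1)$\Rightarrow$(2). Given a $B$-invariant proper non-zero subspace $V'$, I would choose a complement $V''$ graded by vertices. Consider the one-parameter subgroup $\lambda(t)$ that acts by $1$ on $V'$ and by $t$ on $V''$ (or its inverse, depending on the case). In the case $V'\subset\ker j$, I would take the subgroup under which the limit $\lim_{t\to0}\lambda(t)\cdot\mathbf{p}$ exists. The reasons are that $B$ preserves $V'$, so the off-diagonal block of $B$ from $V''$ to $V'$ scales with a non-negative weight, and that $j$ vanishes on $V'$. Similarly, in the case $\operatorname{im}(i)\subset V'$, the component $i$ lands in the weight-$0$ part. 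Since the limit exists, the Hilbert--Mumford type sign condition $\langle\chi,\lambda\rangle\ge0$ holds (the semistability half of King's criterion \cite[Prop.~2.5]{King94}), with strict inequality for stability. Because $\langle\chi,\lambda\rangle$ is a linear combination of $\beta\cdot\mathbf{v}'$ and $\beta\cdot\mathbf{v}$, this gives exactly the two displayed inequalities. Here I would use the full form of King's criterion (semistability iff $\langle\chi,\lambda\rangle\ge0$ for every $\lambda$ with a limit, stability iff strict inequality for non-trivial $\lambda$ modulo the stabilizer), not only the equivalence stated in Proposition~\ref{prop:Hilbert-Munford criterion}.

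For the converse (2)$\Rightarrow$(1), take any non-trivial $\lambda$ for which the limit exists. It decomposes $V$ into weight spaces, and the subspaces $V_{\ge m}=\bigoplus_{w\ge m}V^{(w)}$ form a $B$-invariant filtration. Each step of the filtration satisfies either $\subset\ker j$ or $\supset\operatorname{im}(i)$, according to the sign of $m$ relative to the weight on $W$, which is normalized to $0$. Writing $\langle\chi,\lambda\rangle$ as a sum over the jumps of the filtration, each term is one of the expressions controlled by (2), so $\langle\chi,\lambda\rangle\neq0$ with the correct sign. Proposition~\ref{prop:Hilbert-Munford criterion}, together with semistability obtained by the same argument with non-strict inequalities, then yields $\beta$-stability.

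The main obstacle will be the bookkeeping of signs: relating King's convention $\theta(\alpha)=0$ and $\theta(\text{sub})>0$ to the convention here, and handling the $\infty$ vertex so that subrepresentations containing the framing vertex correspond to the condition $\operatorname{im}(i)\subset V'$. Because the $GL(1)$ at $\infty$ is quotiented out, a subrepresentation of $Q^\sharp$ either contains $\mathbb{C}_\infty$, which forces $\operatorname{im}(i)\subset V'$ and yields the second inequality after subtracting from $\alpha$, or does not contain it, which forces $V'\subset\ker j$ and yields the first. I would verify this dictionary carefully and then cite \cite[Thm.~10.32]{kirillov16quiver} for the remaining identification.
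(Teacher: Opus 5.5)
The paper offers no argument beyond the citation to Kirillov. Your route is the standard one behind that result: King's Hilbert--Mumford criterion, with one-parameter subgroups turned into $B$-invariant filtrations of $V$ and the framing vertex pinned at weight $0$.

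The gap is in (2)$\Rightarrow$(1), where you claim every summand of $\langle\chi,\lambda\rangle$ is controlled by (2). In the sign convention of the statement,
\[
\langle\chi,\lambda\rangle=\sum_{m\ge 1}\bigl(-\beta\cdot\mathbf{v}_{\ge m}\bigr)+\sum_{m\le 0}\bigl(\beta\cdot\mathbf{v}-\beta\cdot\mathbf{v}_{\ge m}\bigr),\qquad V_{\ge 1}\subseteq\ker j,\quad \operatorname{im}(i)\subseteq V_{\ge 0}.
\]
The steps can include $V_{\ge m}=V$ with $m\ge 1$ and $V_{\ge m}=0$ with $m\le 0$. For example, $\lambda(t)=t\,\mathrm{id}_V$ has a limit exactly when $j=0$ and contributes $-\beta\cdot\mathbf{v}$. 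Likewise $\lambda(t)=t^{-1}\mathrm{id}_V$ has a limit exactly when $i=0$ and contributes $\beta\cdot\mathbf{v}$. Condition (2) only quantifies over \emph{proper non-zero} $V'$, so it says nothing about these terms.

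In fact (2)$\Rightarrow$(1) is false as literally stated. Take $Q_0$ a single vertex without arrows and $\mathbf{v}=\mathbf{w}=1$, i.e.\ $M=\mathbb{C}^2\ni(i,j)$ with $t\cdot(i,j)=(ti,t^{-1}j)$. The space $V=\mathbb{C}$ has no proper non-zero subspaces, so (2) holds vacuously for every $\mathbf{p}$ and $\beta$. Yet $\mathbf{p}=(0,0)$ has stabilizer $\mathbb{C}^*$ and is never $\beta$-stable, and $(1,0)$ is $\beta$-stable for only one sign of $\beta$.

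Your own dictionary in the last paragraph gives the correct condition. The proper non-zero subrepresentations of $V_{\mathbf p}$ are $(V',0)$ with $0\neq V'\subseteq\ker j$, \emph{including} $V'=V$, and $(V',\mathbb{C}_\infty)$ with $\operatorname{im}(i)\subseteq V'\subsetneq V$, \emph{including} $V'=0$. What you can actually prove is the equivalence with (2) amended to require only $V'\neq 0$ in the first implication and only $V'\neq V$ in the second; this is also what the Nakajima/Kirillov definition says. With that amendment your argument closes. A non-trivial $\lambda$ has a non-zero weight on $V$, so the $m=1$ or $m=0$ step is non-trivial and contributes strictly positively, while all other terms are $\ge 0$. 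The paper itself needs the amended form: in the proof of Lemma~\ref{lemma:free_action_quiver}, $\operatorname{im}(\phi)$ may be all of $V$ and $\ker(\phi)$ may be $0$.

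There are also two smaller bookkeeping slips.
\begin{enumerate}
\item With $W$ pinned at weight $0$ you cannot shift $\lambda$ by a central scalar. So for $V'\subseteq\ker j$ in (1)$\Rightarrow$(2) you need weight $1$ on $V'$ and $0$ on $V''$. ``$1$ on $V'$, $t$ on $V''$'' fails, because the block of $B$ from $V''$ to $V'$ then scales by $t^{-1}$. Its inverse also fails, because it needs $\operatorname{im}(i)\subseteq V'$; the inverse is, however, the right choice for the second case.
\item To get the displayed inequalities from King's convention $\theta(\mathrm{sub})>0$, the extension must be $\theta=(-\beta,\beta\cdot\mathbf{v})$, not $(\beta,-\beta\cdot\mathbf{v})$.
\end{enumerate}
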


From this restatement and the result of \cite{CrawleyBoevey01}, the following lemma on the condition (1) immediately follows:

\begin{lemma}[Crawley-Boevey {\cite[Theorem~1.2]{CrawleyBoevey01}}]\label{lemma:nonempty_stable_quiver}
  Assume that the pair of dimension vectors $(\mathbf{v},\mathbf{w})$ forms a strict Schur root.
  Then, the stable locus $\mu^{-1}_\mathbb{C}(0)^s$ is non-empty.
\end{lemma}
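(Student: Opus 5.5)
The plan is to deduce the statement from Crawley-Boevey's theorem on the image of the moment map for deformed preprojective algebras \cite[Theorem~1.2]{CrawleyBoevey01}, combined with the stability dictionary in Lemma~\ref{lemma:stability for quiver representation}. First, identify $M=\mathbf{V}\oplus\mathbf{V}^*$ with $\operatorname{Rep}(Q^\sharp,\alpha)$ for $\alpha=(\mathbf{v},1)$. Identify $\mu_\mathbb{C}^{-1}(0)$ with the representations of the preprojective algebra $\Pi^0(Q)$ of dimension $\alpha$. Here the group $\prod_{k\in I}GL(v_k)$ corresponds to $GL(\alpha)/\mathbb{C}^*$, where the scalar $\mathbb{C}^*$ acts trivially and the $GL_1$ at $\infty$ is absorbed. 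Under this identification, a point $\mathbf{p}$ is stable for $\beta=0$, i.e.\ lies in $\mu_\mathbb{C}^{-1}(0)^s$, precisely when its $G_\mathbb{C}$-orbit is closed and its stabilizer is finite. By the King-type criterion (Proposition~\ref{prop:Hilbert-Munford criterion} with $\chi$ trivial, or equivalently Lemma~\ref{lemma:stability for quiver representation} at $\beta=0$), this is equivalent to $V_{\mathbf{p}}$ being a \emph{simple} $\Pi^0(Q)$-module. Indeed, with $\beta=0$ both inequalities fail for every proper nonzero invariant $V'$, so stability forces the absence of any proper nonzero subrepresentation of the full framed representation.

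The second step invokes Crawley-Boevey's theorem. It states that there is a simple $\Pi^\lambda$-module of dimension $\alpha$ if and only if $\alpha\in\Sigma_\lambda$. The set $\Sigma_\lambda$ consists of the positive roots $\alpha$ with $\lambda\cdot\alpha=0$ and $p(\alpha)>\sum p(\beta_i)$ for every nontrivial decomposition of $\alpha$ into positive roots $\beta_i$ with $\lambda\cdot\beta_i=0$. At $\lambda=0$ the constraint $\lambda\cdot\beta_i=0$ is vacuous. Hence $\Sigma_0$ is exactly the set of strict Schur roots in the sense of the preceding definition, and the hypothesis gives a simple $\Pi^0(Q)$-module $V_{\mathbf{p}}$ of dimension $(\mathbf{v},1)$. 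The corresponding point $\mathbf{p}$ lies in $\mu_\mathbb{C}^{-1}(0)$ because the preprojective relation at vertices in $I$ is $\mu_\mathbb{C}=0$. The relation at $\infty$ is automatic by the trace identity, since it is minus the sum of traces of the others.

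The remaining point, and the main obstacle, is matching conventions. The framed moment map involves $i,j$ and only the vertices of $I$. Stability with respect to $G=\prod U(v_k)$ instead of $GL(\alpha)$ must coincide with simplicity of the framed module, including subrepresentations that contain or omit the $\infty$ component. I would check that a proper subrepresentation with $\infty$-component $0$ is exactly a $B$-invariant $V'\subset\ker j$. Likewise, one with $\infty$-component $\mathbb{C}$ is exactly an invariant $V'\supset\operatorname{im} i$, with the cases $V'=0$ or $V'=V$ included. With $\beta=0$ the strict inequalities in Lemma~\ref{lemma:stability for quiver representation} exclude both, so stability is equivalent to simplicity. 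Finally, stability also requires finite stabilizers. For a simple module, Schur's lemma gives stabilizer in $GL(\alpha)$ equal to scalars, whose image in $G_\mathbb{C}$ is trivial. This confirms $\mathbf{p}\in\mu_\mathbb{C}^{-1}(0)^s$, so the stable locus is non-empty.
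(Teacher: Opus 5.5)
Your proposal is correct and follows the paper's own argument. Crawley-Boevey's Theorem~1.2 applies with $\Sigma_0$ equal to the strict Schur roots, so it yields a simple $\Pi^0(Q)$-module of dimension $(\mathbf{v},1)$, that is, a point of $\mu_\mathbb{C}^{-1}(0)$ whose framed representation is simple; Lemma~\ref{lemma:stability for quiver representation} at $\beta=0$ then turns simplicity into stability. Your additional checks are details the paper leaves implicit: the relation at $\infty$ holding automatically by the trace identity, the edge cases $V'=0$ and $V'=V$, and the trivial stabilizer via Schur's lemma.
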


\begin{proof}
  By \cite[Theorem~1.2]{CrawleyBoevey01}, the condition that the dimension vector $\alpha = (\mathbf{v},1)$ is a strict Schur root of $\mathfrak{g}(Q)$ ensures the existence of a simple representation of the associated preprojective algebra $\Pi^0(Q)$.
  This is equivalent to the existence of a point $\mathbf{p} \in \mu_\mathbb{C}^{-1}(\mathbf{0})$ such that the corresponding representation $V_{\mathbf{p}}$ of $Q^\sharp$ is a simple representation.
  Therefore, by Lemma~\ref{lemma:stability for quiver representation}, this point $\mathbf{p}$ is a stable point (since a simple representation has no proper subrepresentations), which proves $\mu_\mathbb{C}^{-1}(\mathbf{0})^s \neq \emptyset$.
\end{proof}

Using the restatement in Lemma~\ref{lemma:stability for quiver representation},
we prove the following lemma on the condition (2):

\begin{lemma}\label{lemma:free_action_quiver}
  For a generic lattice point $\beta \in \mathfrak{z}^*_\mathbb{Z}\simeq\mathbb{Z}^I$, the Lie group $G_\mathbb{C} = \prod_{k \in I} GL(V_k)$ acts freely on the stable locus $M^{\beta-s}$.
\end{lemma}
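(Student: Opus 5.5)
The proof follows the same pattern as Lemma~\ref{lemma:free_action_toric}: show the stabilizer of a $\beta$-stable point is trivial, now using the quiver-theoretic description of stability in Lemma~\ref{lemma:stability for quiver representation}. The key structural fact is that the framing vertex $\infty$ has dimension $1$. A framed representation of dimension vector $\alpha=(\mathbf{v},1)$ has automorphism group $\prod_k GL(V_k)\times \mathbb{C}^*$, and $G_\mathbb{C}=\prod_k GL(V_k)$ is the subgroup acting trivially on the one-dimensional space at $\infty$. So if $g\in G_\mathbb{C}$ fixes $\mathbf{p}=(B,i,j)$, then $(g,1)$ is an automorphism of the representation $V_{\mathbf{p}}$ of the double framed quiver $Q^\sharp$.

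First I would pass from GIT-stability to King-stability of $V_{\mathbf{p}}$ as a representation of $Q^\sharp$. Using the extended weight $\theta=(\beta,-\beta\cdot\mathbf{v})$, which satisfies $\theta\cdot\alpha=0$, the two inequalities in Lemma~\ref{lemma:stability for quiver representation} become exactly King's condition $\theta\cdot\mathbf{v}'<0$ for every proper nonzero subrepresentation. Subrepresentations with $\infty$-component $0$ are $B$-invariant subspaces contained in $\ker j$. Those with $\infty$-component $\mathbb{C}$ contain $\operatorname{im}(i)$. I use the sign convention of the lemma; any sign discrepancy is harmless.

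Second, I would apply the standard Schur-lemma argument to King-stable representations. Such a representation is a stable object of slope zero in an abelian category, so every nonzero endomorphism is an isomorphism and its endomorphism ring is a finite-dimensional division algebra over $\mathbb{C}$, hence $\mathbb{C}$. Concretely, for $\phi=(g,1)-c\cdot\mathrm{id}$ with $c$ an eigenvalue, both $\ker\phi$ and $\operatorname{im}\phi$ are subrepresentations. The stability inequalities applied to $\ker\phi$ and to $\operatorname{im}\phi$, which is a quotient of the representation, force $\phi=0$ whenever $\ker\phi\neq 0$. Therefore $(g,1)=c\cdot\mathrm{id}$. Looking at the $\infty$-component gives $c=1$, so $g=\mathrm{id}$ and the action is free.

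Third, I would account for genericity of $\beta$. The argument above uses only stability, not semistability, so freeness holds on $M^{\beta-s}$ for every $\beta$. Genericity enters only so that $M^{\beta-ss}=M^{\beta-s}$ by Lemma~\ref{lemma:generic_beta and stability}, which is what Assumption~\ref{assump:standing_assumptions}~(2) actually needs.

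The main obstacle I expect is the first step. Here $V'$ ranges over $B$-invariant subspaces where $B$ includes the dual arrows of the double quiver. I need to check carefully that kernels and images of an endomorphism commuting with $(B,i,j)$ produce subspaces of precisely the two types appearing in Lemma~\ref{lemma:stability for quiver representation}: contained in $\ker j$, or containing $\operatorname{im} i$. For a kernel, $\infty$-component $0$ gives containment in $\ker j$, and $\infty$-component $\mathbb{C}$ gives $\operatorname{im} i$ inside it. For an image, the $\infty$-component is either $0$ or everything, so the same dichotomy holds. I also need to track the sign conventions so that both the kernel and the image inequalities yield a contradiction.
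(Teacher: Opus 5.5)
Your proposal is correct and is essentially the paper's argument. The paper sets $\phi=g-\mathrm{id}_V$ directly; this is your $\phi$ with $c=1$, which is forced from the start because the line at $\infty$ lies in $\ker((g,1)-\mathrm{id})$. It then notes $\operatorname{im}(i)\subset\ker\phi$ and $\operatorname{im}\phi\subset\ker j$, and adds the two stability inequalities to contradict rank--nullity, which is exactly your Schur-lemma step.

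Your King $\theta$-stability version with $\theta=(\beta,-\beta\cdot\mathbf{v})$ also covers the case $\phi$ invertible ($\operatorname{im}\phi=V$, $\ker\phi=0$), which the ``proper non-zero'' wording of Lemma~\ref{lemma:stability for quiver representation} does not literally cover. You are also right that genericity of $\beta$ plays no role in the freeness argument.
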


\begin{proof}
  Let $\mathbf{p} = (B, i, j) \in M^{\beta-s}$ be a stable point, and let $g = (g_k)_{k \in I} \in G_\mathbb{C}$ be an element fixing $\mathbf{p}$.
  We consider the linear map $\phi \coloneqq g - \operatorname{id}_V \colon V \to V$, defined component-wise by $\phi_k = g_k - \operatorname{id}_{V_k}$.
  The condition $g \cdot \mathbf{p} = \mathbf{p}$ means $g_{\operatorname{in}(h)} B_h g_{\operatorname{out}(h)}^{-1}= B_h $, $g_k i_k = i_k$, and $j_k g_k = j_k$.
  By subtracting the identity, we obtain
  \begin{equation}
    \phi_{\operatorname{in}(h)} B_h = B_h \phi_{\operatorname{out}(h)}, \quad \phi_k i_k = 0, \quad j_k \phi_k = 0.
  \end{equation}
  The first equation shows that $\ker(\phi)$ and $\operatorname{im}(\phi)$ are $B$-invariant subspaces of $V$.
  The second and third equations imply $\operatorname{im}(i) \subset \ker(\phi)$ and $\operatorname{im}(\phi) \subset \ker(j)$, respectively.\medskip

  Assume for contradiction that $g \neq \operatorname{id}_V$, which means $\phi \neq 0$.
  Then $\operatorname{im}(\phi)$ is a non-zero subspace, and $\ker(\phi)$ is a proper subspace ($\ker(\phi) \subsetneq V$).
  By applying Lemma~\ref{lemma:stability for quiver representation} to these $B$-invariant subspaces, we obtain two inequalities:
  \begin{align}
    0 \neq \operatorname{im}(\phi) \subset \ker(j) &\implies \beta \cdot \mathbf{v}_{\operatorname{im}(\phi)} < 0, \\
    \operatorname{im}(i) \subset \ker(\phi) \subsetneq V &\implies \beta \cdot \mathbf{v}_{\ker(\phi)} < \beta \cdot \mathbf{v}.
  \end{align}
  Here, $\mathbf{v}_{\ker(\phi)}$ and $\mathbf{v}_{\operatorname{im}(\phi)}$ denote the dimension vectors of $\ker(\phi)$ and $\operatorname{im}(\phi)$, respectively.\medskip

  By the rank-nullity theorem, we have $\dim V_k = \dim \ker(\phi_k) + \dim \operatorname{im}(\phi_k)$ for each $k \in I$, which yields the following equality:
  \begin{equation}
    \beta \cdot \mathbf{v} = \beta \cdot \mathbf{v}_{\ker(\phi)} + \beta \cdot \mathbf{v}_{\operatorname{im}(\phi)}.
  \end{equation}
  However, summing the two inequalities above yields $\beta \cdot \mathbf{v}_{\ker(\phi)} + \beta \cdot \mathbf{v}_{\operatorname{im}(\phi)} < \beta \cdot \mathbf{v}$, which contradicts the equality.
  Therefore, we must have $\phi = 0$, meaning $g = \operatorname{id}_V$. This proves that the $G_\mathbb{C}$-action on $M^{\beta-s}$ is free.
\end{proof}
Lemma~\ref{lemma:free_action_quiver} guarantees that for a generic lattice point $\beta \in \mathfrak{z}^*_\mathbb{Z}\simeq\mathbb{Z}^I$, the Nakajima quiver variety $Y_{\beta}=X_{(\beta,0,0)}(\mathbf{v},\mathbf{w})$ defines a projective crepant resolution of the central quotient $X_{\mathbf{0}}(\mathbf{v},\mathbf{w})$ (cf.~Corollary~\ref{cor:crepant_resol_Y_beta}).
Furthermore, the surjectivity of the Kirwan map of $Y_{\beta}$ (condition (3)) is proved by McGerty--Nevins \cite{McGerty-Nevins18}.
Consequently, we obtain the following proposition.
\begin{proposition}\label{prop:Torelli for Nakajima quiver}
  Let $Q_0$ be a finite quiver without oriented cycles, and let $X_\zeta(\mathbf{v},\mathbf{w})$ be the associated Nakajima quiver variety.
  Assume that the pair of dimension vectors $(\mathbf{v},\mathbf{w})$ forms a strict Schur root for the framed quiver $Q$ of $Q_0$.
  Then, for a generic lattice point $\beta \in \mathfrak{z}^*_\mathbb{Z}\simeq\mathbb{Z}^I$, the Nakajima quiver variety $Y_{\beta}=X_{(\beta,0,0)}(\mathbf{v},\mathbf{w})$ defines a projective crepant resolution of the central quotient $X_{\mathbf{0}}(\mathbf{v},\mathbf{w})$.
  Also, in this case, the Torelli-type theorem (Theorem~\ref{main thm:Torelli-type thm for HK quot}) for algebraic asymptotic hyperkähler structures on $Y_{\beta}$ holds.
\end{proposition}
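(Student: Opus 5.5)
The plan is to reduce Proposition~\ref{prop:Torelli for Nakajima quiver} to Theorem~\ref{main thm:Torelli-type thm for HK quot}. That means checking, for the quiver setup, the three conditions of Assumption~\ref{assump:standing_assumptions}, and then reading off both conclusions: the crepant resolution and the bijectivity of the period map.

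First, observe that $G=\prod_{k\in I}U(v_k)$ acts on $M=\mathbf{V}\oplus\mathbf{V}^*$ through a subgroup of $Sp(n)$, since $M\simeq\mathbb{H}^n$ with $\mathbf{V}^*$ carrying the $J$-conjugate representation. So we are in Setup~\ref{setup:HK quot}, and the quiver moment map is the standard one $\mu_{(\mathbf{v},\mathbf{w})}$. Moreover, $\mathfrak{z}^*$ is spanned by the traces at the vertices, and $\mathfrak{z}^*_\mathbb{Z}\simeq\mathbb{Z}^I$ via the characters $\prod_k\det(g_k)^{\beta_k}$.

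The three conditions are then verified as follows.
\begin{enumerate}
\item Condition~(1) is exactly Lemma~\ref{lemma:nonempty_stable_quiver}, which applies because $(\mathbf{v},\mathbf{w})$ is a strict Schur root.
\item For condition~(2), Lemma~\ref{lemma:generic_beta and stability} gives that a generic $\beta$ satisfies $M^{\beta-ss}=M^{\beta-s}$. Lemma~\ref{lemma:free_action_quiver} shows that $G_\mathbb{C}$ acts freely on $M^{\beta-s}$. We pick $\beta$ generic in both senses; this is possible because each condition excludes only finitely many hyperplanes.
\item Condition~(3), the surjectivity of the Kirwan map $\kappa_\beta\colon\mathfrak{z}^*_\mathbb{C}\to H^2(Y_\beta;\mathbb{C})$, is quoted from McGerty--Nevins~\cite{McGerty-Nevins18}. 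Their theorem gives surjectivity of the full Kirwan map $H^*_{G}(\mathrm{pt})\to H^*(Y_\beta)$. Restricting to degree $2$ and using $H^2_G(\mathrm{pt})\simeq H^2_{G}(M)\simeq\mathfrak{z}^*$ identifies it with the map of Definition~\ref{def:Kirwan map}. Their framework is $\mathbb{Q}$- or $\mathbb{C}$-coefficients for smooth $\beta$-quotients of framed doubled quivers, so the setting matches.
\end{enumerate}

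With the three assumptions in hand, Corollary~\ref{cor:crepant_resol_Y_beta} gives that $Y_\beta=X_{(\beta,0,0)}(\mathbf{v},\mathbf{w})$ is a projective crepant resolution of $X_{\mathbf{0}}(\mathbf{v},\mathbf{w})$. Theorem~\ref{main thm:Torelli-type thm for HK quot}, applied with $\beta_0=\beta$, gives the bijectivity of $p\colon\mathcal{M}\to\Omega$.

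The main obstacle I expect is the precise matching in condition~(3). McGerty--Nevins state surjectivity for the hyperkähler (or GIT) quotient at a generic stability parameter, possibly requiring freeness or smoothness hypotheses and working with the full group $\prod GL(v_k)$. I would first check that their genericity condition is compatible with ours; since both are complements of finitely many hyperplanes, one can intersect them. I would also check that their map, restricted to degree $2$, coincides with the first Chern class description of Remark~\ref{rem:meaning of Kirwan map}, namely $\beta'\mapsto c_1(L_{\chi'})$ of the tautological determinant line bundles. Both are pullbacks along the classifying map of the principal $G_\mathbb{C}$-bundle $\mu_\mathbb{C}^{-1}(0)^{\beta-s}\to Y_\beta$, so this identification should be formal.
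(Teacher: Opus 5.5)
Your proposal is correct and follows the paper's route. The paper gets the three conditions of Assumption~\ref{assump:standing_assumptions} from Lemma~\ref{lemma:nonempty_stable_quiver}, Lemma~\ref{lemma:free_action_quiver} and McGerty--Nevins~\cite{McGerty-Nevins18}, then applies Corollary~\ref{cor:crepant_resol_Y_beta} and Theorem~\ref{main thm:Torelli-type thm for HK quot}; your extra remarks (restricting the full Kirwan map to degree $2$, and intersecting the finitely many genericity conditions on $\beta$) only make explicit what the paper leaves implicit.
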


We give a typical example of Nakajima quiver varieties whose pair of dimension vectors forms a strict Schur root.

\begin{example}[Moduli of instantons on ALE spaces]
  Let $Q_0$ be an extended Dynkin quiver of type $\tilde{A}, \tilde{D}$, or $\tilde{E}$, and let $\delta$ be its minimal positive imaginary root (cf.~\cite{kirillov16quiver}).
  We consider a framed quiver $Q$ obtained by setting the framing dimension vector $\mathbf{w} = (r, 0, \dots, 0)$ (i.e., adding a framing of rank $r$ only to the extending vertex).
  For a dimension vector $\mathbf{v} = k\delta$ with $k \ge 1$, the extended dimension vector is given by $\alpha = (k\delta, 1)$.
  By a straightforward calculation of the Euler form, one can verify that if $r \ge 2$, then the vector $\alpha = (k\delta, 1)$ is always a strict Schur root of $\mathfrak{g}(Q)$. (On the other hand, if $r=1$, then it is a Schur root but not strict, since $p(k\delta, 1) = p((k-1)\delta, 1) + p(\delta,0)$ holds).\medskip

  In this case, for the generic lattice point $\beta$, the Nakajima quiver variety $Y_{\beta}$ is a smooth hyperkähler manifold, which corresponds to the moduli space of framed instantons (or torsion-free sheaves) of rank $r$ and charge $k$ on the associated ALE space (cf.~\cite{Nakajima94}).
\end{example}

\subsection{ALE Gravitational Instantons}\label{subsec:ALE-grav inst}
As another application, we recall the hyperkähler quotient construction of ALE gravitational instantons due to Kronheimer \cite{Kronheimer89_HK_quot}.
\medskip

For an extended Dynkin quiver $Q_0$ of type $\tilde{A}$, $\tilde{D}$, or $\tilde{E}$, consider the (non-framed) quiver representation space $\mathbf{V} \coloneqq \operatorname{Rep}(Q_0;\delta,\mathbf{0})$ (cf.~Definition~\ref{def:Nakajima quiver variety}) whose dimension vector is the minimal positive imaginary root $\delta$ (cf.~{\cite{kirillov16quiver}}).
In this case, the subgroup $T \coloneqq S^1 \subset G_0 \coloneqq \prod_{k \in I} U(\delta_k)$ consisting of scalar multiplication acts trivially on $M \coloneqq \mathbf{V} \oplus \mathbf{V}^*$.
Therefore, we consider the effective action of the quotient Lie group $G \coloneqq G_0/T$ on the representation space $M$, and the induced hyperkähler moment map $\mu \colon M \to \mathfrak{g}^* \otimes \mathbb{R}^3$.
Then, the hyperkähler quotient corresponds to the ALE gravitational instanton constructed by Kronheimer \cite{Kronheimer89_HK_quot}.
From the results of \cite{Kronheimer89_HK_quot}, one can readily verify that $\mu$ satisfies Assumption~\ref{assump:standing_assumptions} (in particular, the Kirwan map is an isomorphism).
Consequently, by applying Theorem~\ref{main thm:Torelli-type thm for HK quot} to the projective ALE space $Y_{\beta}$ for a generic lattice point $\beta \in \mathfrak{z}^*_\mathbb{Z}$, one obtains an algebro-geometric proof of the Torelli-type theorem for ALE spaces, which was originally proved by Kronheimer \cite{Kronheimer89_HK_quot, Kronheimer89_Torelli-type} via analytic methods.

%% file: source/Future_Directions.tex
\section{Future Directions}\label{sec:future directions}

In this paper, we proved the Torelli-type theorem (Theorem~\ref{main thm:Torelli-type thm for HK quot}) under Assumption~\ref{assump:standing_assumptions} in the case where a compact Lie group $G \subset \mathrm{Sp}(n)$ acts linearly on the vector space $\mathbb{H}^n$.
The proof of this theorem relies on the algebraic description of hyperkähler quotients as GIT quotients, using the Kempf--Ness type theorem for affine varieties \cite{King94}.
Therefore, it is natural to consider the following problem:

\begin{problem}
  Consider a class of hyperkähler quotients determined by a hyperkähler moment map $\mu \colon M \to \mathfrak{g}^* \otimes \mathbb{R}^3$, where $M$ is not necessarily the vector space $\mathbb{H}^n$, for which Assumption~\ref{assump:standing_assumptions} holds.
  In such cases, establish a Kempf--Ness type isomorphism and prove the surjectivity of the period map $p$.
\end{problem}

The injectivity of the period map $p$ can be proved without relying on the structure of hyperkähler quotients (cf.~Proposition~\ref{prop:inj of period map}).
From this perspective, we pose the following more general problem:

\begin{problem}
  Prove the surjectivity of the period map $p$ given any projective crepant resolution $Y$ of a general conical symplectic variety $X$, which does not necessarily admit the structure of a hyperkähler quotient.
  One possible approach is to show that the image $p(\mathcal{M}) \subset \Omega$ of the period map forms a non-empty open and closed subset of $\Omega$.
\end{problem}

\begin{remark}
  It is known that when the conical symplectic variety $X$ has isolated singularities, the image $p(\mathcal{M})$ of the period map forms an open set \cite{Kotani_26_PTM}.
\end{remark}

Thus far, our main objects of study have been conical symplectic varieties admitting projective crepant resolutions $Y$.
However, a general conical symplectic variety does not necessarily admit a projective crepant resolution; instead, a partial resolution (a $\mathbb{Q}$-factorial terminalization) $Y$ is obtained via the Minimal Model Program (cf.~\cite{BCHM10}).
In light of this background, we pose the following problem:

\begin{problem}
  Let $X$ be an arbitrary conical symplectic variety, and let $Y$ be its partial resolution.
  In this case, formulate the notion of algebraic asymptotic hyperkähler metrics and define the period map $p$.
  Furthermore, extend the Torelli-type theorem to this setting.
\end{problem}

\begin{remark}
  Since the universal Poisson deformation of $Y$ exists even for partial resolutions (cf.~\cite{Namikawa11}), one can consider the principal twistor model as in \cite{Kotani_26_PTM}. It is expected that at least the injectivity of the period map can be proved using the PTM.
\end{remark}

%% file: source/Acknowledgments.tex
\section*{Acknowledgments}
This work was supported by JST SPRING, Japan Grant Number JPMJSP2180.

%% file: paper2_ver.3.bbl
\begin{thebibliography}{10}

\bibitem{Bielawski-Dancer00}
R.~Bielawski and A.~S. Dancer.
\newblock The geometry and topology of toric hyperk\"{a}hler manifolds.
\newblock {\em\JournalTitle{Commun. Anal. Geom}}, 8(4):727--759, 2000.

\bibitem{BCHM10}
C.~Birkar, P.~Cascini, C.~D. Hacon, and J.~McKernan.
\newblock Existence of minimal models for varieties of log general type.
\newblock {\em\JournalTitle{J. Amer. Math. Soc}}, 23(2):405--468, 2010.

\bibitem{CrawleyBoevey01}
W.~Crawley-Boevey.
\newblock Geometry of the moment map for representations of quivers.
\newblock {\em\JournalTitle{Compos. Math.}}, 126(3):257--293, 2001.

\bibitem{Görtz-Wedhorn10}
U.~G{\"o}rtz and T.~Wedhorn.
\newblock {\em Algebraic Geometry: Part I: Schemes. With Examples and Exercises}.
\newblock Advanced Lectures in Mathematics. Vieweg+Teubner Verlag, 2010.

\bibitem{HKLR}
N.~J. Hitchin, A.~Karlhede, U.~Lindstr\"{o}m, and M.~Ro\v{c}ek.
\newblock Hyperk\"{a}hler metrics and supersymmetry.
\newblock {\em\JournalTitle{Comm. Math. Phys.}}, 108:535--589, 1987.

\bibitem{Kaledin_06}
D.~Kaledin.
\newblock Geometry and topology of symplectic resolutions.
\newblock {\em\JournalTitle{Proc. Sympos. Pure Math.}}, 80, 09 2006.

\bibitem{King94}
A.~D. King.
\newblock Moduli of representations of finite dimensional algebras.
\newblock {\em\JournalTitle{Q. J. Math.}}, 45(4):515--530, 1994.

\bibitem{kirillov16quiver}
J.~Kirillov, Alexander.
\newblock {\em Quiver Representations and Quiver Varieties}, volume 174 of {\em Graduate Studies in Mathematics}.
\newblock American Mathematical Society, 2016.

\bibitem{Konno00}
H.~Konno.
\newblock Cohomology rings of toric hyperk\"ahler manifolds.
\newblock {\em\JournalTitle{Int. J. Math.}}, 11(08):1001--1026, 2000.

\bibitem{Kotani_26_PTM}
R.~Kotani.
\newblock Principal twistor models and asymptotic hyperk\"ahler metrics.
\newblock {\em\JournalTitle{arXiv preprint arXiv:2603.03923}}, 2026.

\bibitem{Kronheimer89_HK_quot}
P.~B. Kronheimer.
\newblock The construction of {ALE} spaces as hyper-{K}\"ahler quotients.
\newblock {\em\JournalTitle{J. Differential Geom.}}, 29(3):665--685, 1989.

\bibitem{Kronheimer89_Torelli-type}
P.~B. Kronheimer.
\newblock A {T}orelli-type theorem for gravitational instantons.
\newblock {\em\JournalTitle{J. Differential Geom.}}, 29(3):685--697, 1989.

\bibitem{Losev12}
I.~Losev.
\newblock Isomorphisms of quantizations via quantization of resolutions.
\newblock {\em\JournalTitle{Adv. Math.}}, 231(3--4):1216--1270, 2012.

\bibitem{McGerty-Nevins18}
K.~McGerty and T.~Nevins.
\newblock {K}irwan surjectivity for quiver varieties.
\newblock {\em\JournalTitle{Invent. math.}}, 212:161--187, 04 2018.

\bibitem{Nagaoka21}
T.~Nagaoka.
\newblock The universal {P}oisson deformation of hypertoric varieties and some classification results.
\newblock {\em\JournalTitle{Pacific J. Math.}}, 313(2):459--508, 2021.

\bibitem{Nakajima94}
H.~Nakajima.
\newblock Instantons on {ALE} spaces, quiver varieties, and {K}ac-{M}oody algebras.
\newblock {\em\JournalTitle{Duke Math. J.}}, 76(2):365--416, 1994.

\bibitem{Namikawa11}
Y.~Namikawa.
\newblock Poisson deformations of affine varieties.
\newblock {\em\JournalTitle{Duke Math. J.}}, 156:51--85, 2011.

\end{thebibliography}
